\documentclass{amsart}
\usepackage{amsmath}
\usepackage{amscd}
\usepackage{amsxtra}
\usepackage{amssymb}
\usepackage[inline]{enumitem}
\usepackage{verbatim}
\usepackage[mathscr]{eucal}	

\DeclareSymbolFont{pssymbols}     {OMS}{ztmcm}{m}{n}
\DeclareSymbolFontAlphabet{\mathpsscr}   {pssymbols}

\newcommand{\mycomment}[1]{%
}%

\theoremstyle{plain}
\newtheorem{thm}[subsection]{Theorem}
\newtheorem{cor}[subsection]{Corollary}
\newtheorem{prop}[subsection]{Proposition}
\newtheorem{lem}[subsection]{Lemma}

\newtheorem*{thm*}{Theorem}
\newtheorem*{cor*}{Corollary}
\newtheorem*{prop*}{Proposition}
\newtheorem*{lem*}{Lemma}

\theoremstyle{definition}
\newtheorem{defn}[subsection]{Definition}

\theoremstyle{remark}
\newtheorem{rem}[subsection]{Remark}

\newtheorem*{rem*}{Remark}
\newtheorem*{rems*}{Remarks}

\newtheorem*{note*}{Note}

\setlist[enumerate,1]{label=\textup{(\roman*)}}
\setlist[enumerate]{font=\normalfont}
\newlength{\myleftmargin}
\setlist[enumerate]{leftmargin=\myleftmargin}

\numberwithin{equation}{section}

\usepackage[arrow,matrix,tips,frame,curve,ps]{xy}
\SelectTips{cm}{}
\UseTips
\CompileMatrices

\newdir^{ (}{{}*!/-5pt/@^{(}}
\newdir_{ (}{{}*!/-5pt/@_{(}}

\newcommand{\CC}{{\mathbb C}}
\newcommand{\RR}{{\mathbb R}}
\newcommand{\ZZ}{{\mathbb Z}}

\newcommand{\QQ}{{\mathbb Q}}
\newcommand{\EE}{{\mathbb E}}

\newcommand{\VV}{{\mathbb V}}

\DeclareMathOperator{\codim}{codim}

\DeclareMathOperator{\Int}{int}
\DeclareMathOperator{\Ker}{Ker}
\renewcommand{\ker}{\Ker}
\renewcommand{\Im}{\operatorname{Im}}

\DeclareMathOperator{\cone}{M}
\DeclareMathOperator{\mS}{SS}
\DeclareMathOperator{\wmS}{SS_w}
\DeclareMathOperator{\muwmS}{SS_{w,\mu}}
\DeclareMathOperator{\nuwmS}{SS_{w,\nu}}
\DeclareMathOperator{\etawmS}{SS_{w,\eta}}

\DeclareMathOperator{\type}{Type}

\renewcommand{\l}{\ell}

\newcommand{\rvvv}[1][]{\rVert_{\if!#1!b\else#1,b\fi}}	

\newcommand{\lsp}[1]{{}^{#1}\!}

\newbox\arrowbox
\setbox\arrowbox=\hbox{\lower .5ex\rlap{$\longrightarrow$}%
 \raise .5ex\hbox{$\longleftarrow$}}

\newcommand{\str}{\textup{str}}
\newcommand{\LprecLeft}{\mathrel{\leqslant_\mu}}
\newcommand{\LprecRight}{\mathrel{\leqslant}_\nu}
\newcommand{\LprecEta}{\mathrel{\leqslant}_\eta}

\newcommand{\G}{\Gamma}

\DeclareMathOperator{\GL}{GL}

\newcommand{\Pl}{\mathscr P}	

\newcommand{\Xhat}{\widehat{X}}

\newcommand{\X}{\mathcal X}
\newcommand{\Char}{X}

\renewcommand{\i}{i}		
\newcommand{\ihat}{{\hat{\imath}}}

\newcommand{\jhat}{{\hat{\jmath}}}
\DeclareMathOperator{\id}{id}
\newcommand{\LeftNaturalMorphism}{\kappa}
\newcommand{\RightNaturalMorphism}{\sigma}
\newcommand{\LeftMappingConeMorphism}{\alpha}
\newcommand{\RightMappingConeMorphism}{\beta}

\newcommand{\C}{{\mathcal C}}
\newcommand{\I}{\mathcal I}

\let\Sh\Sheaf

\newcommand{\WnC}{\mathcal W^\eta \mathcal C}
\newcommand{\W}{{\mathcal W}}

\renewcommand{\L}{\mathscr L}
\newcommand{\M}{\mathcal M}
\newcommand{\N}{\mathcal N}
\renewcommand{\P}{\mathcal P}

\newcommand{\sa}{{\mathfrak a}}

\newcommand{\n}{{\mathfrak n}}

\newcommand{\al}{\alpha}

\renewcommand{\b}{\beta}
\newcommand{\hsr}{\rho}

\newcommand{\D}{\Delta}
\newcommand{\Dhat}{{\widehat{\D}}}

\renewcommand{\t}{\tau}
\newcommand{\U}{\varUpsilon}

\makeatletter
\def\sphat{^{\mathchoice{}{}%
 {\,\,\smash[b]{\hbox{\lower4\ex@\hbox{$\m@th\widehat{\null}$}}}}%
 {\,\smash[b]{\hbox{\lower3\ex@\hbox{$\m@th\hat{\null}$}}}}}\,}
\makeatother

\newcommand{\Mor}{\operatorname{Mor}}

\newcommand{\IrrRep}{\operatorname{\mathfrak I\mathfrak r\mathfrak r}}

\let\Der\Derived

\newcommand{\Complex}{\operatorname{\mathbf C}}
\newcommand{\K}{\operatorname{\mathbf K}}

\newcommand{\Mod}{\operatorname{\mathbf M \mathbf o \mathbf d}}
\newcommand{\Graded}{\operatorname{\mathbf G\mathbf r}}

\makeatletter
\def\prime{{\null\prime@\null}}
\mathchardef\prime@="0230
\makeatother

\begin{document}


\author{Leslie Saper}
\address{Department of Mathematics\\ Duke University\\ Box 90320\\ Durham,
NC 27708\\U.S.A.}
\email{saper@math.duke.edu}
\urladdr{http://www.math.duke.edu/faculty/saper}
\title{$\mathscr L$-modules are mixed}
\begin{abstract}
  Let $X$ be the locally symmetric space associated to a reductive $\QQ$-group $G$ and an arithmetic subgroup $\Gamma$.  An $\L$-module $\M$ is a combinatorial model of a constructible complex of sheaves on $\Xhat$, the reductive Borel-Serre compactification of $X$ whose strata $X_P$ are indexed by $\G$-conjugacy classes of parabolic $\QQ$-subgroups $P$ of $G$.  Important cohomology theories on $\Xhat$ such as ordinary cohomology, weighted cohomology, and intersection cohomology can be realized as the cohomology of $\L$-modules.  We show that any $\L$-module $\M$ is ``mixed'' in the sense it is an iterated mapping cone of maps to or from weighted cohomology $\L$-modules $\ihat_{P*}\WnC(V)[-d]$ on strata $X_P$ of $\Xhat$; here $\eta$ is a middle weight profile and $V$ is an irreducible regular $L_P$-module.  These weighted cohomology ``building blocks'' are indexed (up to multiplicity) by $V\in\wmS(\M)$, the weak micro-support which is a computable local invariant of $\M$.  As an application we prove that the intersection cohomology of $\Xhat$ is isomorphic to the weighted cohomology of $\Xhat$, at least excluding $\QQ$-types $D$, $E$, and $F$.
\end{abstract}
\maketitle



\setcounter{tocdepth}{1}
\tableofcontents


\section{Introduction}
\label{sectIntro}

\subsection{The setting}
Let $X$ be the locally symmetric space associated to a reductive group $G$ over $\QQ$ and an arithmetic group $\Gamma$.  We will work with the reductive Borel-Serre compactification $\Xhat$ of $X$ \cite{refnZuckerWarped,refnGoreskyHarderMacPherson}.  Its strata $X_P$ are indexed by the $\Gamma$-conjugacy classes of parabolic $\QQ$-subgroups $P$ of $G$.%
\footnote{We use $P$ to denote both a parabolic $\QQ$-subgroup and its $\Gamma$-conjugacy class.}
For such a $P$ let $N_P$ be its unipotent radical and let $L_P=P/N_P$  be its Levi quotient.  The stratum $X_P$ is the locally symmetric space associated to $L_P$ and  its induced arithmetic subgroup $\G_{L_P}$.  The closure $\Xhat_Q$ of a stratum $X_Q$ contains all strata $X_P$ for which $P\subseteq Q$.
Note that $\Xhat$ may have odd codimension strata even if $X$ is Hermitian (which we do not assume).

Many important cohomology theories can be realized as the hypercohomology of a constructible complex of sheaves on $\Xhat$.  For example, there is the ordinary cohomology $H(X;\EE)$, the intersection cohomology $I_pH(\Xhat;\EE)$ \cite{refnGoreskyMacPhersonIHTwo}, and the weighted cohomology $W^\eta H(\Xhat;\EE)$ \cite{refnGoreskyHarderMacPherson}.  (Here $\EE$ is the local coefficient system on $X$ associated to a regular representation $G\to\GL(E)$.)

\subsection{$\L$-modules}
An $\L$-module \cite{refnSaperLModules,refnSaperIHP,refnSaperCDMreprint} is a combinatorial analogue of a constructible complex of sheaves on $\Xhat$.  Specifically let $W\subseteq \Xhat$ be a locally closed union of strata with a unique maximal stratum $X_S$ and let $\Pl(W)$ be the set of $\Gamma$-conjugacy classes of parabolic $\QQ$-subgroups indexing the strata of $W$.  An $\L$-module $\M$ on $W$ is given by the data of a graded regular $L_P$-module $E_P$ for every $P\in \Pl(W)$ and  morphisms $f_{PQ}\colon H(\n_P^Q;E_Q)\to E_P[1]$ for every $P\le Q$; this data must satisfy certain differential-like conditions.  For any $\L$-module $\M$ there is a realization $\Sh(\M)$ as a constructible complex of sheaves on $\Xhat$.   The global cohomology $H(\Xhat;\M)$ of an $\L$-module $\M$ is defined to be the hypercohomology of $\Sh(\M)$.

Each of the cohomology theories mentioned above can be realized by $\L$-modules.  Namely the cohomology of the $\L$-module $i_{G*}E$ is $H(X;\EE)$ \cite[\S11]{refnSaperIHP}, there is an $\L$-module $\WnC(E)$ whose cohomology is $W^\eta H(\Xhat;\EE)$ \cite[\S6]{refnSaperLModules}, and there is an $\L$-module  $\I_p\C(E)$ whose cohomology is intersection cohomology $I_pH(\Xhat;\EE)$ \cite[\S5]{refnSaperLModules}.

We note two advantages in working with $\L$-modules as opposed to complexes of sheaves.  First, many of the usual functors on the derived category of sheaves can be directly realized on $\L$-modules.  For example, if $\M$ is an $\L$-module on $W$ and $i_{P}\colon X_P\hookrightarrow W$ is the inclusion of a stratum then the $\L$-module $i_P^!\M$ on $X_P$ is the complex $(E_P,f_{PP})$.  And if $k\colon W\hookrightarrow W'$ is an inclusion then the $\L$-module $k_*\M$ is defined by extending the data of $\M$ by $0$ for $P\in\Pl(W')\setminus \Pl(W)$.  A second advantage is that both the local cohomology $H(i_P^*\M)$ and the local cohomology with supports $H(i_P^!\M)$ are endowed with the structure of a representation of $L_P$, not just of $\Gamma_{L_P}$.

\subsection{Micro-support}
The most significant advantage of an $\L$-module $\M$ is that one can define its \emph{micro-support} $\mS(\M)$, a particular set of irreducible regular representations $V$ of the various $L_P$.
To describe it, decompose $L_P = M_P\cdot S_P$ where $S_P$ is the maximal $\QQ$-split torus in the center of $L_P$.
Then $V\in\mS(\M)$ if (a) $V|_{M_P}$ is conjugate self-contragradient and (b) there is some $Q\supseteq P$ such that the local cohomology of $\M$ on $X_P$ supported on $X_Q$ is nonzero and the dominant cone of $S_P/S_Q$ acts nonpositively on $V\otimes\hsr_P$.  We denote this local cohomology $\type_{Q,V}(\M)$.

An important property of $\mS(\M)$ is that it controls the global cohomology of  $\M$ in the sense that $H(\Xhat;\M)=0$ if $\mS(\M)=\emptyset$.  We will see a more subtle one below.

\subsection{Building blocks}
Let $\eta=\mu$ or $\nu$, the two middle weight profiles for weighted cohomology.  We define a variant of micro-support, $\mS_\eta(\M)$, by making a special choice above for $Q$ depending on $\eta$ (see \S\ref{ssectMicroSupport}) and likewise define $\type_{\eta,V}(\M)$.  The set $\mS_\eta(\M)$ again controls the global cohomology in the sense above and, in addition, one finds that the weighted cohomology $\L$-module $\ihat_{P*}\WnC(V)$ has $\mS_\eta(\ihat_{P*}\WnC(V))=\{V\}$ provided $V|_{M_P}$ is conjugate self-contragradient.  This suggests, at least from the point of view of cohomology, that the $\L$-modules $\ihat_{P*}\WnC(V)$ for all $P\in\Pl(W)$ and all $L_P$-modules $V\in\mS_\eta(\M)$ could serve as ``building blocks'' to understand a general $\L$-module $\M$.

To implement this one might try to find for an $L_P$-module $V\in\mS_\eta(\M)$ a morphism $\phi\colon\ihat_{P!}\WnC(V)[-d]\to \M$ or $\psi\colon \M \to \ihat_{P*}\WnC(V)[-d]$ that yield a nonzero map on $\type_{\eta,V}$. Then in the homotopy category the mapping cone $\widetilde \M$ would have smaller $\type_{\eta,V}$ and we could repeat the process with some $\widetilde V\in\mS_\eta(\widetilde\M)$.  Actually to make this work one needs to also consider $V$ which fail the conjugate self-contragradient condition.  Thus we work with the potentially larger \emph{weak $\eta$-micro-support} $\etawmS(\M)$ in which that condition is dropped.  Note that the additional building blocks we now consider, $\ihat_{P*}\WnC(V)$ for $V\in \etawmS(\M) \setminus \mS_\eta(\M)$, have zero global cohomology since $\mS_\eta(\ihat_{P*}\WnC(V))=\emptyset$ when $V|_{M_P}$ is not conjugate self-contragradient.

\subsection{A partial order}
For this process to succeed it is essential to deal with the elements of $\etawmS(\M)$ in the correct order.  In \S\ref{sectPartialOrder} we define a partial order $\LprecEta$ on the elements of $\etawmS(\M)$.  In the case $\eta=\mu$, if $V$ and $\widetilde V$ are irreducible $L_P$- and $L_{\widetilde P}$-modules respectively and if $P\le \widetilde P$ then $V \LprecLeft \tilde V$ if $(\xi_V+\hsr_P)|_{\sa_P^{\widetilde P}} \in  -\lsp+\sa_P^{ \widetilde P*}$ (the negative root cone)  while $\tilde V \LprecLeft V$ if $(\xi_V+\hsr_P)|_{\sa_P^{\widetilde P}} \in  \Int \lsp+\sa_P^{ \widetilde P*}$.  One must then compose such relations to obtain a transitive relation.  The case $\eta=\nu$ has the interior condition moved to the negative cone.

\subsection{Bounded $\L$-modules are mixed}
The main result of this paper, Theorem ~\ref{thmBoundedLModulesAreMixed}, is that any bounded $\L$-module $\M$ can be realized in the homotopy category as an iterated mapping cone of morphisms with weighted cohomology.  Specifically when $\eta=\mu$ there is a sequence of $\L$-modules $\M=\M_N,\M_{N-1},\dots,\M_0=0$ such that $\M_{i-1}$ is the mapping cone of a morphism $\phi_i\colon \ihat_{P*}\W^\mu\C(V_i)[-d_i]\to \M_i$; the $V_i$ are nondecreasing with respect to $\LprecLeft$.  For $\eta=\nu$ the same holds but with morphisms $\psi_i\colon \M_i[1]\to \ihat_{P*}\W^\nu\C(V_i)[-d_i+1]$ and the $V_i$ nonincreasing with respect to $\LprecRight$.  We refer to this property of a bounded $\L$-module $\M$ as being \emph{$\eta$-mixed}.

\subsection{An application}
As an initial application of this result we prove in Theorem~ \ref{thmIsomorphismIntersectionAndWeightedCohomology} that if $E$ is conjugate self-contragredient then $W^\eta H(\Xhat;\EE)\cong I_pH(\Xhat;\EE)$ provided the $\QQ$-root system of $G$ does not involve types $D$, $E$, and $F$.%
\footnote{The assumption on the $\QQ$-root system is removable if the calculation of the micro-support of intersection cohomology in \cite[\S17]{refnSaperLModules} can be generalized as is expected.}
Here $\eta$ is a middle weight profile and $p$ is the corresponding middle perversity.
In the special case when $X$ is Hermitian (and in some equal-rank settings) such global isomorphisms could be obtained by combining the main results of \cite{refnGoreskyHarderMacPherson} and of \cite{refnSaperLModules}.  That proof involves showing that  the sheaves had isomorphic local cohomology after taking the push forward to the Baily-Borel Satake compactification $X^*$.  The current proof on the other hand does not involve an isomorphism of local cohomology on any space.  It is an interesting problem to determine a compactification of $X$ (aside from the one point compactification) such that the push forward of the sheaves have isomorphic local cohomology.

\subsection{Future plans}
In future work we plan to use micro-support and weighted cohomology ``building blocks'' in order to describe the ordinary cohomology $H(X;\EE)$.  The isomorphism between weighted cohomology and intersection cohomology noted above indicates such a description would be topological in nature.   On the other hand we also plan to show that $\Im (W^\mu H(\Xhat;\EE) \to W^\nu H(\Xhat;\EE))$ is isomorphic to the space of $L^2$-harmonic $\EE$-valued forms on $X$.  Thus this work should be related to the description of ordinary cohomology via cusp forms, Eisenstein series, and residues of Eisenstein series as in the work of Langlands \cite{refnLanglandsFE}, Harder \cite{refnHarderQRankOne,refnHarderGLtwo}, Schwermer \cite{refnSchwermerKohomologie,refnSchwermerGeneric}, Franke \cite{refnFranke}, Franke and Schwermer \cite{refnFrankeSchwermer}, Li and Schwermer \cite{refnLiSchwermer}, and many others.

\subsection{Acknowledgments}
The ideas here have been percolating for some time.  I would like to Mark Goresky, Michael Harris, G\"unther Harder, Mike Lipnowski, Eduard Looijenga, Amnon Neeman, Bill Pardon, Birgit Speh, and Steve Zucker for helpful and stimulating conversations.  I would also like to thank the \textit{\'Equipes Formes Automorphes} at the \textit{Institut de Mathématiques de Jussieu} for their hospitality while some of this work was performed.

\section{Notation}

\subsection{}
\label{ssectNotationReductiveGroup}
For any algebraic group $G$ defined over $\RR$ we denote the Lie algebra of its real points by the corresponding Fraktur letter, $\mathfrak g=\operatorname{Lie} G(\RR)$.  If $G$ is defined over $\QQ$ we let $\Char(G)$ denote the characters of $G$ defined over $\QQ$.  For $\psi\in\Char(G)$ we also let $\psi$ denote the induced element of $\mathfrak g^*$.

Let $G$ be a connected reductive $\QQ$-group and $\G$ an arithmetic subgroup. Let $S_G$ be the maximal $\QQ$-split torus in the center of $G$ and let $A_G=S_G(\RR)^0$.  There is an almost direct product decomposition $G=M_G\cdot S_G$ where $M_G=\bigcap_{\chi\in\Char(G)}\ker \chi^2$.  Let $K \subseteq G(\RR)$ be a maximal compact subgroup. The associated locally symmetric space is $X = \G\backslash G(\RR)/KA_G$.

\subsection{}
Let $\Pl$ denote the finite set of $\G$-conjugacy classes of parabolic $\QQ$-subgroups of $G$; we do not distingish notationally a parabolic subgroup from its conjugacy class.  Inclusion of $\G$-conjugacy classes induces a partial order on $\Pl$ which we denote $\le$.  For example, $P<Q$ means $P\subsetneq Q^\gamma$ for some $\gamma \in \G$.  For $P\le R\in\Pl$ let $[P,R]\subseteq \Pl$ denote the closed interval consisting of $Q$ such that $P\le Q\le R$.

\subsection{}
If $P$ is a parabolic $\QQ$-subgroup of $G$ with unipotent radical $N_P$, let  $L_P=P/N_P$ be its Levi quotient.  Let $S_P$ be the maximal $\QQ$-split torus in the center of $L_P$; there is an almost direct product decomposition $L_P=M_P \cdot S_P$.  For $P\le R$ let $S_P^R=\bigl(\bigcap_{\lambda\in \Char(L_R)}\ker\lambda\cap S_P\bigr)^0$ and $N_P^R = N_P/N_R$.  We have an almost direct product $S_P=S_R\cdot S_P^R$.  If we set $A_P^R=S_P^R(\RR)^0$ and similarly for $A_P$ we have a direct product $A_P=A_R\times A_P^R$.  This induces $\sa_P^* = \sa_R^*+\sa_P^{R*}$ and for $\lambda\in\sa_P^*$ we write correspondingly $\lambda=\lambda_R + \lambda_P^R$.

For a parabolic $\QQ$-subgroup $P$ we extend the notation of \S\ref{ssectNotationReductiveGroup} to the Levi quotient $L_P$ with its induced arithmetic subgroup $\G_{L_P} = \G/(\G\cap N_P)$.  We obtain the locally symmetric space $X_P = \G_{L_P}\backslash L_P(\RR)/K_PA_P$.

\subsection{}
For any connected reductive $\QQ$-group $L$ we define $\Mod(L)$ to be the category of regular representations of $L$, $\Graded(L)$ the category of graded objects of $\Mod(L)$, and $\Complex(L)$ the category of complexes of objects.  If $C$ is an object of $\Graded(L)$ or $\Complex(L)$ define its shift by $k$ to be $C[k]^i=C^{i+k}$ and in the case of $\Complex(L)$ also multiply the differential by $(-1)^k$.  For any functor from $\Mod(L)$ to an additive category we implicitly extend it to the categories of graded objects or complexes.

A object $(C^i)_{i\in\ZZ}$ in $\Graded(L)$ is called \emph{bounded} if there exists $N\in\ZZ$ such that $C^i=0$ for $|i|>N$; the same definition applies to complexes.  We denote the full subcategories consisting of bounded objects as $\Graded^b(L)$ and $\Complex^b(L)$.%
\footnote{As in \cite[\S\S12.13, 12.16]{refnstacks-project} we view an object of $\Graded(L)$ or  $\Complex(L)$ as a family $(C^i)_{i\in\ZZ}$ of objects of $\Mod(L)$ (together with differential morphisms in the case $\Complex(L)$).  In particular while $C^i$ is finite dimensional for all $i$ (being a regular representation) the direct sum $\bigoplus_i C^i$ may not be finite dimensional.  For $\Graded^b(L)$ or  $\Complex^b(L)$ however $\bigoplus_i C^i$ is a regular represention.
}

Let $\IrrRep(L)$ denote the set of irreducible objects of $\Mod(L)$.  Elements of $\IrrRep(L)$ are usually denoted $V$ but with subscripts or decorations to distinguish different representations.  If $H$ is an object of $\Mod(L)$, $\Graded(L)$, or $\Complex(L)$ and $V\in \IrrRep(L)$ we let $H_V$ be its $V$-isotypic component.

If $L=L_P$ for $P\in\Pl$ we often write $V_P$ for an element of $\IrrRep(L_P)$.  Let $\xi_{V_P}$ denote the character by which $S_P$ acts on $V_P$ as well as the induced element of $\sa_P^*$.  Let $V_{P,\lambda}$ denote the irreducible $L_P$-module with highest weight $\lambda$.  

\subsection{}
If $P_0$ is a minimal parabolic $\QQ$-subgroup of $G$ then $S_{P_0}$ is a maximal $\QQ$-split torus and there is a unique ordering on the $\QQ$-root system of $G$ for which the roots appearing in $\n$ are positive.  Let $\D$ denote the corresponding simple roots.
As usual let $\hsr\in\sa^{G*}$ be $1/2$ the sum of the positive $\QQ$-roots.

For $P\ge P_0$ let $\D^P$ be the simple roots in $L_P$ and let $\D_P$ be the restrictions of $\D\setminus \D^P$ to $S_P$.  If $R \ge P$ let $\D_P^R$ be the subset of elements of $\D_P$ that restrict to $1$ on $S_R$.  Note that $R \longleftrightarrow \D_P^R$ is a one-to-one correspondence between parabolic $\QQ$-subgroups containing $P$ and subsets of $\D_P$.  If $Q,R\ge P$ let $Q\vee R$ be the parabolic $\QQ$-subgroup corresponding to $\D_P^Q\cup\D_P^R$.
Let $(P,R)\ge P$ be the parabolic $\QQ$-subgroup corresponding to  $\D_P\setminus \D_P^R \subseteq \D_P$; it satisfies $(P,R)\cap R = P$ and $(P,R)\vee R=G$.

For $\al\in \D$ let $\al\spcheck\in\sa^G$ be the corresponding coroot.  Let $\Dhat=\{\b_\al\}$ be the basis of $\sa^{G*}$ dual to the coroots.  More generally define the coroot basis $\{\al\spcheck\}_{\al\in\D_P^R}$ of $\sa_P^R$ as in \cite{refnArthurTraceFormula} and let $\Dhat_P^R = \{\b_\al^R\}_{\al\in \D_P^R}$ be the dual basis of $\sa_P^{R*}$.  On the other hand we have $\D_P^R$ as a basis of  $\sa_P^{R*}$ and we let $\{\b\spcheck\}_{\b\in \Dhat_P^R}$ be the dual basis of $\sa_P^R$.

In $\sa_P^{R*}$ we define the \emph{root cone} and the \emph{dominant cone} as
\begin{equation}\label{eqnRootWeightCone}
  \begin{split}
    \lsp{+}\sa_P^{R*}&=\{\,\lambda\in\sa_P^{R*}\mid \langle \lambda,\b\spcheck\rangle\ge0 \text{ for all $\b\in\Dhat_P^R$}\,\} \text{ and}\\
    \sa_P^{R*+}&=\{\,\lambda\in\sa_P^{R*}\mid \langle \lambda,\al\spcheck\rangle\ge0 \text{ for all $\al\in\D_P^R$}\}
  \end{split}
\end{equation}
respectively even though $\D_P^R$ may not be the basis of a root system.

\subsection{}
For $P\le R$ define a partial order on $\sa_P^{R*}$ by setting
\begin{align}
\label{eqnNewGE}
\xi\ge \xi' \quad&\Longleftrightarrow \quad \xi-\xi'\in \lsp{+}\sa_P^{R*}\ ; 
\\
\intertext{if $\xi\ge\xi'$ and $\xi\neq\xi'$ we write $\xi \gneq\xi'$.  Finally define}
\label{eqnNewLT}
\xi > \xi' \quad&\Longleftrightarrow \quad  \xi-\xi'\in \Int\lsp{+}\sa_P^{R*}  
\ .
\end{align}

\subsection{}
Let $W$ be the Weyl group of the $\CC$-root system of $G$; let $\l(w)$ denote the length of $w\in W$.  For a parabolic $\QQ$-subgroup $Q$ of $G$ let $W^Q\subseteq W$ denote the Weyl subgroup for the $\CC$-root system of $L_Q$ and let  $W_Q\subseteq W$ denote the set of minimal length representatives of $W^Q\backslash W$.  If $w\in W$ we write $w=w^Qw_Q$ according to $W=W^QW_Q$.  If $P$ is a parabolic $\QQ$-subgroup of $Q$ let $W_P^Q\subseteq W^Q$ be the set of minimal length representatives of $W^P\backslash W^Q$ so $W^Q=W^PW_P^Q$.

\subsection{The reductive Borel-Serre compactification}
The reductive Borel-Serre compactification \cite{refnGoreskyHarderMacPherson} of $X$ is denoted $\Xhat$; it was first used by Zucker in \cite{refnZuckerWarped}.  Its strata $X_P$ are indexed by $\Pl$.  The closure of $X_P$ in $\Xhat$ is the reductive Borel-Serre comapctifcation of $X_P$ and is denoted $\Xhat_P$; the open star neighborhood of $X_P$ is $U_P=\bigcup_{R\ge P} X_R$.  A union of strata $W\subseteq \Xhat$ is called \emph{admissible} if it is locally closed; let $\Pl(W) \subseteq \Pl$ be the subset indexing the strata of $W$.  Note the locally closed condition is equivalent to  $[P,R]\subseteq \Pl(W)$ for all $P\le R\in\Pl(W)$

For $P\in\Pl(W)$ we will use the following inclusions of admissible subsets of $W$:
\begin{alignat*}{2}
  i_P&\colon X_P \hookrightarrow W\ ,\qquad & j_P &\colon(U_P\setminus X_P)\cap W \hookrightarrow W\ ,\\
  \ihat_P&\colon \Xhat_P\cap W \hookrightarrow W, \qquad & \jhat_P&\colon W\setminus (\Xhat_P\cap W)\hookrightarrow W\ .
\end{alignat*}
Thess maps depend on $W$ but we supress it from the notation.  But note that when these maps (or the functors to be associated to them) are composed $W$ will change at each step.

\section{$\L$-modules and micro-support}
We will briefly define $\L$-modules, their micro-support, and properties of them that we will need.  References are \cite{refnSaperLModules}, \cite{refnSaperIHP}, \cite{refnSaperLtwoLmoduleI}, and the more expository \cite{refnSaperCDMreprint}.

\subsection{Kostant's theorem}
\label{ssectKostantsTheorem}
If $P\le Q\in\Pl$ and $V_{Q,\lambda}\in\IrrRep(L_Q)$ then the Lie algebra cohomology $H(\n_P^Q;V_{Q,\lambda})$ is a representation of $L_P$.  Kostant's theorem \cite{refnKostant} says that
\begin{equation}\label{eqnKostant}
  H(\n_P^Q;V_{Q,\lambda}) = \bigoplus_{w\in W_P^Q}  H(\n_P^Q;V_{Q,\lambda})_w =  \bigoplus_{w\in W_P^Q} V_{P,w(\lambda_Q+\hsr_Q)-\hsr_Q}[-\l(w)] \ .
\end{equation}
The theorem implies \cite{refnGoreskyHarderMacPherson,refnSchwermerGeneric} that if $P\le Q\le R\in\Pl$ and $V_R\in\IrrRep(L_R)$ that
\begin{equation}
H(\n_P^R;V_R) \cong H(\n_P^Q;H(\n_Q^R;V_R))\ .
\end{equation}
To see this one checks that $W_P^R=W_P^Q W_Q^R$ and, if $w= w^Q w_Q\in W_P^R$, that $\l(w)=\l(w^Q)+\l(w_Q)$.

\subsection{Definition of $\L$-modules}
\label{ssectDefinitionLModules}
Let $W\subseteq\Xhat$ be an admissible subset and define $\IrrRep(\L_W) = \coprod_{P\in\Pl(W)} \IrrRep(L_P)$.

An \emph{$\L$-module} $\M = (E_\cdot, f_{\cdot\cdot})$ on $W$ consists of a graded regular $L_P$-module $E_P$ for all $P\in\Pl(W)$ together with morphisms $f_{PQ}\colon H(\n_P^Q;E_Q)\to E_P[1]$ for all $P\le Q\in\Pl(W)$ such that
  \[
  \sum_{Q\in [P,R]} f_{PQ}[1]\circ H(\n_P^Q;f_{QR}) = 0
  \]
  for $P\le R\in\Pl(W)$.

  A morphism $\phi\colon \M \to \N$ between $\L$-modules $\M$ and $\N=(F_\cdot,g_{\cdot\cdot})$ consists of $L_P$-morphisms $\phi_{PQ}\colon H(\n_P^Q;E_Q) \to F_P$ for all $P\le Q\in\Pl(W)$ which satisfy
  \[
  \sum_{Q\in [P,R]} g_{PQ}\circ H(\n_P^Q;\phi_{QR}) =
    \sum_{Q\in [P,R]} \phi_{PQ}[1]\circ H(\n_P^Q;f_{QR})\ .
  \]
  
  The category of $\L$-modules $\Mod(\L_W)$ is an additive category.  The shift by $k$ of an $\L$-module $\M$ is defined by $\M[k]=(E_\cdot[k],(-1)^kf_{\cdot\cdot})$.  If $\N= (F_\cdot,g_{\cdot\cdot})$ is another $\L$-module the direct sum is $(E_\cdot\oplus F_\cdot,f_{\cdot\cdot}\oplus g_{\cdot\cdot})$.  The full subcategory of \emph{bounded $\L$-modules} $\Mod^b(\L_W)$ is defined by requiring $E_P$ to be an object of $\Graded^b(L_P)$ for all $P\in\Pl(W)$.

Note that if $W=X_P$ then $\Mod(\L)$ is the category of complexes of regular $L_P$-modules.  In general, though, $\Mod(\L)$ is not defined as a category of complexes.

\subsection{Functors}
\label{ssectFunctors}
Let $k\colon Z \hookrightarrow W$ be an inclusion of admissible subsets.  Define the functor $k^!\colon \Mod(\L_W)\to \Mod(\L_Z)$ by restricting the data of $\M\in \Mod(\L_W)$ to  $P,Q\in\Pl(Z)$.  When $Z$ is open in $W$ define $k^*=k^!$.  When $Z$ has a unique maximal stratum $k^*$ is defined in  \cite[\S3.4]{refnSaperLModules}; the only case we will use here is
\begin{equation*}
\i_P^*\M = \biggl( \bigoplus_{P\le R} H(\n_P^R;E_R), 
\sum_{P\le R\le S} H(\n_P^R;f_{RS}) \biggr)\ ,
\end{equation*}
the complex computing local cohomology at $P$.  Define the functor $k_*\colon  \Mod(\L_Z)\to \Mod(\L_W)$ by extending the data of an $\L$-module on $Z$ by $0$ if any index is not in $\Pl(Z)$.  We will only use $k_!$ here when $Z$ is closed in $W$ in which case $k_!=k_*$.

If $i$ (resp.~ $j$) is the inclusion of a closed (resp.~ an open) admissible subset of $W$ then  $i^*$ is a left adjoint to $i_*=i_!$ and $j^!$ is a right adjoint to $j_!$.  This will be used in \S\ref{sectMorphismsWithWeightedCohomology}.

For $P\in\Pl(W)$ the $\L$-module $i_P^!\M=(E_P,f_{PP})$ is the complex computing the local cohomology supported on $X_P$.  More generally, for $P\le Q\in\Pl(W)$, the $\L$-module $i_{P*}\ihat_Q^!\M$ is the complex computing the local cohomology along $X_P$ supported on $\Xhat_Q$.  This will be used to define micro-support in \S\ref{ssectMicroSupport}.

If $\M\in\Mod(\L_W)$ and $P\le Q\le Q'\in \P(W)$ there is a long exact sequence relating the local cohomology along $X_P$ supported on $\Xhat_Q$ and on $\Xhat_{Q'}$ \cite[(3.6.4)]{refnSaperLModules}:
\begin{equation}\label{eqnCompareLocalCohomology}
  \dots \to H^{i}(i_P^*\ihat_Q^!\M) \to  H^{i}(i_P^*\ihat_{Q^{\prime}}^!\M) \to H^{i}(i_P^*\jhat_{Q*}\jhat_Q^*\ihat_{Q^{\prime}}^!\M) \to \dots\ .
\end{equation}
Set $P'=(P,Q)\cap Q'$.  There are two spectral sequences abutting to the third term \cite[Lemma 3.7]{refnSaperLModules}: the \emph{Fary spectral sequence} with
\begin{equation}\label{eqnFarySpectralSequence}
  E_1 = \bigoplus_{P< \widetilde P\le P'} H(\n_P^{\widetilde P}; H(i_{\widetilde P}^* \ihat_{\widetilde P\vee Q}^!\M))\ ,
\end{equation}
and the \emph{Mayer-Vietoris spectral sequence} with
\begin{equation}\label{eqnMayerVietorisSpectralSequence}
  E_1 = \bigoplus_{P< \widetilde P\le P'} H(\n_P^{\widetilde P}; H(i_{\widetilde P}^* \ihat_{Q'}^!\M))[-\#\D_P^{\widetilde P}+1]\ .
\end{equation}

Finally suppose $W$ has a unique maximal stratum $X_S$.  If $\M\in\Mod(\L_W)$ and $R\le Q\in\P(W)$ there are natural morphisms
\begin{equation}\label{eqnNaturalMorphisms}
  i_R^!\M = i_R^*\ihat_R^!\M \overset{\LeftNaturalMorphism}\longrightarrow i_R^*\ihat^!_Q\M \overset{\RightNaturalMorphism}\longrightarrow i_R^*\ihat_S^!\M =   i_R^*\M
\end{equation}
which will play an important role in \S\ref{sectEliminatingMicroSupport}.

\subsection{Micro-support}\label{ssectMicroSupport}
Assume $W$ has a unique maximal stratum $X_S$.  If $V\in\IrrRep(L_W)$ is an $L_P$-module, define $P\le  Q_V^W \le Q_V^{\prime W}$ by
\begin{align*}
\D_P^{Q_V^W} &= \{\,\al\in\D_P^S\mid\langle \xi_V+\hsr,\al\spcheck\rangle<0\,\}\text{ and }\\
\D_P^{Q_V^{\prime W}} &= \{\,\al\in\D_P^S\mid\langle \xi_V+\hsr,\al\spcheck\rangle\le 0\,\}\ .
\end{align*}
(Sometimes it is convenient to write $Q_V^S$ instead of $Q_V^W$ and when $S=G$ we omit it.)  For $Q \in [Q_V^W, Q_V^{\prime W}]$ we define the \emph{$Q$-type} of $\M$ to be the cohomology
\begin{equation}
  \label{eqnType}
  \type_{Q,V}(\M) = H(i_P^*\ihat_Q^!\M)_V\ .
\end{equation}
When $Q=Q_V^{\prime W}$ or $Q_V^W$ we use the shorthand $\type_{\mu,V}(\M)$ and $\type_{\nu,V}(\M)$ respectively; these labels will be justified later in Corollary ~\ref{corEtamSofWetaC}.

The \emph{weak micro-support} of an $\L$-module $\M$ on $W$ is defined to be
\begin{equation*}
  \wmS(\M) = \{\, V\in \IrrRep(\L_W) \mid \type_{Q,V}(\M)\neq0
    \text{ for some } Q\in[Q_V^W,Q_V^{\prime W}]\,\}\ .
\end{equation*}
We similarly define $\muwmS(\M)$ and $\nuwmS(\M)$ by using $\type_{\mu,V}(\M)$ and $\type_{\nu,V}(\M)$ respectively.

Define the \emph{\textup(strong\textup) micro-support} $\mS(\M)$ as the subset of  $\wmS(\M)$ whose the elements $V\in\IrrRep(\L_W)\cap\Mod(L_P)$ satisfy the additional conjugate self-contragradient condition $(V|_{M_P})^* \cong \overline{V|_{M_P}}$.  Similarly define $\mS_\mu(\M)$ and $\mS_\nu(\M)$.

\subsection{Vanishing theorem}
By Theorem~4.1 of \cite{refnSaperLModules} there is a functor $\Sh_W$ from $\Mod(\L_W)$ to $\Der_\X(W)$, the derived category of constructible sheaves on $W$.  One incarnation of $\Sh_W(\M)$ is the direct sum over $P\in\Pl(W)$ of sheaves of special differential forms \cite[\S13]{refnGoreskyHarderMacPherson} on $\Xhat_P$ with coefficient system $\EE_P$; the differential arises from  exterior differentiation, restriction to boundary strata, and the $f_{PQ}$.  This functor commutes with the functors on $\L$-modules defined in \S\ref{ssectDefinitionLModules}.   The \emph{cohomology $H(W;\M)$ of an $\L$-module $\M\in\Mod(\L_W)$} is defined to be the hypercohomology $H(W;\Sh_W(\M))$.   We have the following vanishing theorem

\begin{thm}[{\cite[\S\S10.4,10.6]{refnSaperLModules}}]
  \label{thmVanishingTheorem}
  If $\mS(M)$, $\mS_\mu(\M)$, or $\mS_\nu(\M)$ are empty then  $H(W;\Sh_W(\M))=0$.%
  \footnote{Actually the theorem states the cohomology vanishes for degrees $i\notin [c(\M),d(\M)]$.  This interval is the smallest containing all sums $j+k$, where $\type_{Q,V}^j(\M)\neq0$ and $H^k_{(2)}(X_P;\VV)\neq0$ for any $V\in\mS(\M)\cap\IrrRep(L_P)$ and $Q\in[Q_V^W,Q_V^{\prime W}]$.  We will not need this more detailed information however.}
\end{thm}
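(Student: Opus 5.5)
The plan is to reduce everything to the $\eta$-versions and then argue by induction over the strata of $W$, using a local-to-global spectral sequence.

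\textbf{Reduction.} By definition $\mS_\mu(\M)$ and $\mS_\nu(\M)$ are the subsets of $\mS(\M)$ obtained by fixing the single choice $Q=Q_V^{\prime W}$, respectively $Q=Q_V^W$. So if $\mS(\M)=\emptyset$ then both are empty, and it suffices to prove: $\mS_\eta(\M)=\emptyset$ implies $H(W;\Sh_W(\M))=0$ for $\eta=\mu,\nu$. The two cases are dual; the interior and boundary of the cone are interchanged. Since $\Sh_W$ commutes with the functors of \S\ref{ssectFunctors}, I will work with $\L$-modules throughout.

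\textbf{Dévissage over strata.} I would order $\Pl(W)$ by a linear extension of $\le$ and peel off a minimal stratum $X_P$ each time. This gives distinguished triangles of the form $i_{P*}i_P^!\M\to\M\to j_*j^*\M$, or their $\ihat_Q^!$-refined analogues, and hence a spectral sequence. Its $E_1$-terms are hypercohomologies over $X_P$ of the local complexes $i_P^*\ihat_{Q}^!\M$, where $Q$ is allowed to vary with the $L_P$-isotypic component.

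The key point is to choose, isotypic component by isotypic component, $Q=Q_V^{\prime W}$ (for $\mu$) or $Q=Q_V^W$ (for $\nu$). I would justify this choice using the long exact sequence \eqref{eqnCompareLocalCohomology} together with the Fary and Mayer–Vietoris spectral sequences \eqref{eqnFarySpectralSequence}–\eqref{eqnMayerVietorisSpectralSequence}. These allow the local cohomology supported on $\Xhat_Q$ to be moved between different $Q$ without changing the global answer. After this, the $E_1$-contribution of $X_P$ becomes a sum over $V\in\IrrRep(L_P)$ of $\type_{\eta,V}(\M)$ tensored with a weighted/$L^2$-type cohomology of $X_P$ with coefficients in $\VV$.

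\textbf{Local vanishing.} Each such term vanishes in one of two ways:
\begin{enumerate}
\item If $\type_{\eta,V}(\M)=0$, the term is zero trivially.
\item If $\type_{\eta,V}(\M)\ne0$ but $V|_{M_P}$ is not conjugate self-contragredient, the weight condition built into $Q_V^{W}$, $Q_V^{\prime W}$ (nonpositivity of $\xi_V+\hsr$ on the relevant part of the dominant cone) makes the relevant cohomology of $X_P$ equal to the $L^2$ cohomology $H_{(2)}(X_P;\VV)$. That group vanishes by the Borel–Casselman/Zucker criterion, because $\VV$ then carries no $\Gamma_{L_P}$-invariant compatible metric.
\end{enumerate}
Hence, if $\mS_\eta(\M)=\emptyset$, every $E_1$-term vanishes and so $H(W;\M)=0$.

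\textbf{Main obstacle.} The main difficulty is the middle step: showing that the global cohomology really decomposes into contributions indexed by $V$ with exactly the cutoff $Q_V^W$ or $Q_V^{\prime W}$, and that each contribution is an $L^2$ cohomology of $X_P$. This is the micro-local heart of the argument in \cite{refnSaperLModules}. I would attack it by comparing $\Sh_W(\M)$ with weighted-cohomology-type truncations along each stratum, analyzing the resulting complexes of special differential forms. The growth conditions near the boundary of $X_P$ should be governed precisely by the sign of $\langle\xi_V+\hsr,\al\spcheck\rangle$, and this is exactly how the choice of $Q$ enters.
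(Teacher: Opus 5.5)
Your reduction to the case $\mS_\eta(\M)=\emptyset$ is fine, since $\mS_\eta(\M)\subseteq\mS(\M)$. The shape you aim for, local contributions $\type_{Q,V}(\M)\otimes H_{(2)}(X_P;\VV)$, is also the one the footnote suggests. But everything after the reduction rests on the step you call the main obstacle, and the mechanism you propose for it cannot work.

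Any d\'evissage over strata (the triangles \eqref{eqnFirstDistinguishedTriangle} or their $\ihat_Q^!$-refinements) has $E_1$-terms that are hypercohomology over the \emph{noncompact} stratum $X_P$ of complexes of local systems. These are sums of $H(X_P;\VV)\otimes H(i_P^!\M)_V$, with \emph{ordinary} cohomology of $X_P$. The condition defining $Q_V^W$ and $Q_V^{\prime W}$ only involves $\xi_V+\hsr$ on $\sa_P^S$, the directions normal to $X_P$. It says nothing about growth toward the cusps of $X_P$ (the directions $\sa_{P'}^P$, $P'<P$), so it cannot turn $H(X_P;\VV)$ into $H_{(2)}(X_P;\VV)$. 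For $P=S$ there is no condition at all, since $\D_S^S=\emptyset$.

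Nor does ordinary cohomology vanish for coefficients that are not conjugate self-contragredient. Take a neat arithmetic subgroup of $G=\mathrm{Res}_{F/\QQ}\mathrm{SL}_2$ with $F$ imaginary quadratic, and $E=\mathrm{Sym}^a\CC^2\otimes\overline{\mathrm{Sym}^b\CC^2}$ with $a\neq b$. Then $H^2(X;\EE)$ surjects onto the nonzero cohomology of the boundary tori, because the obstruction $H^3_c(X;\EE)\cong H_0(X;\EE)$ vanishes. For $\M=\W^\eta\C(E)$ on $\Xhat$ you have $\mS_\eta(\M)=\emptyset$ by Corollary~\ref{corEtamSofWetaC} and $H(\Xhat;\M)=0$ by Corollary~\ref{corWeightedCohomologyVanishing}. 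Yet the open-stratum term of your spectral sequence is $H(X;\EE)\neq0$. So the $E_1$-terms do not vanish individually. The vanishing is a cancellation between the boundary (Eisenstein) cohomology of each $X_P$ and the contributions of the strata $X_{P'}$, $P'<P$.

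A symptom of this is that your argument never uses compactness of $W$. The statement is false for noncompact $W$ in general and must be read with $W=\Xhat_S$, which is the setting of \cite{refnSaperLModules} and of every application in this paper. Indeed, for $W=X$ and $\M=E$ as above, $\mS(\M)=\emptyset$ but $H(W;\M)=H(X;\EE)\neq0$.

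Your claim that \eqref{eqnCompareLocalCohomology}--\eqref{eqnMayerVietorisSpectralSequence} let you change $Q$ ``without changing the global answer'' is also unsupported. These are statements about local cohomology along $X_P$, and their third terms live on strata $X_{\widetilde P}$ with $\widetilde P>P$. In this paper they are used (Lemmas~\ref{lemBased} and~\ref{lemLocalNonvanishing}) only to transfer nonvanishing to other elements of the weak micro-support, never to produce cancellation in global cohomology.

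What is missing is a localization that does not treat each noncompact stratum as a unit, so that the ends of $X_P$ can be played off against the smaller strata. You also need the analytic input identifying the resulting pieces with $L^2$-cohomology. In \cite{refnSaperLModules} this is a long separate argument using the geometry of $\Xhat$ (notably the tilings of \cite{refnSaperTilings}).

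Within the present paper, Theorem~\ref{thmBoundedLModulesAreMixed} would reduce the $\eta$-case for bounded $\M$ to $W^\eta H(\Xhat_P;\VV)=0$ for $V$ not conjugate self-contragredient. But that is Corollary~\ref{corWeightedCohomologyVanishing}, which is itself deduced from this theorem, so it would need an independent proof. As it stands, your proposal describes the expected answer rather than proving it.
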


\section{Homotopy category $\K(\L_W)$ of $\L$-modules}
\label{sectHomotopyCategory}
Let $W$ be an admissible set with a unique maximal stratum. Consider $\L$-modules $\M=(E_\cdot,f_{\cdot\cdot})$ and $\N=(F_\cdot,g_{\cdot\cdot})$ on $W$.  Two $\L$-morphisms $\phi_1$, $\phi_2\colon \M \to \N$ are homotopic \cite[\S3.9]{refnSaperLModules} if there are degree $-1$ morphisms
\[
h_{PQ}\colon H(\n_P^Q;E_Q) \longrightarrow F_P[-1] \qquad\text{for all $P\le Q\in\Pl(W)$}
\]
such that
\begin{multline}
  \phi_{1PR}-\phi_{2PR}= \sum_{P\le Q\le R} g_{PQ}[-1]\circ H(\n_P^Q; h_{QR})\\
  +\sum_{P\le Q\le R} h_{PQ}[1]\circ H(\n_P^Q;f_{QR}) \qquad \text{for all $P\le R\in\Pl(W)$.}
\end{multline}
Let $\K(\L_W)$ be the homotopy category of $\L$-modules on $W$; its morphisms are the homotopy classes of $\L$-morphisms.  Let $\K^b(\L_W)$ be the full subcategory whose objected are bounded $\L$-modules.

The \emph{mapping cone} of a morphism of $\L$-modules $\phi\colon \M \to \N$ is  the $\L$-module
\begin{equation}\label{eqnMappingCone}
  \cone(\phi)= \left( E_\cdot[1] \oplus F_\cdot, \begin{pmatrix} -f_{\cdot\cdot} & 0 \\ \phi_{\cdot\cdot} & g_{\cdot\cdot}\end{pmatrix}\right) \ .
\end{equation}
Define natural morphisms $\LeftMappingConeMorphism(\phi)\colon \N \to \cone(\phi)$ and $\RightMappingConeMorphism(\phi) \colon \cone(\phi) \to \M[1]$  by
\[
\LeftMappingConeMorphism(\phi)_{PP}= \begin{pmatrix}0\\ \id_{F_P}\end{pmatrix}\qquad \text{and}\qquad 
\RightMappingConeMorphism(\phi)_{PP}= \begin{pmatrix} \id_{E_P[1]} & 0\end{pmatrix}
\]
  for all $P\in\Pl(W)$.

The usual proof for the homotopy category of complexes, for example in \cite[\S\S1.4, 1.5]{refnKashiwaraSchapira}, generalizes to show that $\K(\L_W)$ is a triangulated category with the above definition of mapping cone; a distinguished triangle in $\K(\L_W)$ is a diagram isomorphic to
\begin{equation*}
  \cdots  \longrightarrow \M \xrightarrow{\ \phi\ } \N \xrightarrow{\LeftMappingConeMorphism(\phi)} \cone(\phi)
\xrightarrow{\RightMappingConeMorphism(\phi)} \M[1] \longrightarrow \cdots
\end{equation*}
for any morphism $\phi\colon \M \to \N$.  It is straightforward to check that $k^!$, $k^*$, $k_*$, and $k_!$ in the cases defined in \S\ref{ssectFunctors} are triangulated functors.

If $W=X_P$ one can show the functor $H(\cdot)\colon \Complex(L_P) \to \Graded(L_P)$ is cohomological by applying it to $\N\to \cone(\phi)\to\M[1]$ and then following the proof in  \cite[Prop.~1.5.6]{refnKashiwaraSchapira}.%
\footnote{For all admissible $W$ one can similarly show the global cohomology functor $H^0(W;\cdot)$ is cohomological.  We will not use this here.}
As a consequence for $V\in\IrrRep(W)$ and $Q\in[Q_V^W,Q_V^{\prime W}]$ we have a long exact sequence of $Q$-type \eqref{eqnType}:
\begin{equation}\label{eqnLESType}
\cdots  \rightarrow \type_{Q,V}^d(\M) \rightarrow \type_{Q,V}^d(\N) \rightarrow \type_{Q,V}^d(\cone(\phi)) 
          {\rightarrow} \type_{Q,V}^{d+1}(\M) \rightarrow \cdots\ .
\end{equation}

We note two distinguished triangles.
Assume $P$ is minimal within $\Pl(W)$ so that $X_P$ is closed in $W$ and hence $i_{P!}$ is defined.  
Then for $\M\in\K(\L_W)$ there are distinguished triangles:
\begin{gather}
\label{eqnFirstDistinguishedTriangle}
  \cdots  \longrightarrow i_{P!}i_P^! \M \longrightarrow \M \longrightarrow j_{P*}j_P^*\M 
           \longrightarrow  i_{P!}i_P^! \M[1] \longrightarrow \cdots \\
\label{eqnsecondDistinguishedTriangle}
  \cdots  \longrightarrow j_{P!}j_P^! \M \longrightarrow \M \longrightarrow i_{P*}i_P^*\M 
           \longrightarrow  j_{P!}j_P^! \M[1] \longrightarrow \cdots\makebox[0pt]{\ .}
\end{gather}
The proof of \eqref{eqnFirstDistinguishedTriangle} for example is to first note that $j_{P*}j_P^*\M$ has the data of $\M$ for indices $>P$ while  $ i_{P!}i_P^! \M[1]$ has the data of $\M[1]$ for $P$.  The morphism $j_{P*}j_P^*\M \to i_{P!}i_P^! \M[1]$ is given by $-f_{PQ}$ for $Q>P$ which results in its mapping cone as defined in \eqref{eqnMappingCone} being precisely $\M[1]$.

\begin{prop}\label{propQuasiIsomorphismIsHomotopyIsomorphism}
  If $\phi\colon \M\to \N$ induces an isomorphism on local cohomology for all $P\in\Pl(W)$ then $\phi$ is an isomorphism in the homotopy category.
\end{prop}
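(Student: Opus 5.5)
Since $\K(\L_W)$ is triangulated, $\phi$ is an isomorphism there if and only if its mapping cone $\cone(\phi)$ is isomorphic to $0$. So the plan is to show that an $\L$-module $\mathcal C$ whose local cohomology $H(i_P^*\mathcal C)$ vanishes for all $P\in\Pl(W)$ is null-homotopic, i.e.\ $\id_{\mathcal C}$ is homotopic to $0$. For $\cone(\phi)$ this vanishing holds because $i_P^*$ is triangulated. Hence $i_P^*\cone(\phi)$ is the mapping cone of $i_P^*\phi$ in $\Complex(L_P)$. Since $H$ is cohomological there, the long exact sequence together with the hypothesis that $H(i_P^*\phi)$ is an isomorphism gives $H(i_P^*\cone(\phi))=0$.

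I will prove the null-homotopy statement by induction on $\#\Pl(W)$, using the distinguished triangle \eqref{eqnsecondDistinguishedTriangle}, or more conveniently \eqref{eqnFirstDistinguishedTriangle}, for a minimal $P\in\Pl(W)$.

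\emph{Base case.} Here $W=X_P$, and an $\L$-module is just a complex of regular $L_P$-modules. Regular representations of the reductive group $L_P$ in characteristic zero are completely reducible. So each $E^i$ splits as $B^i\oplus H^i\oplus C^i$ (boundaries, a complement in cycles, a complement of cycles), and an acyclic complex is split exact. The splitting gives $L_P$-equivariant maps $h$ with $dh+hd=\id$. Here the fact that the $E^i$ are regular representations, and not merely graded modules, is used in an essential way.

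\emph{Inductive step.} Let $\mathcal C$ have vanishing local cohomology everywhere, and take $P$ minimal. The open subset $U=W\setminus X_P$ has fewer strata, and the local cohomology of $j_P^*\mathcal C$ at $Q>P$ equals that of $\mathcal C$. Indeed $i_Q^*$ only involves indices $\ge Q$, and $j_P^*=j_P^!$ merely restricts data. By induction $j_P^*\mathcal C\cong 0$ in $\K(\L_U)$, hence $j_{P*}j_P^*\mathcal C\cong0$, since $j_{P*}$ is triangulated and therefore preserves null-homotopies. The triangle \eqref{eqnFirstDistinguishedTriangle} then gives $\mathcal C\cong i_{P!}i_P^!\mathcal C$ in $\K(\L_W)$.

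Now compare local cohomology at $P$. The preceding isomorphism in $\K(\L_W)$ induces a homotopy equivalence of complexes $i_P^*\mathcal C\simeq i_P^*i_{P!}i_P^!\mathcal C=(E_P,f_{PP})$, the latter because $i_{P!}$ extends by zero. So $(E_P,f_{PP})$ is acyclic, hence null-homotopic by the base case. Therefore $i_{P!}i_P^!\mathcal C\cong0$, and so $\mathcal C\cong0$.

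The main obstacle is checking that the functors used are genuinely triangulated on $\K(\L_W)$ and respect homotopies, and that the restriction claim $i_Q^*j_P^*=i_Q^*$ holds. The paper asserts the former, and the latter is immediate from the formula for $i_Q^*$. Beyond that, the key input is complete reducibility, giving $L_P$-equivariant contracting homotopies in the base case.
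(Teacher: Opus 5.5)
Your proof is correct and follows the paper's argument: induction on $\#\Pl(W)$ via the triangle \eqref{eqnFirstDistinguishedTriangle} at a minimal stratum $X_P$, with the base case coming from semisimplicity of $\Mod(L_P)$. The only difference is cosmetic: you pass to $\cone(\phi)$ and show that an $\L$-module with vanishing local cohomology is null-homotopic, whereas the paper works with $\phi$ directly and uses the triangle $i_P^!\M\to i_P^*\M\to i_P^*j_{P*}j_P^*\M\to$ to conclude that $i_P^!\phi$ is an isomorphism.
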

\begin{rem*}
Hence a quasi-isomorphism of $\L$-modules is already an isomophism in the homotopy category, unlike the situation for complexes of sheaves.  Thus there is no need to invert quasi-isomorphisms and pass to a derived category.
\end{rem*}

\begin{proof}
  The proof is by induction on $\#\Pl(W)$.  The case $\#\Pl(W)=1$, $\L$-modules on a single stratum $X_P$, is clear since $\Mod(L_P)$ is a semi-simple category.  In the general case, let $X_P\subset W$ be a minimal stratum.  We know $i_P^*\phi$ is an isomorphism and by induction we know $j_{P*}j_P^*\phi$ is a homotopy isomorphism.  Thus the distinguished triangle $\to i_P^! \M \to i_P^* \M \to i_P^*j_{P*}j_P^*\M \to$ (apply $i_P^*$ to \eqref{eqnFirstDistinguishedTriangle}) shows that $i_P^!\phi$ is an isomorphism.  Finally \eqref{eqnFirstDistinguishedTriangle} then shows $\phi$ is a homotopy isomorphism.
\end{proof}

\section{Partial orders on $\IrrRep(\L_W)$}
\label{sectPartialOrder}

Let $W\subseteq \Xhat$ be admissible with unique maximal stratum $X_S$ and $P,R\in\P(W)$.

\subsection{The partial order $\pmb{\preccurlyeq}$}
\label{ssectPartialOrder}
We recall the partial order $\preccurlyeq$ on $\IrrRep(\L_W)$ and its variants from \cite[\S\S9.1, 22.3]{refnSaperLModules}.  Suppose $V_P\in\IrrRep(L_P)$ and $V_{R}\in\IrrRep(L_{R})$.  Define $V_P\preccurlyeq V_{R}$ if
\begin{enumerate}[leftmargin=*]
\item \label{itemParabolic} $P\le R$ and
\item \label{itemKostantComponent} $V_P=H^{\l(w)}(\n_P^{R};V_{R})_w$ for some $w\in W_P^{R}$.
\end{enumerate}
The $w$ here is unique and we let $[V_{R}:V_P]$ denote $\l(w)$.

We define $V_P \preccurlyeq_+ V_{R}$ (resp.\ $V_P\preccurlyeq_{-} V_{R}$) if, in addition to \ref{itemParabolic} and \ref{itemKostantComponent},
\begin{enumerate}[leftmargin=*]
  \setcounter{enumi}{2}
  \item \label{itemRootCone} $(\xi_{V_P}+\hsr)|_{\sa_P^{R}} \in
    \lsp+\sa_P^{R*}$ \quad (resp.\ $(\xi_{V_P}+\hsr)|_{\sa_P^{ R}} \in
    -\lsp+\sa_P^{ R*}$) \ .
\end{enumerate}
We write $V_P\preccurlyeq_0 V_{R}$ if $V_P \preccurlyeq_+ V_{R}$ and $V_P\preccurlyeq_- V_{R}$ both hold.%
\footnote{Examples of $V_P\prec_0 V_{R}$ always occur when $X$ is not equal-rank \cite[Lem.~8.8]{refnSaperLModules}.}

Finally define $V_P \preccurlyeq_{+,\str} V_{R}$ (resp.\ $V_P\preccurlyeq_{-,\str} V_{R}$) if, in addition to \ref{itemParabolic} and \ref{itemKostantComponent},
\begin{enumerate}[leftmargin=*,label=(\roman*)$_{\textup{str}}$]
  \setcounter{enumi}{2}
  \item \label{itemIntRootCone} $(\xi_{V_P}+\hsr)|_{\sa_P^{R}} \in
    \Int\lsp+\sa_P^{R*}$ \quad (resp.\ $(\xi_{V_P}+\hsr)|_{\sa_P^{ R}} \in
    -\Int\lsp+\sa_P^{ R*}$)\ .
\end{enumerate}
In all these notations we replace $\preccurlyeq$ by $\prec$ if $P<R$.

\begin{lem}
  \label{lemQInequalityImpliesSignedOrder}
  Assume $V_P\preccurlyeq V_{R}$.  If $R\le Q_{V_P} ^{\prime W}$ \textup(resp.~  $R\le Q_{V_P} ^{W}$\textup) then $V_P\preccurlyeq_- V_{R}$ \textup(resp.~ $V_P\preccurlyeq_{-,\str} V_{R}$\textup).  If $R\le (P,Q_{V_P}^W)\cap S$ \textup(resp.~  $R\le (P,Q_{V_P}^{\prime W})\cap S$\textup) then $V_P\preccurlyeq_+ V_{R}$ \textup(resp.~ $V_P\preccurlyeq_{+,\str} V_{R}$\textup).
\end{lem}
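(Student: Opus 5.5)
The plan is to reduce all four assertions to the statement that the dominant cone lies inside the root cone, and that its interior lies inside the interior of the root cone. Throughout, write $\lambda=(\xi_{V_P}+\hsr)|_{\sa_P^R}\in\sa_P^{R*}$.

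First I translate the hypotheses into sign conditions on $\lambda$. For $\al\in\D_P^R$ the coroot $\al\spcheck$ lies in $\sa_P^R$, so $\langle\lambda,\al\spcheck\rangle=\langle\xi_{V_P}+\hsr,\al\spcheck\rangle$. Since $R\le S$ we have $\D_P^R\subseteq\D_P^S$, and $R\le Q$ for $Q\ge P$ means $\D_P^R\subseteq\D_P^Q$. Hence each hypothesis gives the following.
\begin{itemize}
\item $R\le Q_{V_P}^{\prime W}$ gives $\langle\lambda,\al\spcheck\rangle\le0$ for all $\al\in\D_P^R$, so $-\lambda\in\sa_P^{R*+}$.
\item $R\le Q_{V_P}^W$ gives strict inequality $\langle\lambda,\al\spcheck\rangle<0$, so $-\lambda\in\Int\sa_P^{R*+}$.
\end{itemize}
For the other two hypotheses, recall that $(P,Q)$ corresponds to $\D_P\setminus\D_P^Q$. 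Then $R\le(P,Q)\cap S$ means $\D_P^R\subseteq\D_P^S\setminus\D_P^Q$, using that intersection of parabolics containing $P$ corresponds to intersection of subsets of $\D_P$.
\begin{itemize}
\item With $Q=Q_{V_P}^W$, every $\al\in\D_P^R$ has $\langle\xi_{V_P}+\hsr,\al\spcheck\rangle\ge0$, so $\lambda\in\sa_P^{R*+}$.
\item With $Q=Q_{V_P}^{\prime W}$, every $\al\in\D_P^R$ has $\langle\xi_{V_P}+\hsr,\al\spcheck\rangle>0$, so $\lambda\in\Int\sa_P^{R*+}$.
\end{itemize}
Condition \ref{itemKostantComponent} of $\preccurlyeq$ plays no role beyond guaranteeing $P\le R$.

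It therefore suffices to show $\sa_P^{R*+}\subseteq\lsp+\sa_P^{R*}$ and $\Int\sa_P^{R*+}\subseteq\Int\lsp+\sa_P^{R*}$. Expand $\lambda$ in the basis $\Dhat_P^R=\{\b_\al^R\}$ dual to the coroots: $\lambda=\sum_{\al\in\D_P^R}c_\al\b_\al^R$ with $c_\al=\langle\lambda,\al\spcheck\rangle$. Since $\{\b\spcheck\}$ is the basis dual to $\D_P^R$, the number $\langle\lambda,(\b_\gamma^R)\spcheck\rangle$ is the coefficient of $\gamma$ when $\lambda$ is written in the basis $\D_P^R$. That coefficient equals $\sum_\al c_\al\,m_{\al\gamma}$, where $m_{\al\gamma}$ is the coefficient of $\gamma$ in $\b_\al^R$. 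The needed input is the known fact (Langlands' lemma, as in \cite{refnArthurTraceFormula}) that the weights $\b_\al^R$ are nonnegative combinations of $\D_P^R$, and that $m_{\al\al}>0$. This holds even though $\D_P^R$ need not be the basis of a root system.

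Granting this, $c_\al\ge0$ for all $\al$ gives nonnegative coefficients, so $\lambda$ lies in the root cone. If all $c_\al>0$, the coefficient of $\gamma$ is at least $c_\gamma m_{\gamma\gamma}>0$, so $\lambda$ lies in the interior. Applying this to $\pm\lambda$ yields the four statements.

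The main obstacle is justifying that fact about $\Dhat_P^R$ in the relative setting of $\D_P^R$, where it is not literally a root system. I would first try to cite it from \cite{refnArthurTraceFormula}. Failing that, I would derive it from the absolute case: restrict the fundamental weights of $L_R$ relative to the simple roots of $\D^R$ to $\sa_P^R$, and use that the absolute inverse Cartan matrix has nonnegative entries with positive diagonal.
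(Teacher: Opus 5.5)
Your proposal is correct and is essentially the paper's argument. The paper's one-line proof reduces everything to the inclusions $\sa_P^{R*+}\subseteq\lsp+\sa_P^{R*}$ and $\Int\sa_P^{R*+}\subseteq\Int\lsp+\sa_P^{R*}$, citing Borel--Wallach [IV, \S6.2]; you additionally spell out the translation of the four hypotheses into sign conditions on $\langle\xi_{V_P}+\hsr,\al\spcheck\rangle$ for $\al\in\D_P^R$, and you sketch the standard Langlands nonnegativity argument for the inclusion, and both of these steps are sound.
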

\begin{proof}
The lemma follows from the fact that $\sa_P^{R+*}\subseteq \lsp+\sa_P^{R*}$ \cite[IV, \S6.2]{refnBorelWallach} and the similar inclusion of the interiors.
\end{proof}

\begin{lem}
  \label{lemInterleaveRepresentation}
If $V_P\prec V_{R}$ and $Q$ satisfies $P< Q <R$ then there exists a unique $V_{Q}\in\IrrRep(L_{Q})$ such that $V_P\prec V_{Q} \prec V_{R}$.  If  $V_P=H(\n_P^R;V_R)_w$ then $V_{Q}=H^{\l(w_{Q})}(\n_{Q}^{R};V_{R})_{w_{Q}}$ where $w=w^{Q}w_{Q}\in W_P^{Q}W_{Q}^{R}$.  The equality $[V_{R}:V_P]=[V_{R}:V_{Q}]+[V_{Q}:V_P]$ holds.
\end{lem}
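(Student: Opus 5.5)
The plan is to work entirely at the level of Weyl group combinatorics, using Kostant's theorem \eqref{eqnKostant} and the factorization $W_P^R=W_P^QW_Q^R$ with additivity of length recalled in \S\ref{ssectKostantsTheorem}. Write $V_R=V_{R,\lambda}$ and $V_P=H^{\l(w)}(\n_P^R;V_R)_w=V_{P,w(\lambda+\hsr_R)-\hsr_R}$ for the unique $w\in W_P^R$. Factor $w=w^Qw_Q$ with $w^Q\in W_P^Q$ and $w_Q\in W_Q^R$, and set $V_Q=H^{\l(w_Q)}(\n_Q^R;V_R)_{w_Q}=V_{Q,w_Q(\lambda+\hsr_R)-\hsr_R}$. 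Then $V_Q\prec V_R$ holds by definition, and we still need $V_P\prec V_Q$, the length identity, and uniqueness.

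\textbf{Existence.} By Kostant's theorem applied to $V_Q=V_{Q,\mu}$ with $\mu=w_Q(\lambda+\hsr_R)-\hsr_R$, the component of $H(\n_P^Q;V_Q)$ indexed by $w^Q$ is $V_{P,w^Q(\mu+\hsr_Q)-\hsr_Q}$ in degree $\l(w^Q)$. Here I would be careful with the normalizations of $\hsr$. Taking $\hsr$ to be the full half-sum (and relying on the standard fact that $w\in W_P^R$ maps the $R$-dominant chamber appropriately, so the $\rho$-shifts over different parabolics differ by $W$-invariant pieces that cancel), one gets
\[
w^Q(\mu+\hsr)-\hsr = w^Qw_Q(\lambda+\hsr)-\hsr = w(\lambda+\hsr)-\hsr .
\]
Thus the $w^Q$-component of $H(\n_P^Q;V_Q)$ is exactly $V_P$, so $V_P\prec V_Q$ with $[V_Q:V_P]=\l(w^Q)$. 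Additivity $\l(w)=\l(w^Q)+\l(w_Q)$ then gives $[V_R:V_P]=[V_R:V_Q]+[V_Q:V_P]$. Strictness $P<Q<R$ is given by hypothesis.

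\textbf{Uniqueness.} Suppose $V_P\prec V'_Q\prec V_R$. Then $V'_Q=H(\n_Q^R;V_R)_{u}$ for some $u\in W_Q^R$, and $V_P=H(\n_P^Q;V'_Q)_{v}$ for some $v\in W_P^Q$. By the same computation, $V_P$ has highest weight $vu(\lambda+\hsr)-\hsr$, and $vu\in W_P^R$ by the factorization. So $V_P$ appears as the $vu$-component of $H(\n_P^R;V_R)$. Kostant's theorem, together with the fact that $\lambda+\hsr$ is regular, shows that distinct $w\in W_P^R$ give distinct highest weights. Hence $vu=w$, and uniqueness of the factorization $W_P^R=W_P^QW_Q^R$ forces $u=w_Q$, so $V'_Q=V_Q$.

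\textbf{Main obstacle.} The step I expect to need the most care is the $\hsr$-bookkeeping: checking that $\hsr_Q$ versus $\hsr_R$ (as written in \eqref{eqnKostant}) combine correctly under composition. This is the same verification that underlies the isomorphism $H(\n_P^R;V_R)\cong H(\n_P^Q;H(\n_Q^R;V_R))$ quoted in \S\ref{ssectKostantsTheorem}. I would cite that argument, or reduce to full $\hsr$ using that $\hsr-\hsr_Q$ is fixed by $W^Q$.
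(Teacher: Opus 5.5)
Your proposal is correct and follows the same route as the paper. The paper's proof just cites Kostant's theorem together with the factorization $W_P^R=W_P^QW_Q^R$ and the additivity $\l(w)=\l(w^Q)+\l(w_Q)$ from \S\ref{ssectKostantsTheorem}. You spell out the details it leaves implicit: uniqueness via regularity of $\lambda+\hsr$ and uniqueness of the factorization $w=w^Qw_Q$. Your handling of the $\hsr$-shifts with the full $\hsr$ is sound, since $\hsr$ minus the half-sum of positive roots of $L_R$ is $W^R$-invariant.
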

\begin{proof}
  The lemma follows from Kostant's theorem; see \S\ref{ssectKostantsTheorem}.
\end{proof}

\begin{lem}
  \label{lemProjectPartialOrder}
  \leavevmode
  \begin{enumerate}[leftmargin=0em, labelsep=.5em, itemindent=2em]
  \item\label{itemCharacterProjection}
    If $V_P\preccurlyeq V_{Q}$ then $(\xi_{V_P}+\hsr_P)\vert_{\sa_{Q}}=\xi_{V_{Q}}+\hsr_{Q}$.
  \item
    \label{itemProjectionOfPartialOrder}
    If $V_P\preccurlyeq V_{Q}\preccurlyeq V_R$ for some $V_R \in\IrrRep(L_R)$ then
    \[
    V_R\succcurlyeq_+ V_P \quad\Longrightarrow\quad V_R\succcurlyeq_+  V_{Q} \quad\text{and}\quad   V_R\succcurlyeq_{+,\str} V_P \quad\Longrightarrow\quad V_R\succcurlyeq_{+,\str}  V_{Q}
    \]
    and similarly for $\preccurlyeq_-$ and $\preccurlyeq_{-,\str}$.
  \end{enumerate}
\end{lem}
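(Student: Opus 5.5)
For \ref{itemCharacterProjection} we compute directly from Kostant's theorem \eqref{eqnKostant}. Write $V_Q=V_{Q,\lambda}$ and $V_P=V_{P,w(\lambda+\hsr_Q)-\hsr_Q}$ with $w\in W_P^Q$. The character $\xi_{V_P}$ is the restriction of the highest weight $w(\lambda+\hsr_Q)-\hsr_Q$ to $\sa_P$. So $\xi_{V_P}+\hsr_P$ is the restriction of $w(\lambda+\hsr_Q)-\hsr_Q+\hsr_P$. Since $\hsr_P-\hsr_Q$ restricts on $\sa_P$ to half the sum of the roots in $\n_P^Q$, this equals the restriction of $w(\lambda+\hsr_Q)+\hsr$-type terms; more carefully, we restrict further to $\sa_Q$.

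Two facts are needed:
\begin{itemize}
\item $w\in W^Q$ fixes $\sa_Q$ pointwise, so $w(\lambda+\hsr_Q)|_{\sa_Q}=(\lambda+\hsr_Q)|_{\sa_Q}$.
\item $\hsr_P|_{\sa_Q}=\hsr_Q|_{\sa_Q}$, because the roots of $\n_P^Q$ are roots of $L_Q$ and their sum restricts trivially to $\sa_Q$ (the center).
\end{itemize}
Together these give $(\xi_{V_P}+\hsr_P)|_{\sa_Q}=\lambda|_{\sa_Q}+\hsr_Q=\xi_{V_Q}+\hsr_Q$. The one point to be careful about is the normalization of $\hsr_Q$: in \eqref{eqnKostant} it is the $\rho$ of $L_Q$ (its semisimple part), while $\hsr_Q$ on $\sa_Q$ means half the sum of the roots in $\n_Q$. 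I would make these conventions precise using $\hsr=\hsr^Q+\hsr_Q$ with $\hsr^Q$ vanishing on $\sa_Q$.

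For \ref{itemProjectionOfPartialOrder}, I would use the decomposition $\sa_P^R=\sa_P^Q\oplus\sa_Q^R$ and the projection $\sa_P^{R*}\to\sa_Q^{R*}$, which is restriction to $\sa_Q^R$. By \ref{itemCharacterProjection}, $(\xi_{V_P}+\hsr)|_{\sa_Q^R}=(\xi_{V_Q}+\hsr)|_{\sa_Q^R}$, since $\sa_Q^R\subseteq\sa_Q$ and $\hsr$ and $\hsr_P$ agree there. So it suffices to show that restriction to $\sa_Q^R$ maps $\lsp+\sa_P^{R*}$ into $\lsp+\sa_Q^{R*}$, and the interior into the interior.

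The root cone $\lsp+\sa_P^{R*}$ is the nonnegative span of $\D_P^R$. Under restriction to $\sa_Q^R$:
\begin{itemize}
\item the elements of $\D_P^Q$ go to $0$;
\item the elements of $\D_P^R\setminus\D_P^Q$ go bijectively onto $\D_Q^R$, by the definition of $\D_Q$ as restrictions of simple roots.
\end{itemize}
Hence a nonnegative combination maps to a nonnegative combination, and a strictly positive combination maps to a strictly positive combination. This holds because the interior of the root cone consists of strictly positive combinations of the basis $\D_P^R$, and the image is then a strictly positive combination of the basis $\D_Q^R$. The negative cases $\preccurlyeq_-$ and $\preccurlyeq_{-,\str}$ follow by negation.

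The remaining conditions \ref{itemParabolic} and \ref{itemKostantComponent} for $V_Q\preccurlyeq V_R$ hold by hypothesis. The main obstacle I expect is the bookkeeping of the various $\hsr$'s and the identification of restriction with the projection along the decomposition $\sa_P^{R*}=\sa_P^{Q*}+\sa_Q^{R*}$.
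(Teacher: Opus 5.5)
Your proposal is correct and follows the paper's proof. Part (i) uses Kostant's theorem and the fact that $w\in W_P^Q$ fixes $\sa_Q$ pointwise, and part (ii) combines (i) with the fact that restriction $\sa_P^{R*}\to\sa_Q^{R*}$ preserves the root cone and its interior. The only difference is that you verify this cone preservation directly (restriction kills $\D_P^Q$ and maps $\D_P^R\setminus\D_P^Q$ bijectively onto $\D_Q^R$), whereas the paper quotes it from \cite[Lemma 3.2]{refnSaperTilings}.
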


\begin{proof}
For \ref{itemCharacterProjection}, Kostant's theorem shows that $V_P=H^{\l(w)}(\n_P^{Q};V_{Q})_{w}$ has highest weight $w(\lambda_{Q}+\hsr)-\hsr$ where $\lambda_{Q}$ is the highest weight of $V_{Q}$.  Thus $(\xi_{V_P}+\hsr_P)\vert_{\sa_{Q}}=(w(\lambda_{Q}+\hsr))\vert_{\sa_{Q}}=(\lambda_{Q}+\hsr)\vert_{\sa_{Q}} =\xi_{V_{Q}}+\hsr_{Q}$ since $w\in W_P^{Q}$ acts trivially on $\sa_{Q}$.

For the first implication of \ref{itemProjectionOfPartialOrder} recall the fact that the projection $\sa_P^{R*}\to\sa_{Q}^{R*}$ preserves the root cone and its interior \cite[Lemma 3.2]{refnSaperTilings}.  Thus $(\xi_{V_P} + \hsr)|_{\sa_P^R} \in \lsp+ \sa_P^{R*}$ implies  $(\xi_{V_P} + \hsr)|_{\sa_{Q}^R} \in \lsp+ \sa_{Q}^{R*}$.  Now apply \ref{itemCharacterProjection}.  The other implications are similar.
\end{proof}

\subsection{The partial order $\pmb{\leqslant_{\eta}}$}
\label{ssectNewPartialOrder}

We now combine variants of $\prec_-$ and $\succ_{+}$ into two new partial orders on $\IrrRep(\L_W)$.  First a

\begin{lem}
  \label{lemNewPartialOrder}
If either $V_{P}\succcurlyeq_+ V_R$ or $V_P\preccurlyeq_- V_{R}$ then $\xi_{V_{P}}+\hsr_{P} \le \xi_{V_R}+\hsr_R$ \textup(after extension by $0$ to $\sa_{P_0}^{G*}$\textup). 
For $\succ_{+,\str}$ or $\prec_{-,\str}$ we obtain $\lneq$ on the characters.
\end{lem}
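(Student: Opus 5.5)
Throughout, the order $\le$ on characters is the one from \eqref{eqnNewGE}, taken in $\sa_{P_0}^{G*}$. It suffices to handle the case $V_P \preccurlyeq_- V_R$, so that $P\le R$. The case $V_P\succcurlyeq_+ V_R$ means $R\le P$, and the claimed inequality then has the opposite sign from the $+$ condition. I treat that case separately at the end.

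\emph{The case $V_P\preccurlyeq_- V_R$.} By Lemma~\ref{lemProjectPartialOrder}\ref{itemCharacterProjection},
\[
(\xi_{V_P}+\hsr_P)|_{\sa_R}=\xi_{V_R}+\hsr_R .
\]
Under the decomposition $\sa_P^*=\sa_R^*+\sa_P^{R*}$ this gives
\[
\xi_{V_P}+\hsr_P=(\xi_{V_R}+\hsr_R)+\lambda, \qquad \lambda=(\xi_{V_P}+\hsr_P)|_{\sa_P^R}.
\]
I have used that $\hsr$ and $\hsr_P$ agree on $\sa_P^R$, which holds because $\hsr^P$ vanishes on $\sa_P$. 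The $-$ condition says $\lambda\in-\lsp+\sa_P^{R*}$, so $\lambda$ is a nonpositive combination of the elements of $\D_P^R$.

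The key step is to transport this to $\sa_{P_0}^{G*}$ after extension by $0$. An element of $\D_P^R$ is the restriction of a simple root $\al\in\D^R\setminus\D^P$. Extending by $0$ from $\sa_P^*$ to $\sa_{P_0}^*$ amounts to orthogonal projection of $\al$ onto $\sa_P^*$. That projection is $\al$ minus a combination of roots in $\D^P$ with nonpositive coefficients, because $\langle\al,\b\rangle\le0$ for distinct simple roots and the inverse Cartan matrix of $\D^P$ has nonnegative entries. Hence the projection is a \emph{nonnegative} combination of simple roots in $\D^R$. Consequently $\lambda$, viewed in $\sa_{P_0}^{G*}$, lies in $-\lsp+\sa_{P_0}^{G*}$, and so $\xi_{V_P}+\hsr_P\le\xi_{V_R}+\hsr_R$.

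\emph{The strict version.} If $\lambda$ lies in $-\Int\lsp+\sa_P^{R*}$, then $\lambda\ne0$ whenever $P<R$, and a nonzero element of the cone gives $\lneq$. If $P=R$ the interior condition is vacuous and equality holds, so the strict statement is meant for $\prec$; I would read it that way.

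\emph{The case $V_P\succcurlyeq_+V_R$.} Here $V_R\preccurlyeq_+V_P$ and $R\le P$. The same computation, with the roles of $P$ and $R$ exchanged, gives
\[
\xi_{V_R}+\hsr_R=(\xi_{V_P}+\hsr_P)+\lambda', \qquad \lambda'\in\lsp+\sa_R^{P*}.
\]
Embedding as above shows $\lambda'\ge0$ in $\sa_{P_0}^{G*}$, so again $\xi_{V_P}+\hsr_P\le\xi_{V_R}+\hsr_R$, which is the stated direction. The strict version follows as in the previous case.

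\emph{Main obstacle.} The delicate step is the positivity claim for the extension-by-zero embedding $\lsp+\sa_P^{R*}\to\lsp+\sa_{P_0}^{G*}$. I would prove it via the projection fact from \cite[Lemma 3.2]{refnSaperTilings} in dual form, or via the nonnegativity of the inverse Cartan matrix argument above. I also need to check that "extension by $0$" here means the orthogonal embedding, so that the argument applies.
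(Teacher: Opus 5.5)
Your proof is correct and follows the paper's argument. You use Lemma~\ref{lemProjectPartialOrder}\ref{itemCharacterProjection} to identify the $\sa_R$-component, so the difference of the characters is the restriction to $\sa_P^{R}$ (resp.\ $\sa_R^{P}$); this lies in the root cone, or its interior, by hypothesis, and you then embed it in $\lsp+\sa_{P_0}^{G*}$. The only difference is that you prove the inclusion $\lsp+\sa_P^{R*}\subseteq\lsp+\sa_{P_0}^{G*}$ directly, via nonnegativity of the inverse Cartan matrix of $\D^P$, whereas the paper simply cites \cite[Remark~3.3(ii)]{refnSaperTilings}.
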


\begin{proof}
If $V_{P}\succcurlyeq_+ V_R$ then, by Lemma ~\ref{lemProjectPartialOrder}\ref{itemCharacterProjection}, $(\xi_{V_R}+\hsr_R)\vert_{\sa_{P}}=\xi_{V_{P}}+\hsr_{P}$.  
Since $\sa_R=\sa_{P} + \sa_R^{P}$, $(\xi_{V_R}+\hsr_R) - (\xi_{V_{P}}+\hsr_{P}) = (\xi_{V_R}+\hsr_R)\vert_{\sa_R^{P}}\in \lsp+\sa_R^{P*} \subseteq \lsp+\sa_{P_0}^{G*}$  (the inclusion holds by \cite[Remark~ 3.3(ii)]{refnSaperTilings}).  If $V_{P}\succ_{+,\str} V_R$ we must show in addition that $\xi_{V_{P}}+\hsr_{P} \neq \xi_{V_R}+\hsr_R$; this follows since the difference belongs to $ \Int\lsp+\sa_R^{P*}$ and $P\neq R$.  The proofs for $\preccurlyeq_-$ and  $\prec_{-,\str}$ are similar.
\end{proof}

Let $\LprecLeft$ be the smallest transitive relation on $\IrrRep(\L_W)$ for which $V_P\LprecLeft V_{R}$ holds in any of these three cases:
\begin{enumerate}[leftmargin=*,label=(\alph*),widest=xxxx]
\item\label{itemEqual} $V_P=V_{R}$,
\item\label{itemPrec} $V_P \prec_- V_{R}$, and
\item\label{itemSucc} $V_P \succ_{+,\str} V_{R}$.
\end{enumerate}
Likewise define $\LprecRight$ by replacing $\prec_-$ and $\succ_{+,\str}$ with $\prec_{-,\str}$ and $\succ_+$ respectively.

\begin{lem}
  The relations $\LprecLeft$ and $\LprecRight$ on $\IrrRep(\L_W)$ are partial orders.
\end{lem}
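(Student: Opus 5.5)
Reflexivity holds by case (a) and transitivity holds by construction, since $\LprecLeft$ is defined as the smallest transitive relation containing cases (a)--(c). So the only thing to prove is antisymmetry. The plan is to find a function on $\IrrRep(\L_W)$ that is weakly monotone along every generating relation and strictly monotone along every non-identity step that could occur in a cycle.

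The natural candidate is supplied by Lemma~\ref{lemNewPartialOrder}. To $V_P\in\IrrRep(L_P)$ attach the character $\xi_{V_P}+\hsr_P$, extended by $0$ to $\sa_{P_0}^{G*}$. There the relation $\le$ defined by the root cone $\lsp+\sa_{P_0}^{G*}$ is a genuine partial order, because this cone is pointed: it is a simplicial cone with a basis dual to a basis of $\sa_{P_0}^{G}$. By Lemma~\ref{lemNewPartialOrder}, each generating step of $\LprecLeft$ is weakly increasing for this character:
\begin{itemize}
\item case (a) is trivial;
\item case (b), $V_P\prec_- V_R$, gives $\xi_{V_P}+\hsr_P\le\xi_{V_R}+\hsr_R$;
\item case (c), $V_P\succ_{+,\str}V_R$, gives the strict inequality $\lneq$.
\end{itemize}

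Now suppose $V\LprecLeft V'$ and $V'\LprecLeft V$. Concatenating the two chains gives a cycle of generating steps starting and ending at $V$. Since the characters are weakly increasing around the cycle and return to their starting value, every inequality along the cycle is an equality. Hence no step of type (c) occurs, and every step is of type (a) or (b).

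A step of type (b), $V_P\prec_-V_R$, requires $P<R$. The parabolics therefore strictly increase along any non-identity step of the cycle, which is impossible for a cycle returning to the same $P$. So all steps are equalities and $V=V'$.

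For $\LprecRight$ the argument is the same with the roles swapped: the steps $\prec_{-,\str}$ are strictly increasing by Lemma~\ref{lemNewPartialOrder}, and the steps $\succ_+$ are weakly increasing. In a cycle only $\succ_+$ steps can survive, and each of these strictly decreases the parabolic ($R<P$ for $V_P\succ_+V_R$), again giving a contradiction.

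The main point to check carefully is that the characters for different parabolics are compared in a common space. This is exactly why the paper extends by $0$ to $\sa_{P_0}^{G*}$, and Lemma~\ref{lemNewPartialOrder} already asserts the inequality in that space. The only other things to verify are that $\le$ there is antisymmetric (the cone is pointed) and that the parabolic moves monotonically in a fixed direction within each type of non-strict step. Both are immediate.
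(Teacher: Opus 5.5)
Your proposal is correct and is essentially the paper's own proof. You use Lemma~\ref{lemNewPartialOrder} to see that the characters $\xi_{V}+\hsr$ are weakly increasing around any cycle of generating steps, hence constant; this rules out the strict generating steps and leaves only steps of type $\prec_-$ (resp.\ $\succ_+$ for $\LprecRight$), which strictly move the parabolic in one direction and so cannot close up. You also make explicit two points the paper leaves implicit, namely pointedness of the root cone and the strict inclusion of parabolics in those steps, and you write out the $\LprecRight$ case, which the paper omits.
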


\begin{proof}
  We will check antisymmetry for $\LprecLeft$.  Suppose $V_P\LprecLeft V_{R}$ and $V_{R}\LprecLeft V_P$.  Then there is a sequence $V_P=V_0,V_1,\dots,V_N=V_{R},V_{N+1},\dots, V_M=V_P$ where $V_i\in \IrrRep(P_i)$ and $V_{i-1}\LprecLeft V_{i}$ for $1\le i \le M$ by one of cases \ref{itemEqual}--\ref{itemSucc} above.  By Lemma ~\ref{lemNewPartialOrder}, $\xi_{V_{i-1}}+\hsr_{P_{i-1}} \le \xi_{V_{i}}+\hsr_{P_{i}}$ for $1\le i \le M$.  However $V_0=V_M$ so we must have equality at every step.  This implies, again by Lemma ~\ref{lemNewPartialOrder}, that no generating relation of the strict case \ref{itemSucc} can occur in this sequence.  Thus $V_P=V_{P_0}\preccurlyeq_- \dots\preccurlyeq_- V_N=V_{R} \preccurlyeq_-\cdots\preccurlyeq_- V_{P_M}=V_P$ so all $V_i$ are equal and in particular $V_P=V_{R}$.
\end{proof}
    
\begin{rem*}
Imposing strictness on one side is essential to obtaining a partial order since otherwise $V_P \LprecLeft V_{R}$ and $V_{R}\LprecLeft V_P$ would both hold when $V_P\neq V_{R}$ satisfy $V_P\prec_0 V_{R}$.
\end{rem*}

\begin{rem}
  In view of Lemma ~\ref{lemNewPartialOrder}, the partial orders $\LprecLeft$ and $\LprecRight$ on  $\IrrRep(\L_W)$ are likely related to the filtration on automorphic forms constructed by Franke in \cite{refnFranke}.
\end{rem}

\section{Weighted cohomology}
\label{sectWeightedCohomology}
In this section we recall from \cite[\S6]{refnSaperLModules} how to define Goresky, Harder, and MacPherson's weighted cohomology sheaf  \cite{refnGoreskyHarderMacPherson} as an $\L$-module and from \cite[\S16]{refnSaperLModules} how to calculate its micro-support.

We work with the weighted cohomology $\L$-module $\W^\eta\C(E_R)$ on $\Xhat_R$ where $R\in\Pl$, $E_R\in\Mod(L_R)$, and $\eta\in\sa^{G*}$.  We call $\eta\in\sa^{G*}$ a \emph{weight profile}; one can associate to $\eta$ a corresponding ``classical'' weight profile $p$ in the sense of \cite{refnGoreskyHarderMacPherson}.%
\footnote{%
Specifically for all $\al\in\D^R$ let $R_\al<R$ satisfy $\D^{R_\al}=\D^R\setminus\{\al\}$ and express $\eta_{R_\al}^R$ as $p_\al \chi_\al$ where $\chi_\al$ is the positive generator of $\Char(S_{R_\al}^R)$ with respect to $\al_{R_\al}$.  Then the weight profile $p\colon \D^R \to \ZZ +\frac12$ associated to $\eta$ is $p(\al) = \lfloor p_\al \rfloor - \frac12$.}

\subsection{Weight truncation of $\L$-modules}

If $Q\le R$, $E_Q\in \Mod(L_Q)$, and $\xi\in \Char(S_Q^R)\subset \sa_Q^{R*}$, let $E_{Q,\xi}\subseteq E_Q$ be the subspace on which $S_Q^R$ acts via $\xi$.  Thus $E_Q = \bigoplus_{\xi} E_{Q,\xi}$ and we set $\t^{\ge\eta^R} E_Q = \oplus_{\xi\ge\eta_Q^R} E_{Q,\xi}$ and similarly $\t^{\not\ge\eta^R} E_Q$ and $\t^{<\eta^R} E_Q$.
There is a canonically split short exact sequence $0\to \t^{\ge\eta^R} E_Q \to E_Q \to \t^{\not\ge\eta^R} E_Q\to 0$.

Given  an $\L$-module $\M$ on $\Xhat_R$, its \emph{$\eta$-weight truncation} along the $X_Q$ stratum is the mapping cone
\[
\t_Q^{\ge\eta^R} \M = \cone(\M \to i_{Q*}\t^{\not\ge \eta^R}i_Q^*\M)[-1]\ .
\]

\subsection{Weighted cohomology as an $\L$-module}
The \emph{weighted cohomology $\L$-module $\W^\eta\C(E_R)$} for $E_R\in\Mod(L_R)$ is 
\[
\W^\eta\C(E_R) = \t_{Q_1}^{\ge\eta^R}j_{Q_1*}\cdots\t_{Q_N}^{\ge\eta^R}j_{Q_N*}i_{R*}E_R
\]
where $Q_1,\dots,Q_N$ is an enumeration of $\Pl(\Xhat_R)\setminus\{R\}$  such that if $Q_i<Q_j$ then $i<j$.  This agrees with the definition in \cite[\S6]{refnSaperLModules} and is independent of the choice of ordering.
The realization of $\W^\eta\C(E_R)$ in the derived category is the weighted cohomology sheaf of Goresky, Harder, and MacPherson \cite{refnGoreskyHarderMacPherson} for the associated ``classical'' weight profile  $p$.

\begin{prop}
  Let $E_R\in\Mod(L_R)$ and $P\le R$.  Then
  \begin{align}
    \label{eqnLocalWCohomology}
    H(i_P^*\W^\eta\C(E_R)) &= \tau^{\ge\eta^R}H(\n_P^R;E_R)\ ,\\
    \label{eqnLocalWCCohomologyWithSupports}
    H(i_P^!\W^\eta\C(E_R)) &= \tau^{<\eta^R}H(\n_P^R;E_R)[-\#\D^R_P]
  \end{align}
  and there is a split short exact sequence for the link cohomology
  \begin{equation}
0\to H(i_P^*\W^\eta\C(E_R)) \to    H(i_P^*j_{P*}j_P^*\W^\eta\C(E_R)) \to  H(i_P^!\W^\eta\C(E_R))[1] \to 0\ .
  \end{equation}
\end{prop}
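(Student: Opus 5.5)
We argue by induction on the number of strata, peeling off truncations one stratum at a time as in the definition of $\W^\eta\C(E_R)$; since the definition is independent of the ordering, for a fixed $P\le R$ we may choose the enumeration $Q_1,\dots,Q_N$ with $Q_1=P$ whenever $P\neq R$ (possible because we only need $Q_i<Q_j\Rightarrow i<j$ and may place $P$ first among strata in $\Xhat_P$ after handling the strata not below $P$; more simply, $i_P^*$ only sees the strata in $U_P\cap\Xhat_R=[P,R]$, so we may restrict to the open star and assume $P$ is the unique minimal stratum, which then must be $Q_1$). Write $\M'=\t_{Q_2}^{\ge\eta^R}j_{Q_2*}\cdots i_{R*}E_R$ on $\Xhat_R\setminus X_P$, so $\W^\eta\C(E_R)=\t_P^{\ge\eta^R}j_{P*}\M'$. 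By induction on $\#[P,R]$, applied to strata $\widetilde P$ with $P<\widetilde P\le R$, we know $H(i_{\widetilde P}^*\M')=\t^{\ge\eta^R}H(\n_{\widetilde P}^R;E_R)$.

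First compute the link cohomology $H(i_P^*j_{P*}\M')$. We claim it equals $H(\n_P^R;E_R)$. Indeed the complex $i_P^*j_{P*}\M'$ is $\bigoplus_{P<\widetilde P}H(\n_P^{\widetilde P};E'_{\widetilde P})$; filtering by strata, it is the total complex whose $E_1$ is $\bigoplus_{P<\widetilde P\le R}H(\n_P^{\widetilde P};H(i_{\widetilde P}^!\M'))$. By the inductive version of \eqref{eqnLocalWCCohomologyWithSupports} and Kostant's theorem (\S\ref{ssectKostantsTheorem}, $H(\n_P^R)=H(\n_P^{\widetilde P};H(\n_{\widetilde P}^R))$) this is $\bigoplus_{\widetilde P}\t^{<\eta^R}_{\widetilde P}H(\n_P^R;E_R)[-\#\D_{\widetilde P}^R]$, where the truncation is on the $\sa_{\widetilde P}^{R}$-component of the weight. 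Decompose $H(\n_P^R;E_R)$ into $S_P$-weight spaces $\xi$; for fixed $\xi$ the contributing $\widetilde P$ are those with $\xi|_{\sa_{\widetilde P}^R}\not\ge\eta_{\widetilde P}^R$, and the combinatorial identity that makes everything work is that the resulting sum over $\widetilde P>P$ (a Mayer--Vietoris/Fary type complex, cf.\ \eqref{eqnFarySpectralSequence}) has total cohomology equal to one copy of the $\xi$-weight space, contributed by $\widetilde P=R$ when $\xi\ge\eta_P^R$, and by the ``link'' when $\xi\not\ge\eta_P^R$, in which case it appears as $\t^{<\eta^R}H(\n_P^R;E_R)[-\#\D_P^R+1]$. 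Precisely: the link cohomology is $\t^{\ge\eta^R}H\oplus\t^{<\eta^R}H[-\#\D_P^R+1]$ with $H=H(\n_P^R;E_R)$. This uses the basic fact from \cite{refnGoreskyHarderMacPherson} that for $\xi\in\sa_P^{R*}$ the set $\{\widetilde P: \xi_{\widetilde P}^R\ge\eta_{\widetilde P}^R\}$ is determined by a subset of $\D_P^R$ and the simplicial complex computing the link is either contractible or a sphere $S^{\#\D_P^R-2}$ accordingly, with $\t^{\not\ge}$ equal to $\t^{<}$ for middle-type profiles. This is the main obstacle; I would prove it via the description $\xi\ge\eta$ on $\sa_P^{R*}$ in terms of the basis $\Dhat_P^R$ and the projection property \cite[Lemma 3.2]{refnSaperTilings}.

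Given this, the truncation triangle $\t_P^{\ge\eta^R}j_{P*}\M'\to j_{P*}\M'\to i_{P*}\t^{\not\ge\eta^R}i_P^*j_{P*}\M'$ gives, after $i_P^*$, that the local cohomology at $P$ is the $\t^{\ge\eta^R}$ part of the link cohomology, namely $\t^{\ge\eta^R}H(\n_P^R;E_R)$, proving \eqref{eqnLocalWCohomology}. Then applying $i_P^*$ to \eqref{eqnFirstDistinguishedTriangle} gives the long exact sequence $H(i_P^!)\to H(i_P^*)\to H(i_P^*j_{P*}j_P^*)\to$; since the first map into the link is the inclusion of the $\t^{\ge\eta^R}$ summand (an $S_P$-weight argument: $\Mod(L_P)$ is semisimple and the maps are $S_P$-equivariant), the connecting maps vanish on weight grounds, yielding the split short exact sequence and $H(i_P^!\W^\eta\C(E_R))[1]=\t^{<\eta^R}H[-\#\D_P^R+1]$, i.e.\ \eqref{eqnLocalWCCohomologyWithSupports}.
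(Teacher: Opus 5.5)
Your outer structure is the natural reconstruction; the paper itself only cites Saper's $\L$-module paper, \S\S16.1--16.2, and Goresky--Harder--MacPherson (18.2). You restrict to the open star so that $P=Q_1$, compute $H(i_P^*j_{P*}\M')$ by the Fary spectral sequence with inductive costalks on $(P,R]$, then read off the stalk from the truncation triangle and the costalk and splitting from the attaching triangle. The last two steps are fine once the link formula $\t^{\ge\eta^R}H\oplus\t^{<\eta^R}H[-\#\D_P^R+1]$, with $H=H(\n_P^R;E_R)$, is known. That formula is correct.

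The justification you give for it, which is the step you call the main obstacle, is wrong. You assert three things: the contributing $\widetilde P$ are those with $\xi|_{\sa_{\widetilde P}^R}\not\ge\eta_{\widetilde P}^R$; every $\xi\not\ge\eta_P^R$ contributes one copy to the link (whence ``the link equals $H(\n_P^R;E_R)$''); and $\t^{\not\ge}=\t^{<}$ for middle profiles. Taken literally, this yields the false costalk $\t^{\not\ge\eta^R}H[-\#\D_P^R]$. Once $\#\D_P^R\ge2$ there are weights that are neither $\ge\eta$ nor $<\eta$. For $SL_3$, $P$ the Borel, trivial coefficients: $w=s_1s_2,s_2s_1$ give $\xi+\rho=-\al_1,-\al_2$, which are mixed for $\nu$; and $w=s_1,s_2$ give $\xi+\rho=\al_2,\al_1$, which are mixed for $\mu$.

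The correct bookkeeping is as follows. Write $\xi-\eta_P^R=\sum_{\al\in\D_P^R}c_\al\al$ and $N=\{\al: c_\al<0\}$. Index $\widetilde P\in(P,R]$ by $K=\D_P^R\setminus\D_P^{\widetilde P}\neq\D_P^R$. The pairings of $(\xi-\eta_P^R)|_{\sa_{\widetilde P}^R}$ with the $\b\spcheck$, $\b\in\Dhat_{\widetilde P}^R$, are exactly the $c_\al$ with $\al\in K$. So the inductive costalk $\t^{<\eta^R}$ at $\widetilde P$ contributes if and only if $K\subseteq N$, and the $V_P$-part of $E_1$ is $\bigoplus_{K\subseteq N,\,K\neq\D_P^R}V_P[-\l(w)-\#K]$. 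This gives a trichotomy, not a contractible/sphere dichotomy:
\begin{itemize}
\item if $N=\emptyset$ (that is, $\xi\ge\eta$), only the $R$-term survives, giving $V_P[-\l(w)]$;
\item if $N=\D_P^R$ (that is, $\xi<\eta$), one gets the augmented cochains of $\partial\Delta^{\#\D_P^R-1}$, hence $V_P[-\l(w)-\#\D_P^R+1]$;
\item if $\emptyset\neq N\subsetneq\D_P^R$, one gets the augmented cochains of a full simplex, which are acyclic.
\end{itemize}
The vanishing in the mixed case is exactly why the answer is $\t^{<}$ rather than $\t^{\not\ge}$. Your final step needs it when you identify the cokernel of $H(i_P^*\W^\eta\C(E_R))\to H(i_P^*j_{P*}\M')$ with $\t^{<\eta^R}H[-\#\D_P^R+1]$.

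There is a second gap: knowing which $E_1$-terms are nonzero does not yet identify the complex. Take adjacent terms $\widetilde Q<\widetilde P$ with $\#\D_{\widetilde Q}^{\widetilde P}=1$ and both corresponding $K$'s contained in $N$. You must show the $d_1$-component between them is an isomorphism on the relevant isotypic part $V_{\widetilde Q}$. That component is $H(\n_P^{\widetilde Q};\cdot)$ applied to the connecting map $H(\n_{\widetilde Q}^{\widetilde P};H(i_{\widetilde P}^!\M'))\to H(i_{\widetilde Q}^!\M')[1]$. It is an isomorphism on $V_{\widetilde Q}$ precisely when $H(i_{\widetilde Q}^*\ihat_{\widetilde P}^!\M')_{V_{\widetilde Q}}=0$. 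That vanishing does hold, by three ingredients:
\begin{itemize}
\item the long exact sequence comparing supports in $\Xhat_{\widetilde P}$ and $\Xhat_R$;
\item the Mayer--Vietoris spectral sequence;
\item your inductive stalk formula, applied using that all $c_\al$ with $\al\in\D_P^R\setminus\D_P^{\widetilde Q}$ are negative.
\end{itemize}
But it has to be argued, since the projection property alone does not give it. Take $E_R$ irreducible, so that each $V_P$ occurs once by Kostant's theorem. With these two points supplied, your induction goes through for an arbitrary weight profile $\eta$; no middle-profile hypothesis is needed.
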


\begin{proof}
  See \cite[\S\S16.1,16.2]{refnSaperLModules} and \cite[(18.2)]{refnGoreskyHarderMacPherson}.
\end{proof}

\subsection{Middle weight profiles}
For $P\le R$ we will be using the orders on $\sa_P^{R*}$ from \eqref{eqnNewGE} and \eqref{eqnNewLT}.

\begin{lem}
  \label{lemEpsisonCondition}
  There exists $\epsilon>0$ such that for all $P\le R \in \Pl$ and all characters $\psi\in\Char(S_P^R)$ we have
  \begin{alignat}{3}
    \label{eqnPositiveImpliesGreaterEpsilon}
    \psi&>0 &\quad &\Longrightarrow\quad & \psi &> \epsilon\hsr_P^R \qquad\text{and}\\
    \label{eqnLessEpsilonImpliesNegative}
    \psi &< \epsilon\hsr_P^R &\quad&\Longrightarrow \quad &\psi &\le 0\ .
  \end{alignat}
\end{lem}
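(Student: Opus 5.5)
The plan is a discreteness and finiteness argument. There are only finitely many pairs $P\le R$ in $\Pl$, since $\Pl$ is finite. So it suffices to produce, for each fixed pair, an $\epsilon_{P,R}>0$ satisfying both implications, and then set $\epsilon=\min_{P\le R}\epsilon_{P,R}$. The conditions are monotone in $\epsilon$, so the minimum still works. For the first implication, if $\psi>\epsilon\hsr_P^R$ then $\psi>\epsilon'\hsr_P^R$ for all $0<\epsilon'\le\epsilon$, because $\hsr_P^R\in\Int\lsp+\sa_P^{R*}$ and hence $\epsilon\hsr_P^R-\epsilon'\hsr_P^R\ge0$. The second implication is monotone in the same way.

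Fix $P\le R$ and work in the coordinates $\langle\,\cdot\,,\b\spcheck\rangle$ for $\b\in\Dhat_P^R$; these describe the cone $\lsp+\sa_P^{R*}$ via \eqref{eqnRootWeightCone}. The key observation is that $\Char(S_P^R)$ is a lattice in $\sa_P^{R*}$. Consequently the finitely many linear functionals $\psi\mapsto\langle\psi,\b\spcheck\rangle$ take values on $\Char(S_P^R)$ in a discrete subgroup of $\RR$, namely $\frac1{m}\ZZ$ for some positive integer $m$. This holds because the $\b\spcheck$ are rational combinations of coroots, and characters pair rationally with them. Let $\delta>0$ be the minimal positive value attained, so $\delta=1/m$.

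Also $\hsr_P^R$ lies in the interior of the root cone, so $c_\b:=\langle\hsr_P^R,\b\spcheck\rangle>0$ for every $\b$. Let $C=\max_\b c_\b$ and choose $\epsilon_{P,R}<\delta/C$.

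With this choice, both implications follow coordinatewise. Suppose $\psi>0$. Then every coordinate $\langle\psi,\b\spcheck\rangle$ is positive, hence at least $\delta$, which exceeds $\epsilon c_\b$. This gives $\psi>\epsilon\hsr_P^R$, which is \eqref{eqnPositiveImpliesGreaterEpsilon}.

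Now suppose $\psi<\epsilon\hsr_P^R$. Then each coordinate satisfies $\langle\psi,\b\spcheck\rangle<\epsilon c_\b<\delta$. Since the coordinate lies in $\frac1m\ZZ$, it must be $\le0$. Hence $-\psi\in\lsp+\sa_P^{R*}$, that is $\psi\le0$, which is \eqref{eqnLessEpsilonImpliesNegative}.

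The only step requiring care is the rationality and discreteness claim. To justify it, I would note that $\Dhat_P^R$ is dual to the coroot basis, which is built from $\QQ$-roots. Therefore $\b\spcheck$ is a rational combination of $\al\spcheck$'s in $\D_P^R$, or more precisely the dual of the basis $\Dhat_P^R$ consisting of rational combinations of the $\al$. Meanwhile $\Char(S_P^R)$ sits in the $\QQ$-span of the restricted roots with bounded denominators, as a finite-index lattice. Together these give a common denominator $m$ for the finitely many functionals.
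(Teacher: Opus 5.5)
Your proposal is correct and follows essentially the same argument as the paper. The paper also notes that $\langle\psi,\b\spcheck\rangle$ is the coefficient of $\psi$ in the basis $\D_P^R$, which lies in $\frac{1}{N_{P,R}}\ZZ$ because $\ZZ\D_P^R$ has finite index $N_{P,R}$ in $\Char(S_P^R)$; it then chooses $\epsilon$ below $N_{P,R}^{-1}\langle\hsr_P^R,\b\spcheck\rangle^{-1}$ for the finitely many $P\le R$ and $\b$ directly, so your minimum-and-monotonicity step is redundant but harmless.
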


\begin{proof}
The $\ZZ$-span of $\D_P^R$ has finite index in the character lattice $\Char(S_P^R)$; let $N_{P,R}$ be its index.  Since $\hsr_P^R\in \Int \lsp{+}\sa_P^{R*}$ we know $\langle \hsr_P^R,\b\spcheck\rangle>0$ for all $\b\in\widehat\D_P^R$.  Choose $\epsilon < N_{P,R}^{-1} \langle \hsr_P^R,\b\spcheck\rangle^{-1}$ for all $P\le R$ and all $\b\in\widehat\D_P^R$.  Consider $\psi\in\Char(S_P^R)$.  Then $N_{P,R}\psi = \sum_{\al\in\D_P^R} c_\al \al$ where all $c_\al\in\ZZ$.  If $\psi>0$ and  $\beta\in\Dhat_P^R$ corresponds to some $\al\in\D_P^R$ then $\langle N_{P,R}\psi,\b\spcheck\rangle=c_\al\ge 1$.  Thus $\langle \psi,\b\spcheck\rangle \ge N_{P,R}^{-1}>\epsilon  \langle \hsr_P^R,\b\spcheck\rangle$ which proves \eqref{eqnPositiveImpliesGreaterEpsilon}.  The proof of the contrapositive of  \eqref{eqnLessEpsilonImpliesNegative} is similar.
\end{proof}

The \emph{upper and lower middle weight profiles} $\mu$ and $\nu\in\sa^{G*}$ are defined by
\[
\mu = -\hsr + \epsilon\hsr \quad\text{and}\quad \nu=-\hsr
\]
where $\epsilon>0$ is as in Lemma ~\ref{lemEpsisonCondition}. For $\xi\in \Char(S_P^R)$ we have the following middle weight profile truncations:
\begin{alignat}{2}
  \label{eqnGreaterThanMu}
  \xi\ge \mu_P^R &\quad\Longleftrightarrow \quad \xi +\hsr_P^R\in \Int\lsp{+}\sa_P^{R*} \quad&&\Longleftrightarrow\quad V_P\prec_{+,\str} V_{R}\ ,\\
  \label{eqnGreaterThanNu}
  \xi\ge \nu_P^R &\quad\Longleftrightarrow \quad \xi +\hsr_P^R\in \lsp{+}\sa_P^{R*} \quad&&\Longleftrightarrow\quad V_P\prec_{+} V_{R}\ ,\\
  \label{eqnStrictlyLessThanMu}
  \xi< \mu_P^R &\quad\Longleftrightarrow \quad \xi +\hsr_P^R\in -\lsp{+}\sa_P^{R*}\quad &&\Longleftrightarrow\quad V_P\prec_{-} V_{R} \ ,\\
  \label{eqnStrictlyLessThanNu}
  \xi< \nu_P^R &\quad\Longleftrightarrow \quad \xi +\hsr_P^R\in -\Int\lsp{+}\sa_P^{R*}\quad &&\Longleftrightarrow\quad V_P\prec_{-,\str} V_{R}\ .
\end{alignat}
The first equivalence in each line follows from the definitions above; in the cases involving $\mu$ we apply Lemma ~\ref{lemEpsisonCondition} to $\psi=\xi+\hsr_P^R$.  The second equivalence in each line are the definitions of the partial orders from \S\ref{ssectPartialOrder}.

In particular this shows that our $\W^\mu\C(E_R)$ and $\W^\nu\C(E_R)$ correspond to the ``classical'' upper and lower weight profiles from \cite{refnGoreskyHarderMacPherson}.

\subsection{The micro-support of weighted cohomology}
For a middle weight profile the micro-support of the weighted cohomology $\L$-module was calculated in \cite[Theorem~ 16.3]{refnSaperLModules}:

\begin{prop}
  \label{propMicroSupportWeightedCohomology}
  For $P\le R$ let $V_R \in \IrrRep(L_R)$ and $V_P \in \IrrRep(L_P)$.  Let $\eta$ be a middle weight profile.  Then $V_P\in \wmS(\W^\eta\C(V_R))$ if and only if $V_P \preccurlyeq_0 V_R$.  Furthermore\textup:
  \begin{enumerate}[leftmargin=0em, labelsep=.5em, itemindent=2em]
  \item For such $V_P$,
    \begin{equation}\label{eqnwmSForWmuC}
    H(i_P^*\ihat_Q^!(\W^\mu\C(V_R))_{V_P} = V_P[-[V_R:V_P]-\#\D_P^R]
    \end{equation}
    when $Q=Q_{V_P}^R=P$ and is zero otherwise.
  \item For such $V_P$,
    \begin{equation}\label{eqnwmSForWnuC}
    H(i_P^*\ihat_Q^!(\W^\nu\C(V_R))_{V_P} = V_P[-[V_R:V_P]]
    \end{equation}
    when $Q=Q_{V_P}^{'R}=R$ and is zero otherwise.
  \end{enumerate}
\end{prop}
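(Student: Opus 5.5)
The plan is to compute every $Q$-type $\type_{Q,V_P}(\W^\eta\C(V_R))=H(i_P^*\ihat_Q^!\W^\eta\C(V_R))_{V_P}$ directly from the local formulas \eqref{eqnLocalWCohomology} and \eqref{eqnLocalWCCohomologyWithSupports}, together with Kostant's theorem, and then read off the conditions. Since $\W^\eta\C(V_R)$ lives on $\Xhat_R$, only $Q\le R$ matter. The local cohomology $i_P^*\ihat_Q^!$ is obtained by restricting to $\Xhat_Q$ with supports. Using the iterated truncation definition, for $P\le Q\le R$ I would first show that
\[
\ihat_Q^!\W^\eta\C(V_R)\cong \W^\eta\C\bigl(\tau^{<\eta^R}H(\n_Q^R;V_R)\bigr)[-\#\D_Q^R]
\]
in $\K(\L_{\Xhat_Q})$. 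This is the expected analogue of \eqref{eqnLocalWCCohomologyWithSupports}: the truncations are compatible with $H(\n_P^Q;H(\n_Q^R;\cdot))\cong H(\n_P^R;\cdot)$, and $\tau^{<\eta_Q^R}$ commutes with the $S_P^Q$-truncations because $\sa_P^{R*}=\sa_P^{Q*}\oplus\sa_Q^{R*}$. Then \eqref{eqnLocalWCohomology} on $\Xhat_Q$ gives
\[
H(i_P^*\ihat_Q^!\W^\eta\C(V_R))=\tau^{\ge\eta^Q}\,\tau^{<\eta^R_Q}H(\n_P^R;V_R)[-\#\D_Q^R].
\]

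Next I take the $V_P$-isotypic part. By Kostant's theorem, $V_P$ occurs in $H(\n_P^R;V_R)$ only if $V_P\preccurlyeq V_R$, and then exactly once, in degree $[V_R:V_P]$. Its character $\xi_{V_P}$ lies in the truncation if and only if $(\xi_{V_P}+\hsr)|_{\sa_P^Q}$ satisfies the ``$\ge\eta$'' condition and $(\xi_{V_P}+\hsr)|_{\sa_Q^R}$ satisfies the ``$<\eta$'' condition. Using \eqref{eqnGreaterThanMu}--\eqref{eqnStrictlyLessThanNu} and Lemma~\ref{lemProjectPartialOrder}, these become cone conditions. For $\eta=\mu$ the conditions are that the $\sa_P^Q$ part lies in $\Int\lsp+\sa_P^{Q*}$ and the $\sa_Q^R$ part lies in $-\lsp+\sa_Q^{R*}$. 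For $\eta=\nu$ they are the closed cone on $\sa_P^Q$ and the negative interior on $\sa_Q^R$.

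Then I intersect with the requirement $Q\in[Q_{V_P}^R,Q_{V_P}^{\prime R}]$. The defining property of $Q_{V_P}$ says that $\langle\xi+\hsr,\al\spcheck\rangle<0$ for $\al\in\D_P^{Q_V}$ and $\ge0$ (resp.\ $>0$ for $Q'_V$) off it. I would argue, via the tilings result of \cite{refnSaperTilings} (Langlands' geometric lemma that the root cone on $\sa_P^Q$ and the negative root cone on $\sa_Q^R$ single out a unique $Q$), that the pair of cone conditions combined with $Q\in[Q_{V_P},Q'_{V_P}]$ forces the following. For $\mu$, the $\sa_P^Q$ part must be both in $\Int\lsp+$ and coroot-negative on $\D_P^Q$, so $Q=P$. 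For $\nu$, the $\sa_Q^R$ part must be in $-\Int\lsp+$ yet coroot-nonnegative off $\D_P^{Q'}$, so $Q=R$. In both cases the surviving condition is $(\xi_{V_P}+\hsr)|_{\sa_P^R}\in-\lsp+$ (for $\mu$ with $Q=P$) together with the matching condition from $Q\in[Q_V,Q'_V]$. Lemma~\ref{lemQInequalityImpliesSignedOrder} then converts these into $V_P\preccurlyeq_-V_R$ and $V_P\preccurlyeq_+V_R$, that is, $V_P\preccurlyeq_0V_R$, with $Q_{V_P}^R=P$ (resp.\ $Q_{V_P}^{\prime R}=R$). The degree shifts $[V_R:V_P]+\#\D_P^R$ and $[V_R:V_P]$ come from the displayed formula. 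The converse direction, that $\preccurlyeq_0$ gives a nonzero type, reads off the same computation.

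I expect the main obstacle to be the convex-geometric step: showing that a vector whose root-cone/coroot sign pattern is compatible with some $Q\in[Q_V,Q'_V]$ must also satisfy the $\preccurlyeq_0$ condition, and that $Q$ is forced to the extreme value. This needs the nonobtuse/obtuse cone decomposition, in the form that the sets $\{\lambda:\lambda|_{\sa_P^Q}\in\Int\lsp+,\ \lambda|_{\sa_Q^R}\in-\lsp+\}$ tile $\sa_P^{R*}$. I would import this from \cite{refnSaperTilings}, which is exactly what \cite[Theorem~16.3]{refnSaperLModules} does.
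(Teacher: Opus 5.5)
The paper gives no argument of its own for this statement; it quotes \cite[Theorem~16.3]{refnSaperLModules}. Your plan has a sensible shape: compute every $\type_{Q,V_P}$, then do sign combinatorics. Your values at the extreme strata are also right, because $i_P^*\ihat_P^!=i_P^!$ and $\ihat_R^!$ is the identity on $\Xhat_R$, so \eqref{eqnLocalWCCohomologyWithSupports} and \eqref{eqnLocalWCohomology} apply directly there.

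\textbf{The gap.} Your key step for $P<Q<R$ is the isomorphism $\ihat_Q^!\W^\eta\C(V_R)\cong\W^\eta\C(\t^{<\eta^R}H(\n_Q^R;V_R))[-\#\D_Q^R]$, and it is false. Apply $i_P^!$ to both sides. Since $i_P^!\ihat_Q^!=i_P^!$, the isomorphism would give $\t^{<\eta^R}H(\n_P^R;V_R)=\t^{<\eta^Q}\t^{<\eta^R_Q}H(\n_P^R;V_R)$. That says the root cone of $\sa_P^{R*}$ splits along $\sa_P^{R*}=\sa_P^{Q*}\oplus\sa_Q^{R*}$, and it does not.
\begin{itemize}
\item Write $\lambda=(\xi_{V_P}+\hsr)|_{\sa_P^R}=\sum_{\al\in\D_P^R}c_\al\al$.
\item Restriction to $\sa_Q^R$ keeps exactly the coordinates $c_\al$ with $\al\notin\D_P^Q$.
\item Restriction to $\sa_P^Q$ mixes those coordinates in as well, because each $\al\in\D_P^R\setminus\D_P^Q$ restricts to an element of $-\sa_P^{Q*+}$ that is in general nonzero.
\end{itemize}

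\textbf{A counterexample.} Take $G=SL_3$, $V_R=\CC$, $\eta=\mu$, $P=P_0$ and $\D_{P_0}^Q=\{\al_1\}$. Let $V_P\subseteq H^2(\n_{P_0};\CC)$ be the summand with $\xi_{V_P}+\hsr=-\al_2$.
\begin{itemize}
\item Since $-\al_2\in-\lsp+\sa_{P_0}^{G*}$, $V_P$ occurs in $H^4(i_{P_0}^!\W^\mu\C(\CC))$.
\item But $\langle-\al_2,\al_1\spcheck\rangle=1>0$, so your right-hand side does not contain $V_P$.
\item Likewise your formula puts $V_P$ into $\type^3_{Q,V_P}(\W^\mu\C(\CC))$. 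In fact \eqref{eqnCompareLocalCohomology} for $P_0\le Q\le G$ shows this type is $0$: neither $H(i_{P_0}^*\W^\mu\C(\CC))$ nor the Mayer--Vietoris term contains $V_P$, because $c_{\al_1}=0$.
\item The same example shows that the pieces you propose to import as a tiling, $\{\lambda|_{\sa_P^Q}\in\Int\lsp+\sa_P^{Q*},\ \lambda|_{\sa_Q^R}\in-\lsp+\sa_Q^{R*}\}$, overlap. The weight $-\al_2$ lies both in the piece for $P_0$ and in the piece for $Q$.
\end{itemize}

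\textbf{What is needed.} You need a correct computation of $\type_{Q,V_P}$ for intermediate $Q$, obtained by using \eqref{eqnCompareLocalCohomology} twice.
\begin{itemize}
\item For $P\le P\le Q$, the Fary spectral sequence involves only the known groups $H(i_{\widetilde P}^!\W^\eta\C(V_R))$.
\item For $P\le Q\le R$, the Mayer--Vietoris spectral sequence involves only the known groups $H(i_{\widetilde P}^*\W^\eta\C(V_R))$.
\item By Lemma~\ref{lemProjectPartialOrder}, the intermediate $V_{\widetilde P}$ has $\D_{\widetilde P}^R$-coordinates $c_\al$ for $\al\notin\D_P^{\widetilde P}$.
\item The two sequences then give $\{\al:c_\al>0\}\subseteq\D_P^Q$ and $\D_P^Q\subseteq\{\al:c_\al>0\}$ respectively; for $\nu$, replace $>0$ by $\ge0$.
\end{itemize}
So a nonzero $\type_{Q,V_P}$ forces $\{\al:c_\al>0\}=\D_P^Q$. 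In other words, the first truncation is along $\sa^R_{(P,Q)\cap R}$, not along $\sa_P^Q$.

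\textbf{Finishing the argument.} The condition $Q\in[Q^R_{V_P},Q^{\prime R}_{V_P}]$ gives $\langle\lambda,\al\spcheck\rangle\le0$ for $\al\in\D_P^Q$ and $\ge0$ for $\al\notin\D_P^Q$. Hence $c_\al\langle\lambda,\al\spcheck\rangle\le0$ for every $\al$, so $(\lambda,\lambda)\le0$ for a Weyl-invariant inner product. Therefore $\lambda=0$, which is $V_P\preccurlyeq_0V_R$. Then $Q=P$ for $\mu$ (resp.\ $Q=R$ for $\nu$), and the value is read off from \eqref{eqnLocalWCCohomologyWithSupports} (resp.\ \eqref{eqnLocalWCohomology}).

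Your final conclusion happens to be right, because your incorrect nonvanishing set contains the true one. Nothing in your argument justifies that containment, however. As written, the proof has a gap at exactly the step you treated as routine.
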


\begin{cor}
  \label{corEtamSofWetaC}
  For $\eta$ a middle weight profile,
  \[
  \etawmS(\W^\eta\C(V_R))= \{V_R\} \quad\text{and}\quad \type_{\eta,V_R}(\W^\eta\C(V_R))=V_R\ .
  \]
\end{cor}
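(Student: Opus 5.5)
We derive the corollary directly from Proposition~\ref{propMicroSupportWeightedCohomology}, treating $\eta=\mu$ and $\eta=\nu$ separately. Throughout, $W=\Xhat_R$, so $S=R$ and the relevant groups are $Q_V^R$ and $Q_V^{\prime R}$.

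\emph{Reduction.} By definition, $\type_{\mu,V_P}=\type_{Q_{V_P}^{\prime R},V_P}$ and $\type_{\nu,V_P}=\type_{Q_{V_P}^{R},V_P}$. The proposition shows that every $V_P$ in the weak micro-support satisfies $V_P\preccurlyeq_0 V_R$. For such $V_P$, the type is nonzero only for the single value $Q=P=Q_{V_P}^R$ (case $\mu$) or $Q=R=Q_{V_P}^{\prime R}$ (case $\nu$). So the task is to decide when this distinguished $Q$ coincides with the $Q$ used to define $\type_\eta$.

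\emph{Case $\mu$.} We need $Q_{V_P}^{\prime R}=P$; the proposition already provides $Q_{V_P}^R=P$. Suppose $V_P\preccurlyeq_0 V_R$ with $P<R$. Condition (iii) for $\preccurlyeq_-$ gives $(\xi_{V_P}+\hsr)|_{\sa_P^R}\in-\lsp+\sa_P^{R*}$. Since this element is nonzero in the negative root cone, pairing with the coroots should give some $\al\in\D_P^R$ with $\langle\xi_{V_P}+\hsr,\al\spcheck\rangle\le0$, so $Q_{V_P}^{\prime R}\ne P$.

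The cleaner route avoids this computation. Since $Q_{V_P}^{\prime R}\ne P$, the nonvanishing $Q=P$ type is not a $\mu$-type, so $V_P\notin\muwmS$. If instead $\xi_{V_P}+\hsr$ restricts to $0$ on $\sa_P^R$, then every $\al\in\D_P^R$ lies in $\D_P^{Q'}$, so $Q_{V_P}^{\prime R}=R\ne P$ once again. Either way, when $P<R$ we get $Q_{V_P}^{\prime R}\ne P$, and hence $\type_{\mu,V_P}=0$.

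For $P=R$ and $V_P=V_R$, all the groups collapse to $R$, and \eqref{eqnwmSForWmuC} gives $\type_{\mu,V_R}=V_R[0]=V_R$ because $[V_R:V_R]=0$ and $\#\D_R^R=0$.

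\emph{Case $\nu$.} This is symmetric. The type is nonzero only at $Q=R=Q_{V_P}^{\prime R}$, and we need $Q_{V_P}^R=R$, meaning $\langle\xi_{V_P}+\hsr,\al\spcheck\rangle<0$ for all $\al\in\D_P^R$. This would put $(\xi_{V_P}+\hsr)|_{\sa_P^R}$ in the strict negative dominant cone, hence in $-\Int\lsp+\sa_P^{R*}$ by the Borel--Wallach inclusion used in Lemma~\ref{lemQInequalityImpliesSignedOrder}. That contradicts $V_P\preccurlyeq_+V_R$, since the element cannot lie in both $\lsp+$ and $-\Int\lsp+$ unless $P=R$.

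\emph{Main obstacle.} The delicate step is the $\mu$ case. There one must show that $Q_{V_P}^{\prime R}=P$ together with $\preccurlyeq_-$ forces $P=R$. Here $Q'=P$ says every $\al$ pairs strictly positively, so the element lies in the interior of the dominant cone, hence in $\Int\lsp+$. This is incompatible with membership in $-\lsp+$ when $\D_P^R\neq\emptyset$.

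Finally, $V_R$ itself lies in the weak $\eta$-micro-support, since by the computation above its type equals $V_R\ne0$.
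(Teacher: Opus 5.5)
Your proof is correct and follows the paper's route. You use Proposition~\ref{propMicroSupportWeightedCohomology} to reduce to comparing the single $Q$ at which the type is nonzero with $Q_{V_P}^{\prime R}$ (for $\mu$) or $Q_{V_P}^{R}$ (for $\nu$). The paper notes directly that $\lsp+\sa_P^{R*}\cap-\lsp+\sa_P^{R*}=\{0\}$, so $V_P\preccurlyeq_0 V_R$ forces $(\xi_{V_P}+\hsr)|_{\sa_P^R}=0$, hence $Q_{V_P}^R=P$, $Q_{V_P}^{\prime R}=R$, and a nonzero $\eta$-type requires $P=R$. Consequently your ``nonzero element of the negative root cone'' subcase is vacuous, and the Borel--Wallach cone argument is valid but unnecessary.
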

\begin{proof}
If $V_P \preccurlyeq_0 V_R$ then $Q_{V_P}^R=P$ and  $Q_{V_P}^{'R}=R$.  By definition (see \S\ref{ssectMicroSupport}) $V_P$ contributes to $\muwmS$ when \eqref{eqnwmSForWmuC} is nonzero for $Q= Q_{V_P}^{'R}$.  By the proposition this means $Q_{V_P}^R=Q_{V_P}^{'R}$ and therefore $P=R$.  We similarly treat $\nuwmS$.
\end{proof}

Let $W^\eta H(\Xhat_R;\VV_R) = H(\Xhat_R;\WnC(V_R))$ be the global weighted cohomology for weight profile $\eta$.

\begin{cor}
  \label{corWeightedCohomologyVanishing}
For $\eta$ a middle weight profile, $W^\eta H(\Xhat_R;\VV_R) = 0$ if $(V_R|_{M_R})^* \not\cong \overline{V_R|_{M_R}}$.
\end{cor}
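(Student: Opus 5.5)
The plan is to combine Corollary~\ref{corEtamSofWetaC} with the vanishing theorem, Theorem~\ref{thmVanishingTheorem}. By definition $W^\eta H(\Xhat_R;\VV_R)$ is the hypercohomology of $\Sh_{\Xhat_R}(\WnC(V_R))$. The admissible set $W=\Xhat_R$ has unique maximal stratum $X_R$, so the theorem applies to $\L$-modules on $\Xhat_R$. It therefore suffices to show that $\mS_\eta(\WnC(V_R))=\emptyset$ under the hypothesis $(V_R|_{M_R})^*\not\cong\overline{V_R|_{M_R}}$.

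First, Corollary~\ref{corEtamSofWetaC} gives $\etawmS(\WnC(V_R))=\{V_R\}$, where $\eta=\mu$ or $\nu$. Recall from \S\ref{ssectMicroSupport} that $\mS_\eta(\M)$ is the subset of $\etawmS(\M)$ consisting of those $V\in\IrrRep(L_P)$ which satisfy the conjugate self-contragredient condition $(V|_{M_P})^*\cong\overline{V|_{M_P}}$. The only candidate is $V=V_R$ with $P=R$. By hypothesis $V_R$ fails this condition, so $\mS_\eta(\WnC(V_R))=\emptyset$. Theorem~\ref{thmVanishingTheorem}, applied with the emptiness of $\mS_\mu$ or $\mS_\nu$ as appropriate, then gives $H(\Xhat_R;\Sh(\WnC(V_R)))=0$.

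There is no real obstacle; the only point needing care is bookkeeping. The micro-support is computed relative to $W=\Xhat_R$ with $S=R$, which is exactly the setting of Proposition~\ref{propMicroSupportWeightedCohomology} and its corollary. Also, $\eta$ must be one of the two middle profiles so that the $\eta$-version of the vanishing theorem matches the $\eta$-version of the micro-support computation.
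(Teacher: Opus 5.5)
Your proposal is correct and follows essentially the same route as the paper. The paper also uses Corollary~\ref{corEtamSofWetaC} together with the failure of the conjugate self-contragredient condition for $V_R$ to get $\mS_\eta(\W^\eta\C(V_R))=\emptyset$, and then applies the vanishing Theorem~\ref{thmVanishingTheorem}.
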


\begin{proof}
Since $V_R$ fails the conjugate self-contragradient condition, Corollary ~\ref{corEtamSofWetaC} implies $\mS_\eta(\W^\eta\C(V_R)) =\emptyset$.  Now apply the vanishing Theorem ~\ref{thmVanishingTheorem}.
\end{proof}

\section{Morphisms to and from weighted cohomology}
\label{sectMorphismsWithWeightedCohomology}

We consider $W$ an admissible subset with a unique maximal stratum $X_S$.  The following proposition gives conditions on when a morphism to or from $\W^\eta\C(V_R)$ can be built up in the homotopy category starting with a morphism on $X_R$.

\begin{prop}\label{propMappingWC}
  Let $\M\in\K(\L_{W})$ and fix $\eta\in\sa^{G*}$ and $d\in\ZZ$.  Consider $R\in \Pl(W)$ and $V_R\in\IrrRep(L_R)$.
\begin{enumerate}[leftmargin=0em, labelsep=.5em, itemindent=2em]
\item\label{itemLeft} Let $\phi_R\colon V_R[-d] \to i_R^!\M$ be a morphism.  Assume that
\begin{equation}\label{eqnLocalCoVanishing}
\text{$H^{[V_R:V_P]+d+1}(i_P^!\M)_{V_P}=0$ for all $V_P\prec V_R$ such that $\xi_{V_P}^R\ge \eta^R_P$.}
\end{equation}
  Then there exists a morphism
\[
  \phi\colon \ihat_{R!}\W^\eta\C(E_R)[-d]\to \M
\]
extending $\phi_R$.  The extension is unique if $H^{[V_R:V_P]+d}(i_P^!\M)_{V_P}=0$ for all $V_P$ as in \eqref{eqnLocalCoVanishing}.
\item\label{itemRight} Let $\psi_R\colon i_R^*\M \to E_R[-d]$ be a morphism.  Assume that
\begin{equation}\label{eqnLocalVanishing}
\text{$H^{[V_R:V_P]+\#\D_P^R+d-1}(i_P^*\M)_{V_P}=0$ for all $V_P\prec V_R$ such that $\xi_{V_P}^R < \eta^R_P$.}
\end{equation}
  Then there exists a morphism
  \[
  \psi\colon \M\to \ihat_{R*}\W^\eta\C(E_R)[-d]
  \]
  extending $\psi_R$.  The extension is unique if $H^{[V_R:V_P]+\#\D_P^R+d}(i_P^*\M)_{V_P}=0$ for all $V_P$ as in \eqref{eqnLocalVanishing}.
\end{enumerate}
\end{prop}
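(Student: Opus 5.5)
Throughout, $E_R$ in the statement is understood to be $V_R$. We prove \ref{itemLeft}; part \ref{itemRight} is the formal dual, obtained by reversing arrows and using $i^*$ and the local cohomology \eqref{eqnLocalWCohomology} in place of $i^!$ and \eqref{eqnLocalWCCohomologyWithSupports}. Since $\ihat_{R!}$ is left adjoint to $\ihat_R^!$ (\S\ref{ssectFunctors}), a morphism $\ihat_{R!}\W^\eta\C(V_R)[-d]\to\M$ is the same as a morphism $\W^\eta\C(V_R)[-d]\to \ihat_R^!\M$ in $\K(\L_{\Xhat_R\cap W})$. Replacing $\M$ by $\ihat_R^!\M$, which leaves $i_P^!\M$ unchanged for $P\le R$, we may therefore assume $W=\Xhat_R\cap W$ has unique maximal stratum $X_R$. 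Write $\mathcal W=\W^\eta\C(V_R)[-d]$.

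We induct over the strata, adding them in order of decreasing parabolic. Enumerate $\Pl(W)\setminus\{R\}$ as $Q_1,\dots,Q_N$ with $Q_i<Q_j\Rightarrow i<j$. Let $U_k$ be the open union of $X_R$ and the $X_{Q_i}$ with $i>k$, and let $u_k\colon U_k\hookrightarrow W$ be the inclusion. We will build compatible morphisms $\phi^{(k)}\colon u_k^*\mathcal W\to u_k^*\M$, starting from $\phi^{(N)}=\phi_R$ on $X_R$. For the step from $k$ to $k-1$, set $P=Q_k$, which is minimal, hence closed, in $U_{k-1}$, and apply the distinguished triangle \eqref{eqnFirstDistinguishedTriangle} on $U_{k-1}$ to $\M$:
\[
i_{P!}i_P^!\M\to\M\to j_{P*}j_P^*\M\to i_{P!}i_P^!\M[1].
\]
Applying $\Hom(\mathcal W,\cdot)$ to this triangle, and using the adjunction $j_P^*\dashv j_{P*}$ together with $i_{P!}=i_{P*}$ and $i_P^*\dashv i_{P*}$, gives an exact sequence
\begin{multline*}
\Hom(i_P^*\mathcal W,\,i_P^!\M)\to \Hom(\mathcal W,\M)\to \Hom(j_P^*\mathcal W,\,j_P^*\M)\\
\to \Hom(i_P^*\mathcal W,\,i_P^!\M[1]).
\end{multline*}
The given morphism $\phi^{(k)}$ lies in the third term. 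It lifts to some $\phi^{(k-1)}$, and then uniquely if in addition the first term vanishes, provided its image in the fourth term is zero. The adjunctions are valid because $j_P$ and $i_P$ are the open and closed inclusions in $U_{k-1}$.

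The vanishing needed is a local statement on the single stratum $X_P$, where $\L$-modules are complexes of $L_P$-modules and $\Mod(L_P)$ is semisimple. There a homotopy class of maps between complexes is just a graded map on cohomology, so
\[
\Hom(i_P^*\mathcal W,\,i_P^!\M[m])=\bigoplus_{V}\Hom_{\Graded}\bigl(H(i_P^*\mathcal W)_{V},\,H(i_P^!\M)_{V}[m]\bigr).
\]
By \eqref{eqnLocalWCohomology}, $H(i_P^*\mathcal W)=\tau^{\ge\eta^R}H(\n_P^R;V_R)[-d]$. By Kostant's theorem \eqref{eqnKostant} the irreducible constituents of $H(\n_P^R;V_R)$ are exactly the $V_P\prec V_R$, each of multiplicity one and placed in degree $[V_R:V_P]$. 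After truncation, those that survive are the $V_P$ with $\xi^R_{V_P}\ge\eta_P^R$, sitting in degree $[V_R:V_P]+d$. The obstruction group ($m=1$) is therefore a sum over these $V_P$ of $\Hom\bigl(V_P,\,H^{[V_R:V_P]+d+1}(i_P^!\M)_{V_P}\bigr)$, which vanishes by \eqref{eqnLocalCoVanishing}. The $m=0$ group is the analogous sum with $H^{[V_R:V_P]+d}$, and it vanishes under the uniqueness hypothesis.

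After $N$ steps we obtain $\phi=\phi^{(0)}$. It restricts to $\phi_R$ on $X_R$, since each step is compatible with the restriction to the previous open set. Uniqueness holds step by step in the homotopy category.

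The main obstacle is bookkeeping with signs and shifts. One must check that the adjunctions hold in the homotopy category $\K(\L_{U_{k-1}})$, and not just in $\Mod$; this reduces to the formulas for $j_*$, $i_*$ and $i^*$ in \S\ref{ssectFunctors}. One must also confirm that the degree in which $V_P$ occurs in $i_P^*\W^\eta\C(V_R)$ is exactly $[V_R:V_P]$. For \ref{itemRight} the same argument uses \eqref{eqnsecondDistinguishedTriangle} and $H(i_P^!\W^\eta\C(V_R))=\tau^{<\eta^R}H(\n_P^R;V_R)[-\#\D^R_P]$. The extra shift by $\#\D_P^R$ accounts for the degrees appearing in \eqref{eqnLocalVanishing}, and the shift by $-1$ in the obstruction degree comes from the connecting map going the other direction.
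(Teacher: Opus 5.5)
Your proposal is correct and is essentially the paper's argument: a stratum-by-stratum induction over $\Xhat_R$ using the triangle \eqref{eqnFirstDistinguishedTriangle}. In both, the obstruction group (resp.\ the uniqueness group) is identified as a graded morphism group on $X_P$ from $H(i_P^*\W^\eta\C(V_R))[-d]$ to $H(i_P^!\M)[1]$ (resp.\ to $H(i_P^!\M)$), and it is killed by \eqref{eqnLocalWCohomology} together with the hypothesis.

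The differences are only in packaging. You first pass to $\Xhat_R\cap W$ via the adjunction $\ihat_{R!}\dashv\ihat_R^!$, which is true and immediate from the definitions, though it is not among the adjunctions listed in \S\ref{ssectFunctors}; you then use the long exact sequence of $\Mor$ groups. The paper instead works on open sets $U\supseteq X_R$ of $W$ and applies the morphism-of-triangles criterion of Beilinson--Bernstein--Deligne.
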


\begin{rem}\label{remMappingWCMiddleProfileConditions}
In this paper we only need the case where $\eta$ is a middle weight profile $\mu$ or $\nu$.  By \eqref{eqnGreaterThanMu} and \eqref{eqnStrictlyLessThanNu}, when $\eta=\mu$ the condition on $V_R$ in \eqref{eqnLocalCoVanishing} is $V_R \succ_{+,\str} V_P$ and when $\eta=\nu$ the condition on $V_R$ in \eqref{eqnLocalVanishing} is $V_P \prec_{-,\str} V_R$.
\end{rem}

\begin{proof}
  We prove \ref{itemLeft}; \ref{itemRight} is similar.  Consider any open admissible set $U \subseteq W$ which contains $X_R$ and let $k_U\colon U\hookrightarrow W$ be the inclusion.   We will prove that $\phi_R$ extends to $\phi_{U}\colon k_U^!\ihat_{R!}\W^\eta\C(E_R)[-d] \to k_U^!\M$ for any such $U$ by induction on $\#\Pl(U\cap\Xhat_R)$.  The case $\#\Pl(U\cap\Xhat_R)=1$ is simply the existence of $\phi_R$.  In general let $X_P$ be a minimal stratum in $U\cap\Xhat_R$.  Thus $X_P$ is closed in $U$ and we let $i_P\colon X_P\hookrightarrow U$ and $j_P\colon U\setminus X_P\hookrightarrow U$ be the inclusions.   By induction we can assume that $\phi_R$ extends to $\phi_{U\setminus X_P}\colon j_P^*k_U^!\ihat_{R!}\W^\eta\C(E_R)[-d] \to j_P^*k_U^!\M$.  Consider the diagram where the two rows are distinguished triangles from \eqref{eqnFirstDistinguishedTriangle}:
  \begin{equation*}
    \xymatrix@-5mm{
      {k_U^!\ihat_{R!}\W^\eta\C(E_R)[-d]} \ar[r]^-{u} \ar@{.>}[d]_{\phi_U} &
      {j_{P*}j_P^{*}k_U^!\ihat_{R!}\W^\eta\C(E_R)[-d]} \ar[r] \ar[d]_{j_{P*}(\phi_{U\setminus X_P})} &
      {i_{P!}i_P^!k_U^!\ihat_{R!}\W^\eta\C(E_R)[-d+1]} \ar@{.>}[d] \\
      {k_U^!\M} \ar[r] &
      {j_{P*}j_P^{*}k^!_U\M} \ar[r]^{v'} &
      {i_{P!}i_P^!k^!_U\M[1]}
    }
  \end{equation*}
  By \cite[Prop.~1.1.9]{refnBeilinsonBernsteinDeligne} the extension $\phi_U$ exists if and only if $v'j_{P*}(\phi_{U\setminus X_P})u=0$ in
  \begin{multline*}  \Mor_{\K(\L_U)}(k_U^!\ihat_{R!}\W^\eta\C(E_R)[-d],i_{P!}i_P^!k_U^!\M[1]) = \\ \Mor_{\Graded(L_P)}(H(i_P^*\W^\eta\C(E_R)[-d-1]),H(i_P^!\M))\ .
  \end{multline*}
  and is unique if
    \begin{multline*}  \Mor_{\K(\L_U)}(k_U^!\ihat_{R!}\W^\eta\C(E_R)[-d],i_{P!}i_P^!k_U^!\M) = \\ \Mor_{\Graded(L_P)}(H(i_P^*\W^\eta\C(E_R)[-d]),H(i_P^!\M)) = 0\ .
  \end{multline*}
However by \eqref{eqnLocalWCohomology} the irreducible constituents of $H(i_P^*\W^\eta\C(E_R))$ shifted by $d+1$ and $d$ are precisely those we assume in \eqref{eqnLocalCoVanishing} to vanish in $H(i_P^!\M)$.
\end{proof}

\section{Preparatory lemmas}
\label{sectPreparatoryLemmas}
Let $W\subseteq \Xhat$ be an admissible subset with a unique maximal stratum $X_S$.  Consider $\M\in\Mod(\L_W)$ and a middle weight profile $\eta$.  Corollary~ \ref{corEtamSofWetaC} suggests that the weighted cohomology ``building blocks'' needed to represent $\M$ as an iterated mapping cone are parametrized by $V_R\in\etawmS(\M)$.  Proposition ~\ref{propMappingWC} studied morphisms between $\M$ and $i_{R*}\WnC(V_R)$.  In the next section will be Theorem~\ref{thmRemoveExtremalMicroSupport}, our main technical result, which proves the existence of such morphisms.  We present here 
several lemmas needed for this theorem.

\begin{lem}\label{lemMicrosupportOfPullback}
  Let $\M\in\Mod(\L_W)$, $V_R\in\IrrRep(L_R)$ for $R\in\Pl(W)$, and $d\in\mathbb Z$.
  \begin{enumerate}[leftmargin=0em, labelsep=.5em, itemindent=2em]
\item\label{itemMicrosupportOfProperPullback}
  If $V_P\in \wmS(\ihat_R^!\M)$ with $H^d(i_P^*\ihat_Q^!\ihat_R^!\M)_{V_P}\neq0$ for some $Q\in[Q_{V_P}^R,Q_{V_P}^{\prime R}]$ then there exists $V_{\widetilde P}\in \muwmS(\M)$ such that  $V_{P} \preccurlyeq_{-} V_{\widetilde P}$ and $\type_{\mu,V_{\widetilde P}}^{\tilde d}(\M)\neq0$ for some $\tilde d\le -[V_{\widetilde P}:V_{P}]+d$.
\item\label{itemMicrosupportOfPullback}
  If $V_P\in \wmS(\ihat_R^*\M)$ with $H^d(i_P^*\ihat_Q^!\ihat_R^*\M)_{V_P}\neq 0$ for some $Q\in[Q_{V_P}^R,Q_{V_P}^{\prime R}]$ then there exists $V_{\widetilde P}\in \nuwmS(\M)$ such that  $V_{\widetilde P} \succcurlyeq_{+} V_{P}$ and $\type_{\nu,V_{\widetilde P}}^{\tilde d}(\M)\neq0$ for some $\tilde d\le -[V_{\widetilde P}:V_{P}]+d$.
\end{enumerate}
\end{lem}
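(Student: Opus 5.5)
Both parts will be proved by one downward induction; I describe \ref{itemMicrosupportOfProperPullback} and treat \ref{itemMicrosupportOfPullback} dually. Note first that $i_P^*\ihat_Q^!\ihat_R^!\M = i_P^*\ihat_Q^!\M$ for $Q\le R$, because $\ihat_Q^!\ihat_R^!=\ihat_Q^!$. The hypothesis therefore says that $H^d(i_P^*\ihat_Q^!\M)_{V_P}\neq 0$ for some $Q\in[Q_{V_P}^R,Q_{V_P}^{\prime R}]$. What we need is nonvanishing of $H(i_{\widetilde P}^*\ihat_{\widetilde Q}^!\M)_{V_{\widetilde P}}$ with $\widetilde Q = Q^{\prime S}_{V_{\widetilde P}}$. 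The natural step is to enlarge the support from $Q$ to $Q' := Q_{V_P}^{\prime S}$ (note $Q\le Q^{\prime R}_{V_P}\le Q_{V_P}^{\prime S}$, and $Q^{\prime R}_{V_P}=Q^{\prime S}_{V_P}\cap R$) using the long exact sequence \eqref{eqnCompareLocalCohomology} with this $Q\le Q'$.

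I will induct on $\#\Pl$-size, say on $\dim\sa_P$ decreasing, or equivalently on the length of chains above $P$. There are two cases.

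\emph{Case 1:} the $V_P$-component of $H^d(i_P^*\ihat_Q^!\M)$ maps nontrivially to $H^d(i_P^*\ihat_{Q'}^!\M)_{V_P}$. Then $\type^d_{\mu,V_P}(\M)\ne0$, and we take $V_{\widetilde P}=V_P$ and $\tilde d=d$.

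\emph{Case 2:} otherwise, the element comes from $H^{d-1}$ of the third term, $i_P^*\jhat_{Q*}\jhat_Q^*\ihat_{Q'}^!\M$, in the $V_P$-isotypic part. I would use the Fary spectral sequence \eqref{eqnFarySpectralSequence}, whose $E_1$ involves $H(\n_P^{\widetilde P};H(i_{\widetilde P}^*\ihat^!_{\widetilde P\vee Q}\M))$ for $P<\widetilde P\le P'=(P,Q)\cap Q'$. Nonvanishing of the $V_P$ part then gives some $\widetilde P$ and some $V_{\widetilde P}$ with $V_P=H^{\l(w)}(\n_P^{\widetilde P};V_{\widetilde P})_w$, so that $V_P\prec V_{\widetilde P}$, and with $H^{d-1-[V_{\widetilde P}:V_P]}(i_{\widetilde P}^*\ihat^!_{\widetilde P\vee Q}\M)_{V_{\widetilde P}}\ne0$. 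Since $\widetilde P\le Q'=Q^{\prime S}_{V_P}$, Lemma~\ref{lemQInequalityImpliesSignedOrder} gives $V_P\preccurlyeq_- V_{\widetilde P}$.

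To apply induction to $V_{\widetilde P}$ we must check that $\widetilde P\vee Q$, intersected appropriately, lies in $[Q^S_{V_{\widetilde P}},Q^{\prime S}_{V_{\widetilde P}}]$, or at least lies in a range where the hypothesis of the lemma holds with $R$ replaced suitably (e.g.\ $R=S$). This is the main obstacle. One has to compute the sign pattern of $\langle\xi_{V_{\widetilde P}}+\hsr,\al\spcheck\rangle$ for $\al\in\D_{\widetilde P}$ from that of $V_P$. Lemma~\ref{lemProjectPartialOrder}\ref{itemCharacterProjection} gives the restriction to $\sa_{\widetilde P}$, but coroots of $\D_{\widetilde P}$ are not restrictions of coroots of $\D_P$. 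I expect to use $\widetilde P\le (P,Q)$, which means the simple roots in $\D_P^{\widetilde P}$ are disjoint from $\D_P^Q$, so that they are all roots where $V_P$ pairs nonnegatively. From this I would derive that $\D_{\widetilde P}^{\widetilde P\vee Q}$ consists of negative-pairing roots and that $Q'\cap$ relations persist, possibly invoking the tiling results of \cite{refnSaperTilings} as in \cite[\S9]{refnSaperLModules}. If $\widetilde P\vee Q$ fails to lie in the required interval, I would fall back on the Mayer--Vietoris spectral sequence \eqref{eqnMayerVietorisSpectralSequence}, whose terms have support $Q'$ fixed. Given membership, induction yields $V_{\widetilde{\widetilde P}}\in\muwmS(\M)$ with $V_{\widetilde P}\preccurlyeq_-V_{\widetilde{\widetilde P}}$ and degree $\le -[V_{\widetilde{\widetilde P}}:V_{\widetilde P}]+d-1-[V_{\widetilde P}:V_P]$. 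We then conclude using additivity of $[\,:\,]$ (Lemma~\ref{lemInterleaveRepresentation}) and transitivity of $\preccurlyeq_-$. Transitivity follows because the root cone condition adds along $\sa_P^{\widetilde P}\oplus\sa_{\widetilde P}$ via Lemma~\ref{lemProjectPartialOrder}\ref{itemCharacterProjection}, together with \cite[Remark~3.3]{refnSaperTilings}.
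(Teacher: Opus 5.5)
\textbf{Where the proof stops.} Several pieces of your plan for \ref{itemMicrosupportOfProperPullback} are sound. These are the identity $\ihat_Q^!\ihat_R^!=\ihat_Q^!$, enlarging the support from $Q$ to $Q'=Q^{\prime S}_{V_P}$, the Case~1/Case~2 split, the Fary spectral sequence, and Lemma~\ref{lemQInequalityImpliesSignedOrder} applied via $\widetilde P\le Q'$. Together they give $V_P\preccurlyeq_- V_{\widetilde P}$ with the right degree. This would be a self-contained alternative to the paper, which simply quotes \cite[Prop.~22.6]{refnSaperLModules} and then \cite[Prop.~9.2]{refnSaperLModules}. The first gives $V_{P_1}\in\wmS(\M)$ with $V_P\preccurlyeq_- V_{P_1}$; the second moves up along $\preccurlyeq_0$ into $\muwmS(\M)\cap\nuwmS(\M)$.

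However, the proposal stops exactly where the content lies. You never show that $V_{\widetilde P}$ again satisfies the hypothesis of the lemma, so the induction is not closed.

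Your heuristic for this step also points the wrong way. The roots in $\D_P^{\widetilde P}$ need not pair nonnegatively with $\lambda=\xi_{V_P}+\hsr$. Since $\D_P^Q\subseteq\D_P^R$, the roots of $\D_P^{Q'}\setminus\D_P^R$ are allowed in $\D_P^{\widetilde P}$, and $\lambda$ may be strictly negative on them. A nonnegativity statement would in any case give the wrong sign for what you need. The Mayer--Vietoris fallback faces the same sign question for the support $Q'$.

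\textbf{How to close the induction.} Take $R'=\widetilde P\vee Q$. Then $Q^{R'}_{V_{\widetilde P}}\le R'$ is automatic. Since $\D_P^{R'}=\D_P^{\widetilde P}\sqcup\D_P^Q$, you only need $\langle\xi_{V_{\widetilde P}}+\hsr,\tilde\beta\spcheck\rangle\le 0$ for the restrictions $\tilde\beta$ to $\sa_{\widetilde P}$ of the $\beta\in\D_P^Q$.

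The coroot $\tilde\beta\spcheck$ is the projection of $\beta\spcheck$ to $\sa_{\widetilde P}$ along $\sa_P^{\widetilde P}$. So $\tilde\beta\spcheck=\beta\spcheck-y$, where $y\in\sa_P^{\widetilde P}$ satisfies $\langle\gamma,y\rangle=\langle\gamma,\beta\spcheck\rangle\le0$ for all $\gamma\in\D_P^{\widetilde P}$, because off-diagonal pairings within $\D_P$ are nonpositive. The coweight cone lies in the coroot cone (this is dual to $\sa_P^{\widetilde P*+}\subseteq\lsp+\sa_P^{\widetilde P*}$). Hence $-y=\sum_{\gamma}e_\gamma\gamma\spcheck$ with all $e_\gamma\ge0$. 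By Lemma~\ref{lemProjectPartialOrder}\ref{itemCharacterProjection},
\[
\langle\xi_{V_{\widetilde P}}+\hsr,\tilde\beta\spcheck\rangle=\langle\lambda,\beta\spcheck\rangle+\sum_{\gamma\in\D_P^{\widetilde P}}e_\gamma\langle\lambda,\gamma\spcheck\rangle\le 0,
\]
because $\beta$ and every $\gamma$ lie in $\D_P^{Q'}$. The operative input is $\widetilde P\le Q'$, not $\widetilde P\le(P,Q)$.

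With this in hand, apply induction on $\#\D_P^S$ to $V_{\widetilde P}$, with $R'$, support $R'$, and degree $d-1-[V_{\widetilde P}:V_P]$. Transitivity of $\preccurlyeq_-$ and additivity of $[\,:\,]$ then finish \ref{itemMicrosupportOfProperPullback}.

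\textbf{Part (ii) is not a formal dual.} Here $\ihat_R^*$ does not cancel against $\ihat_Q^!$ the way $\ihat_R^!$ does. You must first rewrite $i_P^*\ihat_Q^!\ihat_R^*\M$ as local cohomology of $\M$ itself. The local product structure suggests supports on $\Xhat_{Q\vee((P,R)\cap S)}$, which lies above $Q^S_{V_P}$. Only then can you shrink the support to $Q^S_{V_P}$ by Mayer--Vietoris, using the reversed sign argument.
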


\begin{proof}
  For \ref{itemMicrosupportOfProperPullback} (resp.~ \ref{itemMicrosupportOfPullback})
  \cite[Prop.~22.6]{refnSaperLModules} implies there exists $V_{P_1} \in\wmS(\M)$ with $V_{P}\preccurlyeq_- V_{P_1}$ (resp.~  $V_{P_1}\succcurlyeq_+ V_{P}$) and $H^{d_1}(i_{P_1}^*\ihat_Q^!\M)_{V_{P_1}}\neq0$ for some $Q\in [Q_{V_{P_1}}^W,Q_{V_{P_1}}^{\prime W}]$ and $d_1\le d - [V_{P_1}:V_{P}]$.  Then by \cite[Prop. ~9.2]{refnSaperLModules} there exists $V_{\widetilde P}\in \muwmS(\M)\cap \nuwmS(\M)$ with $V_{P_1}\preccurlyeq_0 V_{\widetilde P}$ and $\Im (H^{\tilde d}(i_{\widetilde P}^*\ihat_{Q_{V_{\widetilde P}}^{W}}^!\M)_{V_{\widetilde P}} \to H^{\tilde d}(i_{\widetilde P}^*\ihat_{Q_{V_{\widetilde P}}^{\prime W}}^!\M)_{V_{\widetilde P}})\neq 0$ for some $\tilde d\le - [V_{\widetilde P}:V_{P_1}] + d_1 \le -[V_{\widetilde P}:V_{P}]+d$.
\end{proof}

\begin{lem}\label{lemBased}
  Let $\M\in\Mod(\L_W)$ and let $\eta$ be a middle weight profile.  Say $V_R\in \etawmS(\M)$ and $\type_{\eta,V_R}^d(\M)\neq0$ for some $d$.  For $Q\in [R,S]$ consider the two natural morphisms from \eqref{eqnNaturalMorphisms}
  \[
  i_R^!\M \overset{\LeftNaturalMorphism}\longrightarrow i_R^*\ihat^!_{Q}\M \overset{\RightNaturalMorphism}\longrightarrow i_R^*\M \ .
\]
 \begin{enumerate}[leftmargin=0em, labelsep=.5em, itemindent=2em]
 \item\label{itemBasedLeftFails} If $\eta=\mu$, $Q=Q_{V_R}^{\prime W}$, and the map
   \begin{equation*}
     H^d(i_R^!\M)_{V_R}\xrightarrow{H(\LeftNaturalMorphism)} \type_{\mu,V_R}^d(\M)
   \end{equation*}
vanishes then there exists $V_{\widetilde R}\in\muwmS(\M)$ such that $V_R\prec_- V_{\widetilde R}$ and $\type_{\mu,V_{\widetilde R}}^{\tilde d}(\M)\neq0$ for some $\tilde d\le -[V_{\widetilde R}:V_R] + d$.
 \item\label{itemBasedRightFails}  If $\eta=\nu$, $Q=Q_{V_R}^{W}$, and the map
   \begin{equation*}
     \type_{\nu,V_R}^d(\M) \xrightarrow{H(\RightNaturalMorphism)} H^d(i_R^*\M)_{V_R}
   \end{equation*}
   vanishes then there exists $V_{\widetilde R}\in\nuwmS(\M)$ such that $V_{\widetilde R}\succ_+ V_{R}$ and $\type_{\nu,V_{\widetilde R}}^{\tilde d}(\M)\neq0$ for some $\tilde d< -[V_{\widetilde R}:V_R] + d$.
 \end{enumerate}
\end{lem}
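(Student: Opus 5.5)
We prove \ref{itemBasedLeftFails}; \ref{itemBasedRightFails} is dual, with $\ihat^*$, $\RightNaturalMorphism$ and Lemma~\ref{lemMicrosupportOfPullback}\ref{itemMicrosupportOfPullback} replacing $\ihat^!$, $\LeftNaturalMorphism$ and Lemma~\ref{lemMicrosupportOfPullback}\ref{itemMicrosupportOfProperPullback}.

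The plan is to read the failure of $H(\LeftNaturalMorphism)$ to be injective on the relevant class as the nonvanishing of a link term, and then feed this to Lemma~\ref{lemMicrosupportOfPullback}. Set $Q=Q_{V_R}^{\prime W}$ and apply the long exact sequence \eqref{eqnCompareLocalCohomology} with $P=R$, $R$ in the role of $Q$, and $Q$ in the role of $Q'$:
\[
H^d(i_R^!\M)\xrightarrow{H(\LeftNaturalMorphism)} H^d(i_R^*\ihat_Q^!\M)\to H^d(i_R^*\jhat_{R*}\jhat_R^*\ihat_Q^!\M)\to\cdots .
\]
Take $V_R$-isotypic parts. Since $H(\LeftNaturalMorphism)$ vanishes on this component and $\type^d_{\mu,V_R}(\M)\ne0$, the map $\type_{\mu,V_R}^d(\M)\to H^d(i_R^*\jhat_{R*}\jhat_R^*\ihat_Q^!\M)_{V_R}$ is injective, so the target is nonzero.

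Next apply the Fary spectral sequence \eqref{eqnFarySpectralSequence} (with $P'=(R,R)\cap Q=Q$). A nonzero abutment forces a nonzero $E_1$ term
\[
H^{d'}\bigl(\n_R^{\widetilde P};H^{d''}(i_{\widetilde P}^*\ihat_{\widetilde P}^!\M)\bigr)_{V_R}\neq0
\]
for some $R<\widetilde P\le Q$ with $d'+d''=d$; here $\widetilde P\vee R=\widetilde P$.

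By Kostant's theorem there is then $V_{\widetilde P}\in\IrrRep(L_{\widetilde P})$ with $V_R\prec V_{\widetilde P}$, $d'=[V_{\widetilde P}:V_R]$, and $H^{d''}(i_{\widetilde P}^!\M)_{V_{\widetilde P}}\neq0$. Because $\widetilde P\le Q=Q_{V_R}^{\prime W}$, Lemma~\ref{lemQInequalityImpliesSignedOrder} gives $V_R\prec_- V_{\widetilde P}$.

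The class $H^{d''}(i_{\widetilde P}^!\M)_{V_{\widetilde P}}$ equals $H^{d''}(i_{\widetilde P}^*\ihat_{\widetilde P}^!\M)_{V_{\widetilde P}}$, which is a $\widetilde P$-type of $\ihat_{\widetilde P}^!\M$ computed inside $\Xhat_{\widetilde P}$. Indeed, on $\Xhat_{\widetilde P}$ with maximal stratum $\widetilde P$ we have $Q_{V_{\widetilde P}}=Q'_{V_{\widetilde P}}=\widetilde P$, so $\widetilde P\in[Q^{\widetilde P}_{V_{\widetilde P}},Q'^{\widetilde P}_{V_{\widetilde P}}]$. Lemma~\ref{lemMicrosupportOfPullback}\ref{itemMicrosupportOfProperPullback} (with $R=P=\widetilde P$) then yields $V_{\widetilde R}\in\muwmS(\M)$ with $V_{\widetilde P}\preccurlyeq_- V_{\widetilde R}$ and $\type^{\tilde d}_{\mu,V_{\widetilde R}}(\M)\neq0$ for some $\tilde d\le d''-[V_{\widetilde R}:V_{\widetilde P}]$.

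To finish we compose the two relations. The indices add by Lemma~\ref{lemInterleaveRepresentation}, so $\tilde d\le d-[V_{\widetilde R}:V_R]$. We also need $V_R\prec_- V_{\widetilde R}$. For this, write $\xi_{V_R}+\hsr$ restricted to $\sa_R^{\widetilde R}=\sa_R^{\widetilde P}+\sa_{\widetilde P}^{\widetilde R}$. The $\sa_R^{\widetilde P}$ part lies in $-\lsp+\sa_R^{\widetilde P*}$. By Lemma~\ref{lemProjectPartialOrder}\ref{itemCharacterProjection}, the $\sa_{\widetilde P}^{\widetilde R}$ part equals $(\xi_{V_{\widetilde P}}+\hsr)|_{\sa_{\widetilde P}^{\widetilde R}}\in-\lsp+\sa_{\widetilde P}^{\widetilde R*}$.

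The main obstacle is showing that the sum of these two pieces lies in $-\lsp+\sa_R^{\widetilde R*}$. Since the root cone is generated by the simple roots, it suffices that the root cone of $\sa_{\widetilde P}^{\widetilde R*}$, embedded via extension by zero, and that of $\sa_R^{\widetilde P*}$ both lie in $\lsp+\sa_R^{\widetilde R*}$. I would try to obtain this from \cite[Remark~3.3(ii)]{refnSaperTilings} as in the proof of Lemma~\ref{lemNewPartialOrder}. If that fails, I would instead keep the pair $(V_{\widetilde P},V_{\widetilde R})$ separate and invoke \cite[Prop.~9.2]{refnSaperLModules} directly, as in the proof of Lemma~\ref{lemMicrosupportOfPullback}.

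For \ref{itemBasedRightFails} the dual argument runs the same way; the requested strict inequality on $\tilde d$ should come from the degree shift $[1]$ in the third term of the triangle.
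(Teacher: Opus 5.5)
Part (i) of your proposal is correct and is the paper's proof. Part (ii) has a gap: ``dual'' skips the one substantive change, and your account of the strict inequality is off.

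\textbf{Part (i).} Your steps match the paper's: the long exact sequence \eqref{eqnCompareLocalCohomology} for $R\le R\le Q^{\prime W}_{V_R}$, then the Fary spectral sequence with $P'=Q^{\prime W}_{V_R}$. Lemma~\ref{lemQInequalityImpliesSignedOrder} gives $V_R\prec_-V_{\widetilde P}$, and Lemma~\ref{lemMicrosupportOfPullback}\ref{itemMicrosupportOfProperPullback} is applied on $\Xhat_{\widetilde P}$. The composition step you flag as the main obstacle is one the paper simply asserts (``Finally $V_R\prec_- V_{\widetilde R}$''). Your cone decomposition settles it, and both inclusions hold:
\begin{enumerate}
\item The elements of $\D_R^{\widetilde P}\subseteq\D_R^{\widetilde R}$ already vanish on $\sa_{\widetilde P}^{\widetilde R}$, so the zero-extension of $\lsp+\sa_R^{\widetilde P*}$ lies in $\lsp+\sa_R^{\widetilde R*}$.
\item The zero-extension of $\lsp+\sa_{\widetilde P}^{\widetilde R*}$ lies in $\lsp+\sa_R^{\widetilde R*}$ by \cite[Remark~3.3(ii)]{refnSaperTilings}.
\end{enumerate}
So the fallback to \cite[Prop.~9.2]{refnSaperLModules} is unnecessary. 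One small point: before citing Lemma~\ref{lemInterleaveRepresentation} you need $V_R\prec V_{\widetilde R}$. This follows from $W_R^{\widetilde R}=W_R^{\widetilde P}W_{\widetilde P}^{\widetilde R}$ with additive lengths, as in \S\ref{ssectKostantsTheorem}.

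\textbf{Part (ii).} Mirroring (i) literally fails. For $R\le Q^W_{V_R}\le S$, the Fary spectral sequence for the link term produces local cohomology with supports, $H(i_{\widetilde P}^*\ihat^!_{\widetilde P\vee Q^W_{V_R}}\M)$. That is not the input Lemma~\ref{lemMicrosupportOfPullback}\ref{itemMicrosupportOfPullback} takes, which is $H(i_{R_1}^*\M)$. You must instead use the Mayer--Vietoris spectral sequence \eqref{eqnMayerVietorisSpectralSequence}. The argument then runs as follows.
\begin{enumerate}
\item Set $Q=Q^W_{V_R}$. Vanishing of $H^d(\RightNaturalMorphism)$ forces $H^{d-1}(i_R^*\jhat_{Q*}\jhat_Q^*\M)_{V_R}\neq0$.
\item Mayer--Vietoris then gives some $R<R_1\le(R,Q^W_{V_R})$ with $H^{d-\#\D_R^{R_1}}(\n_R^{R_1};H(i_{R_1}^*\M))_{V_R}\neq0$.
\item Hence $H^{d-[V_{R_1}:V_R]-\#\D_R^{R_1}}(i_{R_1}^*\M)_{V_{R_1}}\neq0$.
\item The \emph{second} clause of Lemma~\ref{lemQInequalityImpliesSignedOrder} gives $V_{R_1}\succ_+V_R$.
\end{enumerate}

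This bookkeeping also corrects your explanation of the strict inequality. The $-1$ from the connecting map is cancelled by the $+1$ in the shift $[-\#\D_R^{R_1}+1]$. Strictness comes from the surviving $-\#\D_R^{R_1}\le -1$, not from the triangle's shift alone. After that, Lemma~\ref{lemMicrosupportOfPullback}\ref{itemMicrosupportOfPullback} and the same cone argument as in (i), applied to $V_{\widetilde R}\succcurlyeq_+V_{R_1}\succ_+V_R$, finish the proof.
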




\begin{proof}
  We prove \ref{itemBasedLeftFails}.  The long exact sequence \eqref{eqnCompareLocalCohomology} with $P\le Q\le  Q'$ replaced by $R\le R\le Q^{\prime W}_{V_R}$ becomes
  \[
  \dots\to
  H^{d}(i_R^!\M)_{V_R} \xrightarrow{H(\LeftNaturalMorphism)} \type_{\mu,V_R}^d(\M) \to H^{d}(i_R^*\jhat_{R*}\jhat_R^*\ihat_{Q^{\prime W}_{V_R}}^!\M)_{V_R} \to \cdots\ .
  \]
Since $H(\LeftNaturalMorphism)=0$ the third term is nonzero.  The Fary spectral sequence \eqref{eqnFarySpectralSequence} for this term shows there exists $R<R_1\le Q^{\prime W}_{V_R}$ such that $ H^d(\n_R^{R_1};H(i_{R_1}^!\M))_{V_R}\neq0$.
  Thus $H^{d-[V_{R_1}:V_R]}(i_{R_1}^!\M)_{V_{R_1}}\neq0$ for some $V_{R_1}$ which by Lemma~\ref{lemQInequalityImpliesSignedOrder} satisfies $V_R\prec_- V_{R_1}$.
  This nonvanishing implies $V_{R_1}\in \wmS(\ihat_{R_1}^!\M)$ so by Lemma~\ref{lemMicrosupportOfPullback}\ref{itemMicrosupportOfProperPullback} there exists  $V_{\widetilde R}\in \muwmS(\M)$ with $V_{R_1}\preccurlyeq_- V_{\widetilde R}$ and $\type_{\mu,V_{\widetilde R}}^{\tilde d}(\M)\neq0$ for some $\tilde d\le - [V_{\widetilde R}:V_{R_1}] + (d -[V_{R_1}:V_R]) = -[V_{\widetilde R}:V_R]+d$.  Finally $V_R\prec_- V_{\widetilde R}$ as desired.
  
  We now prove \ref{itemBasedRightFails}.  The long exact sequence \eqref{eqnCompareLocalCohomology} with  $P\le Q\le  Q'$ replaced by $R\le Q^{W}_{V_R} \le S$ becomes
  \[
  \dots\to
  H^{d-1}(i_R^*\jhat_{Q^{W}_{V_R}*}\jhat_{Q^{W}_{V_R}}^*\ihat_{S}^!\M)_{V_R} \to  \type_{\nu,V_R}^d(\M) \xrightarrow{H^d(\RightNaturalMorphism)} H^{d}(i_R^*\M)_{V_R} \to  \cdots\ .
  \]
  Since $H^d(\RightNaturalMorphism)=0$ the first term is nonzero.  The Mayer-Vietoris spectral sequence \eqref{eqnMayerVietorisSpectralSequence} for this term shows there exists $R< R_1 \le (R,Q^{W}_{V_R})$ such that
  \[
  H^{d-\#\D_R^{R_1}}(\n_R^{R_1};H(i_{R_1}^* \M))_{V_R}\neq0\ .
  \]
  Thus $H^{d-[V_{R_1}:V_R]-\#\D_R^{R_1}}(i_{R_1}^* \M)_{V_{R_1}}\neq0$ for some $V_{R_1}\succ V_R$ and in fact $V_{R_1}\succ_+ V_{R}$ by Lemma~\ref{lemQInequalityImpliesSignedOrder}.
  This nonvanishing implies $V_{R_1}\in \wmS(\ihat_{R_1}^*\M)$ so the proof concludes similarly to part \ref{itemBasedLeftFails} by using Lemma~\ref{lemMicrosupportOfPullback}\ref{itemMicrosupportOfPullback}.
\end{proof}

\begin{lem}\label{lemLocalNonvanishing}
  Let $\M\in\Mod(\L_{W})$, $V_R\in\IrrRep(L_R)$ for $R\in\Pl(W)$, and $k\in\mathbb Z$.
\begin{enumerate}[leftmargin=0em, labelsep=.5em, itemindent=2em]
\item\label{itemLeftFails}
  If there exists $V_P\prec V_R $ such that
  \[
  H^{[V_R:V_P]+k}(i_P^!\M)_{V_P} \neq0 \text{ and } V_R \succ_{+,\str} V_P
  \]
  then there exists $V_{\widetilde P}\in \muwmS(\ihat_R^!\M)$ such that  $V_{R} \succ_{+,\str} V_{\widetilde P}$ and $\type_{\mu,V_{\widetilde P}}(\ihat_R^!\M)$ is nonzero in some degree $\le [V_R:V_{\widetilde P}]+k$.
\item\label{itemRightFails}
  If there exists $V_P\prec V_R$ such that
  \[
  H^{[V_R:V_P]+\#\D_P^R+k}(i_P^*\M)_{V_P} \neq0 \text{ and } V_P \prec_{-,\str} V_R
  \]
  then there exists $V_{\widetilde P}\in \nuwmS(\ihat_R^*\M)$ such that $V_{\widetilde P} \prec_{-,\str} V_R$ and $\type_{\nu,V_{\widetilde P}}(\ihat_R^*\M)$ is nonzero in some degree $\ge [V_R:V_{\widetilde P}]+\#\D_{\widetilde P}^R+k$.
\end{enumerate}
\end{lem}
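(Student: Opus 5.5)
We prove \ref{itemLeftFails}; part \ref{itemRightFails} is dual, with $i_P^*$, $\ihat_R^*$, $\prec_{-,\str}$ and Lemma~\ref{lemMicrosupportOfPullback}\ref{itemMicrosupportOfPullback} replacing $i_P^!$, $\ihat_R^!$, $\succ_{+,\str}$ and part~\ref{itemMicrosupportOfProperPullback}, and with the degree inequalities reversed.

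The plan is to regard the nonvanishing class as local cohomology of the $\L$-module $\N=\ihat_R^!\M$ on $\Xhat_R\cap W$, whose unique maximal stratum is $X_R$. Since $i_P^!\N=i_P^!\M$, the hypothesis says that $H^{[V_R:V_P]+k}(i_P^*\ihat_P^!\N)_{V_P}\neq0$. Set $d=[V_R:V_P]+k$.

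\emph{Step 1: show $V_P\in\wmS(\N)$.} The condition $V_R\succ_{+,\str}V_P$ says $(\xi_{V_P}+\hsr)|_{\sa_P^R}\in\Int\lsp+\sa_P^{R*}$. This does not immediately give $Q_{V_P}^R=P$, since the root cone is larger than the dominant cone. So we cannot directly take $Q=P$ in the definition of $\wmS$. Instead we would apply Lemma~\ref{lemMicrosupportOfPullback}\ref{itemMicrosupportOfProperPullback}, or rather the underlying result \cite[Prop.~22.6]{refnSaperLModules} on which it relies, directly to the nonvanishing of $H^d(i_P^!\N)_{V_P}$. That result produces $V_{P_1}\in\wmS(\N)$ with $V_P\preccurlyeq_-V_{P_1}$, the appropriate $Q$-type nonzero, and degree at most $d-[V_{P_1}:V_P]$. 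Here $V_{P_1}\preccurlyeq V_R$ and $[V_R:V_P]=[V_R:V_{P_1}]+[V_{P_1}:V_P]$ by Lemma~\ref{lemInterleaveRepresentation}.

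\emph{Step 2: upgrade to $\muwmS$.} Next apply \cite[Prop.~9.2]{refnSaperLModules}, as in the proof of Lemma~\ref{lemMicrosupportOfPullback}, to $\N$ to get $V_{\widetilde P}\in\muwmS(\N)$ with $V_{P_1}\preccurlyeq_0V_{\widetilde P}$. Its $\mu$-type is nonzero in degree
\[
\tilde d\le d-[V_{\widetilde P}:V_P]=k+[V_R:V_{\widetilde P}],
\]
which uses the additivity of $[\,:\,]$ along the chain $V_P\preccurlyeq V_{P_1}\preccurlyeq V_{\widetilde P}\preccurlyeq V_R$. This is exactly the required degree bound.

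\emph{Step 3: preserve the order relation (main obstacle).} We must show $V_R\succ_{+,\str}V_{\widetilde P}$ given $V_R\succ_{+,\str}V_P$. Lemma~\ref{lemProjectPartialOrder}\ref{itemProjectionOfPartialOrder} applies to the chain $V_P\preccurlyeq V_{\widetilde P}\preccurlyeq V_R$ and transfers the strict positive root-cone condition from $V_P$ to $V_{\widetilde P}$, since the projection $\sa_P^{R*}\to\sa_{\widetilde P}^{R*}$ preserves the interior of the root cone.

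The delicate point is that we need $\widetilde P<R$ to get the strict relation $\succ$ rather than merely $\succcurlyeq$. If $\widetilde P=R$, the relation $V_R\succ_{+,\str}V_{\widetilde P}$ would require a nonzero element in the interior of the cone in $\sa_R^{R*}=0$, which is impossible. So we must rule this case out. The case $\widetilde P=R$ means $V_{\widetilde P}=V_R$. The projection argument then puts $(\xi_{V_P}+\hsr)|_{\sa_P^{\widetilde P}}$ in $-\lsp+$, since $V_P\preccurlyeq_-V_{\widetilde P}$ via composition of the $\preccurlyeq_-$ and $\preccurlyeq_0$ steps. It is also in $\Int\lsp+$ by hypothesis. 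Together these give a contradiction, since $P<R$.

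I expect the hardest step to be making Step 1 correct: extracting the $\preccurlyeq_-$ relation from \cite[Prop.~22.6]{refnSaperLModules} when starting from local cohomology with supports rather than from an element of $\wmS$. The fallback is to run the Fary spectral sequence argument of Lemma~\ref{lemBased} directly on $\N$.
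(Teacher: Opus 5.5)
Your argument for part~\ref{itemLeftFails} is correct, but it takes a different route from the paper's. The paper runs a self-contained induction on $\#\D_P^R$. In the sequence \eqref{eqnCompareLocalCohomology} for $P\le P\le Q^{\prime R}_{V_P}$, either $H^i(i_P^!\M)_{V_P}$ maps nontrivially to the $\mu$-type, so $V_P$ itself lies in $\muwmS(\ihat_R^!\M)$, or the Fary spectral sequence moves the class to some $V_{P_1}$ with $P<P_1\le Q^{\prime R}_{V_P}<R$ and $k$ replaced by $k-1$. The strict condition is carried along by Lemma~\ref{lemProjectPartialOrder}\ref{itemProjectionOfPartialOrder}, and since every step stays below $Q^{\prime R}_{V_P}<R$, the case $\widetilde P=R$ never arises. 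You instead make one call to Lemma~\ref{lemMicrosupportOfPullback}\ref{itemMicrosupportOfProperPullback}, hence to Props.~9.2 and 22.6 of \cite{refnSaperLModules}. You then exclude $\widetilde P=R$ separately, correctly, because $V_P\preccurlyeq_-V_R$ is incompatible with $V_R\succ_{+,\str}V_P$.

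Two repairs are needed in part~\ref{itemLeftFails}:
\begin{enumerate}
\item Your Step~1 worry disappears once you note that $X_P$ is the top stratum of $\Xhat_P\cap\Xhat_R\cap W$. Hence $V_P\in\wmS(\ihat_P^!\N)$ with $Q=P$ and type $H(i_P^!\N)_{V_P}$. Lemma~\ref{lemMicrosupportOfPullback}\ref{itemMicrosupportOfProperPullback}, applied to $\N$ with $P$ in place of $R$, then gives $V_{\widetilde P}\in\muwmS(\N)$, $V_P\preccurlyeq_-V_{\widetilde P}$ and $\tilde d\le d-[V_{\widetilde P}:V_P]$ at once. This is exactly how the paper uses that lemma in Lemma~\ref{lemBased}.
\item $V_{\widetilde P}\preccurlyeq V_R$ does not follow from Lemma~\ref{lemInterleaveRepresentation} alone. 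You need the following consequence of Kostant's theorem: if $\lambda_P,\lambda_{\widetilde P}$ are the highest weights, then $\lambda_{\widetilde P}+\hsr$ is the unique $L_{\widetilde P}$-dominant element of $W^{\widetilde P}(\lambda_P+\hsr)$. So at most one $V_{\widetilde P}$ satisfies $V_P\preccurlyeq V_{\widetilde P}$, and it must be the interleaving one. The paper uses this tacitly as well.
\end{enumerate}

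The genuine gap is in part~\ref{itemRightFails}, which is not dual to~\ref{itemLeftFails} as far as degrees go. Lemma~\ref{lemMicrosupportOfPullback}\ref{itemMicrosupportOfPullback} gives, exactly as part~\ref{itemMicrosupportOfProperPullback} does, an \emph{upper} bound $\tilde d\le d-[V_{\widetilde P}:V_P]$. With $d=[V_R:V_P]+\#\D_P^R+k$, your route therefore yields only $\tilde d\le[V_R:V_{\widetilde P}]+\#\D_P^R+k$. Part~\ref{itemRightFails}, however, demands a nonzero type in some degree $\ge[V_R:V_{\widetilde P}]+\#\D_{\widetilde P}^R+k$. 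A lower bound $\tilde d\ge d-[V_{\widetilde P}:V_P]$ would suffice, but nothing you cite supplies one, so ``reversing the inequalities'' is unjustified.

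The existence of $V_{\widetilde P}\in\nuwmS(\ihat_R^*\M)$ with $V_{\widetilde P}\prec_{-,\str}V_R$ does go through dually. For the degree you need the paper's induction. Take \eqref{eqnCompareLocalCohomology} for $\ihat_R^*\M$ and $P\le Q^R_{V_P}\le R$, with $i=[V_R:V_P]+\#\D_P^R+k$. If the map $\type^i_{\nu,V_P}(\ihat_R^*\M)\to H^i(i_P^*\M)_{V_P}$ vanishes, then the third term is nonzero in degree $i$. The shift $[-\#\D_P^{P_1}+1]$ in the Mayer--Vietoris spectral sequence \eqref{eqnMayerVietorisSpectralSequence} turns this into a nonzero class in $H(i_{P_1}^*\M)_{V_{P_1}}$ of degree $[V_R:V_{P_1}]+\#\D_{P_1}^R+k+1$. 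So $k$ only increases, and stopping at the first stage where that map is nonzero gives the required lower bound.
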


\begin{proof}
  The hypothesis $V_R \succ_{+,\str} V_P$ in \ref{itemLeftFails} implies that  $(\xi_{V_P}+\hsr)\vert_{\sa_P^R} \in \Int \sa_P^{*R+}$ (see \ref{itemIntRootCone} in \S\ref{ssectPartialOrder}).  Thus $Q^{\prime R}_{V_P}<R$ since if $Q^{\prime R}_{V_P}=R$ then $(\xi_{V_P}+\hsr)\vert_{\sa_P^R} \in -\sa_P^{*R+}$, a contradiction.

We now use induction on $\#\D_P^R\ge 1$.  In the $\#\D_P^R=1$ case, the note above shows $Q^{\prime R}_{V_P}=P$.  This case follows if we set $V_{\widetilde P}\equiv V_P$.

If $\#\D_P^R> 1$ consider the long exact sequence \eqref{eqnCompareLocalCohomology} with $P\le Q\le Q'$ replaced by $P\le P\le Q^{'R}_{V_P}$:
\begin{equation}\label{eqnSSCosupportLES}
  \dots\to H^{i-1}(i_P^*\jhat_{P*}\jhat_P^*\ihat_{Q^{\prime R}_{V_P}}^!\M)_{V_P} \to
  H^{i}(i_P^!\M)_{V_P}
  \overset{f}\to  H^{i}(i_P^*\ihat_{Q^{\prime R}_{V_P}}^!\M)_{V_P} \to \cdots \ .
\end{equation}
Set $i=[V_R:V_P]+k$.  The middle term is nonzero by assumption.  If $f\neq 0$
then $V_P \in \muwmS(\ihat_R^!\M)$.
%
%
If however $f=0$ then the first term of \eqref{eqnSSCosupportLES} is nonzero.  The Fary spectral sequence \eqref{eqnFarySpectralSequence} abutting to this term 
implies there exists $P<P_1< R$ such that
$H^{[V_{R}:V_P]+k-1}(\n_P^{P_1}; H(i_{P_1}^! \M))_{V_P} \neq0$.  Thus
$H^{[V_R:V_{P_1}]+k-1}(i_{P_1}^! \M)_{V_{P_1}}\neq0$ for some $V_{P_1}$ satisfying
$V_R \succ_{+,\str} V_{P_1}$ by Lemma ~\ref{lemProjectPartialOrder}\ref{itemProjectionOfPartialOrder}.  Since $1\le \#\D_{P_1}^R < \#\D_P^R$ we are done by induction.

The proof of \ref{itemRightFails} is  similar.  We show that $(P,Q^{R}_{V_P})\cap R < R$ and then use induction.  Instead of \eqref{eqnSSCosupportLES} we use the long exact sequence \eqref{eqnCompareLocalCohomology} applied to $\ihat_R^*\M$
  \begin{equation}\label{eqnSSSupportLESsummary}
    \dots\to H^{i}(i_P^*\ihat_{Q^{R}_{V_P}}^!\ihat_R^*\M)_{V_P} \overset{g}\to   H^{i}(i_P^*\M)_{V_P}
    \to H^{i}(i_P^*\jhat_{Q^R_{V_P}*}\jhat_{Q^R_{V_P}}^*\ihat_R^*\M)_{V_P} \to  \cdots\ .
  \end{equation}
  The middle term is nonzero and the critical case is when $g=0$.  This implies that the last term is nonzero which we analyze by the Mayer-Vietoris as opposed to the Fary spectral sequence.  The result is that $H^{[V_R:V_{P_1}]+\#\D_{P_1}^R+k+1}(i_{P_1}^! \M)_{V_{\P_1}}\neq0$ for some $V_{P_1}$ satisfying $V_{P_1}\prec_{-,\str} V_R$ by Lemma ~\ref{lemProjectPartialOrder}\ref{itemProjectionOfPartialOrder}.  Since $\#\D_{P_1}^R < \#\D_P^R$ we are done by induction.
\end{proof}

\section{Eliminating micro-support}
\label{sectEliminatingMicroSupport}

Let $W\subseteq \Xhat$ is an admissible subset with a unique maximal stratum $X_S$.  Consider $\M\in\Mod(\L_W)$ and a middle weight profile $\eta$.  Suppose $V_R\in\etawmS(\M)$ has $\type_{\eta,V_R}(\M)\neq0$ in some degree $d$.  We will show that if $V_R$ is maximal respect to $\LprecLeft$ then there exists a morphism $\phi\colon  \ihat_{R!}\W^\mu\C(V_R)[-d]\to \M$ in the homotopy category which induces a nonzero map on $\type_{\mu,V_R}^d$.  Likewise if $V_R$ is minimal with respect to $\LprecRight$ there exists a morphism $\psi\colon \M \to \ihat_{R*}\W^\nu\C(V_R)[-d]$ which induces a nonzero map on $\type_{\nu,V_R}^d$.  As a result the mapping cone of these morphisms have  $\type_{\eta,V_R}^d$ strictly smaller than that of $\M$.

Note that the need to pass to the homotopy category is due to Proposition ~\ref{propMappingWC}.

\begin{thm}\label{thmRemoveExtremalMicroSupport}
  Let $\M\in\K(\L_W)$, $R\in\Pl(W)$, and $V_R\in\IrrRep(L_R)$.
  \begin{enumerate}[leftmargin=0em, labelsep=.5em, itemindent=2em]

  \item\label{itemMaxImpliesLeftMap}
    Assume $V_R$ is  maximal with respect to $\LprecLeft$ on $\muwmS(\M)$ and $d\in\ZZ$ is such that $\type_{\mu,V_R}^d(\M)\neq0$.  Then there exists a morphism
    \begin{equation*}
      \phi\colon \ihat_{R!}\W^\mu\C(V_R)[-d]\to \M
    \end{equation*}
    for which
    \[
    \type_{\mu,\widetilde V}(\cone(\phi)) =\begin{cases}
    \type_{\mu,V_R}(\M)/V_R[-d] & \text{if $\widetilde V = V_R$ and}\\
    \type_{\mu,\widetilde V}(\M) & \text{if $\widetilde V\neq V_R$}.
    \end{cases}
    \]
\item\label{itemMinImpliesRightMap}
  Assume $V_R$ is minimal with respect to $\LprecRight$ on $\nuwmS(\M)$ and $d\in\ZZ$ is such that $\type_{\nu,V_R}^d(\M)\neq0$.  Then there exists a morphism
  \begin{equation*}
    \psi\colon \M \to \ihat_{R*}\W^\nu\C(V_R)[-d]
  \end{equation*}
  for which
    \[
    \type_{\nu,\widetilde V}(\cone(\phi)) =\begin{cases}
    \type_{\nu,V_R}(\M)\ominus V_R[-d] & \text{if $\widetilde V = V_R$ and}\\
    \type_{\nu,\widetilde V}(\M) & \text{if $\widetilde V\neq V_R$}.
    \end{cases}
    \]
  \end{enumerate}
\end{thm}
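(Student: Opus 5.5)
We prove \ref{itemMaxImpliesLeftMap}; part \ref{itemMinImpliesRightMap} is the dual argument, with Proposition~\ref{propMappingWC}\ref{itemRight}, Lemma~\ref{lemBased}\ref{itemBasedRightFails} and Lemma~\ref{lemLocalNonvanishing}\ref{itemRightFails} replacing their left counterparts. The plan has three steps. First choose a morphism $\phi_R\colon V_R[-d]\to i_R^!\M$ on the single stratum $X_R$ whose composite with $H(\LeftNaturalMorphism)$ is nonzero in $\type_{\mu,V_R}^d(\M)$. Then extend $\phi_R$ to $\phi\colon \ihat_{R!}\W^\mu\C(V_R)[-d]\to\M$ using Proposition~\ref{propMappingWC}. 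Finally compute the types of $\cone(\phi)$ with the long exact sequence \eqref{eqnLESType} and Corollary~\ref{corEtamSofWetaC}.

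\emph{Step 1.} Suppose no such $\phi_R$ exists. Since $\Mod(L_R)$ is semisimple, the map $H^d(i_R^!\M)_{V_R}\to\type^d_{\mu,V_R}(\M)$ must then vanish. Lemma~\ref{lemBased}\ref{itemBasedLeftFails} produces $V_{\widetilde R}\in\muwmS(\M)$ with $V_R\prec_-V_{\widetilde R}$. That means $V_R\LprecLeft V_{\widetilde R}$ with $V_{\widetilde R}\ne V_R$, which contradicts maximality. So we may pick $\phi_R$ as a lift of a nonzero class in the image.

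\emph{Step 2.} We verify hypothesis \eqref{eqnLocalCoVanishing}. By Remark~\ref{remMappingWCMiddleProfileConditions}, it asks that $H^{[V_R:V_P]+d+1}(i_P^!\M)_{V_P}=0$ whenever $V_R\succ_{+,\str}V_P$. Suppose instead this group is nonzero for some such $V_P$. Lemma~\ref{lemLocalNonvanishing}\ref{itemLeftFails}, applied with $k=d+1$, gives $V_{\widetilde P}\in\muwmS(\ihat_R^!\M)$ with $V_R\succ_{+,\str}V_{\widetilde P}$. Lemma~\ref{lemMicrosupportOfPullback}\ref{itemMicrosupportOfProperPullback} then gives $V'\in\muwmS(\M)$ with $V_{\widetilde P}\preccurlyeq_- V'$.

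The chain $V_R\LprecLeft V_{\widetilde P}\LprecLeft V'$ follows from generating cases \ref{itemSucc} and \ref{itemPrec}. By maximality of $V_R$ we get $V'=V_R$. Lemma~\ref{lemNewPartialOrder} now gives the characters strictly increasing then weakly increasing back to the start, i.e.\ $\xi_{V_R}+\hsr\lneq\xi_{V'}+\hsr=\xi_{V_R}+\hsr$, a contradiction. So the extension $\phi$ exists. Uniqueness is not needed.

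\emph{Step 3.} This is the main obstacle: showing $\type_{\mu,\widetilde V}(\ihat_{R!}\W^\mu\C(V_R))$ equals $V_R[-d]$ for $\widetilde V=V_R$ and vanishes otherwise, and that the induced map on $\type_{\mu,V_R}^d$ is injective.
\begin{enumerate}
\item For $\widetilde V$ an $L_P$-module with $P\le R$ we have $Q^{\prime W}_{\widetilde V}\ge Q^{\prime R}_{\widetilde V}$. Using Proposition~\ref{propMicroSupportWeightedCohomology}, where the computation is concentrated at $Q=Q^R_{\widetilde V}=P$, one must show that the $\mu$-type for $W$ still vanishes unless $\widetilde V=V_R$. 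The plan is to use the Fary sequence \eqref{eqnFarySpectralSequence} to compare supports on $\Xhat_{Q'}$ and $\Xhat_{Q'\cap R}$. If this vanishing fails, a nonzero contribution would again produce an element of $\muwmS(\M)$ above $V_R$, contradicting maximality. I expect this to be the delicate bookkeeping.
\item For $\widetilde V=V_R$, the map on $\type^d_{\mu,V_R}$ factors through $\phi_R$ followed by $\LeftNaturalMorphism$, so it is nonzero on $V_R[-d]$ by the choice in Step 1. Hence it is injective.
\end{enumerate}
The long exact sequence \eqref{eqnLESType} then yields the stated quotient for $\widetilde V=V_R$ and isomorphisms $\type_{\mu,\widetilde V}(\cone(\phi))\cong\type_{\mu,\widetilde V}(\M)$ for $\widetilde V\neq V_R$.
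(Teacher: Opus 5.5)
Your Steps 1, 2 and 3(ii) follow the paper's proof. These are: nonvanishing of $H(\LeftNaturalMorphism)$ from Lemma~\ref{lemBased}\ref{itemBasedLeftFails} and maximality; the lift $\phi_R$; the verification of \eqref{eqnLocalCoVanishing} via Lemma~\ref{lemLocalNonvanishing}\ref{itemLeftFails} and Lemma~\ref{lemMicrosupportOfPullback}\ref{itemMicrosupportOfProperPullback}; the extension by Proposition~\ref{propMappingWC}\ref{itemLeft}; and the long exact sequence \eqref{eqnLESType}. You also make explicit two points the paper leaves tacit. One is your check via Lemma~\ref{lemNewPartialOrder} that the element $V'$ produced in Step 2 differs from $V_R$. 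The other is your remark that the induced map on $\type^d_{\mu,V_R}$ is $H(\LeftNaturalMorphism)\circ H(\phi_R)$ and hence injective.

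\textbf{The gap is Step 3(i).} You leave it as a plan, and the plan aims at the wrong target. Whether $\type_{\mu,\widetilde V}(\ihat_{R!}\W^\mu\C(V_R))$ vanishes for $\widetilde V\neq V_R$ is a property of the building block alone. $\M$ does not enter, so maximality of $V_R$ in $\muwmS(\M)$ cannot be the input, and no Fary spectral sequence argument is needed.

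The missing observation is a simple base change.
\begin{itemize}
\item If $\widetilde V\in\IrrRep(L_P)$ with $P\not\le R$, then $i_P^*\ihat_{R!}(\cdot)=0$, since no stratum $X_T$ with $T\ge P$ lies in $\Xhat_R$.
\item If $P\le R$, then $i_P^*\ihat_Q^!\ihat_{R!}\N$ involves only the data of $\N$ at those $T$ with $P\le T\le Q\cap R$. So it equals $i_P^*\ihat_{Q\cap R}^!\N$ computed on $\Xhat_R$.
\item Since $\D_P^{Q_{\widetilde V}^{\prime R}}=\D_P^{Q_{\widetilde V}^{\prime W}}\cap\D_P^R$, one has $Q_{\widetilde V}^{\prime W}\cap R=Q_{\widetilde V}^{\prime R}$.
\end{itemize}
Thus the $\mu$-type of $\ihat_{R!}\W^\mu\C(V_R)$ computed on $W$ coincides with its $\mu$-type computed on $\Xhat_R$. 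Corollary~\ref{corEtamSofWetaC} then gives $V_R[-d]$ for $\widetilde V=V_R$ and $0$ otherwise. Equivalently, Proposition~\ref{propMicroSupportWeightedCohomology} forces $P=Q^R_{\widetilde V}=Q^{\prime R}_{\widetilde V}=R$.

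This is exactly what the paper uses, tacitly, when it cites Corollary~\ref{corEtamSofWetaC} directly. The same identity, with $Q^W_{\widetilde V}\cap R=Q^R_{\widetilde V}$, handles part~(ii).
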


\begin{proof}
  To prove \ref{itemMaxImpliesLeftMap} we first demonstrate that the natural morphism $\LeftNaturalMorphism$ from \eqref{eqnNaturalMorphisms} induces a nonzero map on the $V_R$-isotypical part of cohomology:
  \begin{equation}
H^d(i_R^!\M)_{V_R} \xrightarrow{H(\LeftNaturalMorphism)} \type_{\mu,V_R}^d(\M) \ .
  \end{equation}
  For if this were not true then Lemma~ \ref{lemBased}\ref{itemBasedLeftFails} implies that $V_R\prec_- V_{\widetilde R}$ and hence  $V_R \LprecLeft V_{\widetilde R}$ for some $V_{\widetilde R}\in\muwmS(\M)$, contradicting the maximality of $V_R$.

Secondly we note that there exists $\phi_R\colon V_R[-d]\to i_R^!\M$ such that
  \begin{equation}\label{eqnAlphaMap}
V_R[-d] \xrightarrow{\;H(\phi_R)\;}  H^d(i_R^!\M)_{V_R}\xrightarrow{\;\;H(\LeftNaturalMorphism)\;\;} \type_{\mu,V_R}^d(\M)
  \end{equation}
  is nonzero.  This is clear since $H(\LeftNaturalMorphism)\neq0$ implies one can find a copy of $V_R$ in $H^d(i_R^!\M)_{V_R}$ which is not contained in $\ker H(\LeftNaturalMorphism)$ and lift it to the kernel of the complex $i_R^!\M$ in degree $d$.

  The third step is to prove \eqref{eqnLocalCoVanishing} is satisfied, that is, 
  \begin{equation}
    \label{eqnLocalCoVanishingInTheorem}
  H^{[V_R:V_P]+d+1}(i_P^!\N)_{V_P}= 0
\end{equation}
for all $V_P\prec V_R$ with $\xi_{V_P}^R\ge \mu^R_P$.  Note this last inequality is equivalent to $V_R\succ_{+,\str} V_P$  by \eqref{eqnGreaterThanMu}.  Thus if \eqref{eqnLocalCoVanishingInTheorem} fails for such a $V_P$ then
by Lemma~\ref{lemLocalNonvanishing}\ref{itemLeftFails} there exists $V_{P_1}\in \wmS(\ihat_R^!\M)$ with $V_R\succ_{+,\str} V_{P_1}$ and by Lemma~\ref{lemMicrosupportOfPullback}\ref{itemMicrosupportOfProperPullback} there exists $V_{\widetilde P}\in \muwmS(\M)$ with $V_{P_1}\preccurlyeq_- V_{\widetilde P}$.  Together this implies $V_R \LprecLeft V_{\widetilde P}$ which contradicts the maximality of $V_R$ in $\muwmS(\M)$.  Hence \eqref{eqnLocalCoVanishingInTheorem} holds.

The final step is to prove that that $\phi_R$ can be extended to a morphism
  \[
  \phi\colon \ihat_{R!}\W^\mu\C(V_R)[-d]\to \M\ .
  \]
This follows from Proposition ~\ref{propMappingWC}\ref{itemLeft} since 
\eqref{eqnLocalCoVanishing} holds by above.

  By Corollary ~\ref{corEtamSofWetaC}, 
  $\type_{\mu,V_R}(\ihat_{R!}\W^\mu\C(V_R)[-d])$ is the irreducible module $V_R[-d]$ and $\type_{\mu,\widetilde V}(\ihat_{R!}\W^\mu\C(V_R)[-d])=0$ for $\widetilde V\neq V_R$.  The  long exact sequence of $Q$-type \eqref{eqnLESType} then yields the short exact sequences
  \[
  0 \longrightarrow V_R[-d] \longrightarrow \type_{\mu,V_R}(\M) \longrightarrow \type_{\mu,V_R}(\cone(\phi)) \to 0
  \]
  and
  \[
  0 \longrightarrow \type_{\mu,\widetilde V}(\M) \longrightarrow \type_{\mu,\widetilde V}(\cone(\phi)) \to 0 \qquad (\widetilde V\neq V_R)
  \]
from which the theorem follows.  

The proof of \ref{itemMinImpliesRightMap} is similar.
\end{proof}

\section{Bounded $\L$-modules are mixed}
\label{sectBoundedLmodulesMixed}
In this section $\eta$ is a middle weight profile and $W\subseteq \Xhat$ is an admissible subset with unique maximal stratum $X_S$.

\begin{defn}
  \label{defnMixed}
  An $\L$-module $\M$ is \emph{$\mu$-mixed} if two conditions hold.  First there exists
  \begin{enumerate}[label=(\alph*),leftmargin=0em, labelsep=.5em, itemindent=2em]
  \item $\L$-modules $0=\M_0$, $\M_{1},\dots,\M_N=\M$,
  \item modules $V_1,V_{2},\dots,V_N$ where $V_i\in\IrrRep(L_{P_i})$ for some $P_i\in\Pl(W)$, and
  \item degrees $d_1,d_{2},\dots,d_N$.
  \end{enumerate}
Second the $\L$-module $\M_{i-1}=\cone(\phi_i)$ where $\phi_i\colon \ihat_{P_i!}\W^\mu\C(V_i)[-d_i]\to \M_i$ for all $i=1,2,\dots,N$.

  An $\L$-module $\M$ is \emph{$\nu$-mixed}
if similar data exists but for all $i$, $\M_{i-1}=\cone(\psi_i)[-1]$ where $\psi_i\colon \M_i \to \ihat_{P_i*}\W^\nu\C(V_i)[-d_i]$.
\end{defn}

\begin{thm}
  \label{thmBoundedLModulesAreMixed}
  A bounded $\L$-module $\M\in \K^b(\L_W)$ is $\mu$-mixed and $\nu$-mixed.
\end{thm}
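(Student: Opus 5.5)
We argue by induction on the total size of the $\eta$-type. For a bounded $\L$-module $\M$ set
\[
t_\eta(\M)=\sum_{V\in\etawmS(\M)}\ \sum_{d}\ \dim_{V}\type_{\eta,V}^d(\M),
\]
where $\dim_V$ counts the multiplicity of $V$ in the $V$-isotypic component. The first task is to see that $t_\eta(\M)$ is finite. Since $\M$ is bounded and $\Pl(W)$ is finite, each local cohomology $H(i_P^*\ihat_Q^!\M)$ is a bounded graded regular $L_P$-module; here we use Kostant's theorem, which keeps $H(\n_P^R;E_R)$ finite dimensional. Hence only finitely many $V$ occur, each with finite multiplicity. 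In particular $\etawmS(\M)$ is a finite set, so it has maximal elements for $\LprecLeft$ and minimal elements for $\LprecRight$; these are partial orders by the lemma in \S\ref{ssectNewPartialOrder}.

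For $\eta=\mu$ we proceed as follows. If $t_\mu(\M)>0$, choose $V_R$ maximal in $\muwmS(\M)$ for $\LprecLeft$ and a degree $d$ with $\type^d_{\mu,V_R}(\M)\neq0$. Theorem~\ref{thmRemoveExtremalMicroSupport}\ref{itemMaxImpliesLeftMap} then gives $\phi\colon\ihat_{R!}\W^\mu\C(V_R)[-d]\to\M$ with
\[
t_\mu(\cone(\phi))=t_\mu(\M)-1 .
\]
The cone is again bounded, since $\W^\mu\C(V_R)$ is bounded. Set $\M_N=\M$ and $\M_{N-1}=\cone(\phi)$, and iterate. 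The process stops at some $\M_0$ with $t_\mu(\M_0)=0$, that is, $\muwmS(\M_0)=\emptyset$.

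The $\nu$ case runs the same way, using minimal elements and Theorem~\ref{thmRemoveExtremalMicroSupport}\ref{itemMinImpliesRightMap}. It produces $\psi\colon\M_i\to\ihat_{R*}\W^\nu\C(V_R)[-d]$, and we set $\M_{i-1}=\cone(\psi)[-1]$. The shift does not change $t_\nu$, because the type of $\M[-1]$ is just the type of $\M$ regraded.

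The main obstacle is the terminal step: we must show that an $\L$-module $\M_0$ with $\etawmS(\M_0)=\emptyset$ is $0$ in $\K(\L_W)$. By Proposition~\ref{propQuasiIsomorphismIsHomotopyIsomorphism} it suffices to show that $H(i_P^*\M_0)=0$ for every $P$. We argue by contradiction. Suppose some $H^d(i_P^*\M_0)_{V_P}\neq0$. Viewing $\M_0=\ihat_S^*\M_0$, Lemma~\ref{lemMicrosupportOfPullback}\ref{itemMicrosupportOfPullback} applies to $V_P$, provided $V_P$ lies in $\wmS(\M_0)$ with $W=\Xhat_S\cap W$. It produces an element of $\nuwmS(\M_0)$, which contradicts emptiness in the $\nu$ case.

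To get $V_P\in\wmS(\M_0)$ from the nonvanishing of $H(i_P^*\M_0)_{V_P}$, I would choose $P$ maximal among the strata carrying nonzero local cohomology with the relevant isotypic component. I would then compare $H(i_P^*\M_0)$ with $\type_{Q,V_P}$ through the long exact sequence \eqref{eqnCompareLocalCohomology} and its Mayer--Vietoris spectral sequence; this is the same mechanism as in Lemma~\ref{lemBased}. Maximality of $P$ kills the link terms, so the comparison shows $V_P\in\wmS(\M_0)$.

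For the $\mu$ case I would use Lemma~\ref{lemMicrosupportOfPullback}\ref{itemMicrosupportOfProperPullback} together with \cite[Prop.~9.2]{refnSaperLModules}, as in its proof, to land in $\muwmS(\M_0)\cap\nuwmS(\M_0)$. That result says nonempty weak micro-support forces both $\muwmS$ and $\nuwmS$ to be nonempty. Granting this, $\M_0\simeq0$, and the sequence $\M_N,\dots,\M_0$ exhibits $\M$ as $\mu$-mixed. Reversing the order of the steps in the $\nu$ case, together with the shifts, matches Definition~\ref{defnMixed}.
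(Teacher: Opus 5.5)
Your proposal is correct and is essentially the paper's proof. It inducts on the size of $\type_\eta(\M)$, removes a $\LprecLeft$-maximal (resp.\ $\LprecRight$-minimal) element with Theorem~\ref{thmRemoveExtremalMicroSupport}, and handles the terminal case as Lemma~\ref{lemEmptySS} does, via Proposition~\ref{propQuasiIsomorphismIsHomotopyIsomorphism}. Your choice of $P$ maximal with $H(i_P^*\M_0)\neq0$ is a worthwhile refinement: with $Q'=S$ in \eqref{eqnCompareLocalCohomology} the Mayer--Vietoris terms vanish, so $\type_{Q,V_P}(\M_0)\cong H(i_P^*\M_0)_{V_P}\neq0$ for every $Q\in[P,S]$. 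This puts $V_P$ directly in $\muwmS(\M_0)\cap\nuwmS(\M_0)$, makes the appeal to Lemma~\ref{lemMicrosupportOfPullback} unnecessary, and supplies the justification the paper omits for its claim $V\in\wmS(\ihat_P^!\M)$, which needs such a choice of $P$.
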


Before proving the theorem we need two simple lemmas.

\begin{lem}\label{lemEmptySS}
If $\etawmS(\M)=\emptyset$ then $\M=0$ in the homotopy category.
\end{lem}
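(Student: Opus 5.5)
By Proposition~\ref{propQuasiIsomorphismIsHomotopyIsomorphism} it suffices to show that $H(i_P^!\M)=0$ for every $P\in\Pl(W)$: the zero morphism $\M\to0$ then induces isomorphisms on local cohomology. I would actually show the slightly stronger statement that $H(i_P^!\M)_{V_P}=0$ for every $V_P\in\IrrRep(L_P)$, arguing by contradiction.

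Suppose $H^d(i_P^!\M)_{V_P}\neq0$ for some $P$, $V_P$ and $d$. Then $V_P$ lies in $\wmS(\ihat_P^!\M)$. Indeed, on the single stratum $\Xhat_P\cap W$ one takes $Q=P$, and $P$ is automatically in the range $[Q^P_{V_P},Q^{\prime P}_{V_P}]$, since the only parabolic in $[P,P]$ is $P$ itself. So
\[
H^d(i_P^*\ihat_P^!\ihat_P^!\M)_{V_P}=H^d(i_P^!\M)_{V_P}\neq0 .
\]
Now Lemma~\ref{lemMicrosupportOfPullback}\ref{itemMicrosupportOfProperPullback}, applied with $R=P$, produces $V_{\widetilde P}\in\muwmS(\M)$ with $\type_{\mu,V_{\widetilde P}}(\M)\neq0$. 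Symmetrically, $H^d(i_P^*\M)_{V_P}\neq0$ gives $V_P\in\wmS(\ihat_P^*\M)$, and Lemma~\ref{lemMicrosupportOfPullback}\ref{itemMicrosupportOfPullback} produces an element of $\nuwmS(\M)$.

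Hence if $\muwmS(\M)=\emptyset$, every $H(i_P^!\M)$ vanishes, and if $\nuwmS(\M)=\emptyset$, every $H(i_P^*\M)$ vanishes. In the second case I also need $H(i_P^!\M)=0$. For that I would induct downward from the maximal stratum using the triangle obtained by applying $i_P^*$ to \eqref{eqnFirstDistinguishedTriangle}. Alternatively, and more simply, Proposition~\ref{propQuasiIsomorphismIsHomotopyIsomorphism} can be reread with $i_P^*$ in place of $i_P^!$. Its inductive proof uses only the triangles \eqref{eqnFirstDistinguishedTriangle} and \eqref{eqnsecondDistinguishedTriangle} and works for either kind of local cohomology: vanishing of all $i_P^*\M$ forces $j_{P*}j_P^*\M\simeq0$ by induction, so $i_P^!\M\simeq i_P^*\M\simeq0$, and the triangle gives $\M\simeq0$. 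In either case, for $\eta=\mu$ or $\nu$, we conclude $\M=0$ in $\K(\L_W)$.

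The only delicate point is checking that Lemma~\ref{lemMicrosupportOfPullback} really applies in the degenerate case $R=P$. This requires the hypothesis ``$Q\in[Q^R_{V_P},Q^{\prime R}_{V_P}]$'' to be satisfied trivially there, and it is, because $\Xhat_P\cap W$ has unique maximal stratum $X_P$. I do not expect any other obstacle.
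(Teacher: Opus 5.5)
Your proof is correct and takes the same route as the paper: Proposition~\ref{propQuasiIsomorphismIsHomotopyIsomorphism} produces nonzero local cohomology, and Lemma~\ref{lemMicrosupportOfPullback} with $R=P$ then produces an element of $\etawmS(\M)$. Your split into cases (using $H(i_P^!\M)$ for $\mu$ and $H(i_P^*\M)$ for $\nu$) is actually more careful than the paper, which takes one $V$ with $H(i_P^*\M)_V\neq0$ and asserts $V\in\wmS(\ihat_P^!\M)$, although that membership needs $H(i_P^!\M)_V\neq0$; one small correction is that the proposition, as proved in the paper, is phrased with $i_P^*$, so your $\nu$ case applies it literally and your $\mu$ case uses the easy $i_P^!$ variant via \eqref{eqnFirstDistinguishedTriangle}.
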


\begin{proof}
Assume $\M\neq0$ in the homotopy category.  By Proposition ~\ref{propQuasiIsomorphismIsHomotopyIsomorphism} there exists an $L_P$-module $V\in\IrrRep(\L_W)$ such that $H(i_P^*\M)_{V}\neq0$.  Then $V$ belongs to both $\wmS(\ihat_P^!\M)$ and $\wmS(\ihat_P^*\M)$ so by Lemma ~\ref{lemMicrosupportOfPullback} there exists an element of $\etawmS(\M)$, a contradiction.
\end{proof}

\begin{lem}\label{lemBoundedMeansWmSFinite}
  Assume $\M$ is bounded.  Then $\wmS(\M)$ is finite.  Furthermore if $V\in\wmS(\M)$ then $\type_{Q,V}(\M)$ is finite-dimensional for $Q\in[Q_V^W,Q_V^{\prime W}]$.
\end{lem}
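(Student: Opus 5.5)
The plan is to reduce both claims to two finiteness facts about the objects entering the definition of $\type_{Q,V}(\M)$.

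\textbf{Finite-dimensionality of the types.} Since $\M$ is bounded, each $E_P$ is an object of $\Graded^b(L_P)$. In particular $\bigoplus_i E_P^i$ is a finite-dimensional regular representation, and $\Pl(W)$ is finite. For $P\le Q$ the complex computing $H(i_P^*\ihat_Q^!\M)$ has underlying graded module
\[
\bigoplus_{P\le R\le Q} H(\n_P^R;E_R).
\]
Here $\ihat_Q^!$ simply restricts the data to indices in $\Xhat_Q\cap W$, and $i_P^*$ is given by the formula in \S\ref{ssectFunctors}. Each $H(\n_P^R;E_R)$ is finite-dimensional and concentrated in a bounded range of degrees, because $\n_P^R$ is finite-dimensional and Kostant's theorem \eqref{eqnKostant} shows the cohomology lives in degrees $0$ through $\dim\n_P^R$. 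So the complex is bounded and finite-dimensional in total. Its cohomology $H(i_P^*\ihat_Q^!\M)$ is therefore a finite-dimensional regular $L_P$-module, and every isotypic component $\type_{Q,V}(\M)$ is finite-dimensional. This gives the second assertion.

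\textbf{Finiteness of $\wmS(\M)$.} Suppose $V\in\wmS(\M)\cap\IrrRep(L_P)$. Then $\type_{Q,V}(\M)=H(i_P^*\ihat_Q^!\M)_V\neq0$ for some $Q\in[Q_V^W,Q_V^{\prime W}]$. Hence $V$ occurs as an irreducible constituent of the finite-dimensional $L_P$-module $\bigoplus_{P\le Q\le S}H(i_P^*\ihat_Q^!\M)$. Such a module has only finitely many isomorphism classes of irreducible constituents. Taking the union over the finitely many $P\in\Pl(W)$ and the finitely many $Q\in[P,S]$ gives a finite set containing $\wmS(\M)$.

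\textbf{Expected obstacle.} The only point needing care is identifying the complex that computes $H(i_P^*\ihat_Q^!\M)$ and checking that it is built from finitely many finite-dimensional pieces. This is immediate from the explicit definitions of $k^!$ and $i_P^*$ in \S\ref{ssectFunctors}, together with the footnote on $\Graded^b(L)$. Everything else is bookkeeping.
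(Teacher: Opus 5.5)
Your proof is correct and follows the same route as the paper: boundedness makes each $\bigoplus_i E_P^i$ finite-dimensional, so only finitely many isotypic components can be nonzero. You also make explicit a step the paper leaves implicit, namely that $H(i_P^*\ihat_Q^!\M)$ is computed by the finite-dimensional complex $\bigoplus_{P\le R\le Q}H(\n_P^R;E_R)$; the Chevalley--Eilenberg complex already gives this, so Kostant's theorem is not needed.
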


\begin{proof}
  If $\M$ is bounded then for each $P\in\Pl(W)$ the direct sum $\bigoplus_i E_P^i$ is regular and hence finite-dimensional.  Thus it has only finitely many nonzero isotypical components.
\end{proof}

\begin{proof}[Proof of Theorem~ \textup{\ref{thmBoundedLModulesAreMixed}}]
  Since $\M$ is bounded, $\etawmS(\M)$ is finite and $\type_\eta(\M) \equiv \bigoplus_{V\in\etawmS(\M)}  \type_{\eta,V}(\M)$ is finite-dimensional as noted in Lemma~\ref{lemBoundedMeansWmSFinite}.  We use induction on $\dim \type_\eta(\M)$.  If the dimension is $0$ then $\etawmS(\M)=\emptyset$ and Lemma~\ref{lemEmptySS} finishes the proof.  If the dimension is $>0$ then $\etawmS(\M)\neq\emptyset$.  Choose an $L_R$-module $V\in \etawmS(\M)$ to be maximal with respect to $\LprecLeft$ (when $\eta=\mu$) or minimal with respect to $\LprecRight$ (when $\eta= \nu$).  In the $\eta=\mu$ case Theorem ~\ref{thmRemoveExtremalMicroSupport}\ref{itemMaxImpliesLeftMap} shows there exists a morphism $\phi\colon \ihat_{R!}\W^\mu\C(V_R)[-d]\to \M$ such that the multiplicity of $V_R$ in $\type_\mu(\cone(\phi))$ is one less than in $\type_\mu(\M)$ and the multiplicities of $\widetilde V\neq V_R$ are unchanged.  Set $\widetilde M= \cone(\phi)$.  Then $\dim \type_\eta(\widetilde \M) < \dim \type_\eta(\M)$ and we are done by induction.   In the $\eta=\nu$ case the same argument holds except with a morphism $\psi\colon \M \to \ihat_{R*}\W^\nu\C(V_R)[-d]$.
\end{proof}

From the proof above we obtain the
\begin{cor}\label{corDataAboutBuildingBlocks}
The $\eta$-mixed data of $\M$ contains all $V_i\in\etawmS(\M)$ with multiplicity equal to the multiplicity of $V_i$ in  $\type_{\eta,V_i}(\M)$.  As $i$ goes from $1$ to $N$, the $V_i$ are nondecreasing with respect to $\LprecLeft$ \textup(for $\eta=\mu$\textup) or nonincreasing with respect to $\LprecRight$ \textup(for $\eta=\nu$\textup).
\end{cor}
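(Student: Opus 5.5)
The corollary will be read off from the inductive construction in the proof of Theorem~\ref{thmBoundedLModulesAreMixed}, with the data listed in the order the induction produces it. I would carry out the $\eta=\mu$ case and then indicate the symmetric changes for $\nu$.

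\emph{Multiplicities.} Run the induction with $\M_N=\M$. At step $i$ (going downward from $N$), pick $V_i\in\muwmS(\M_i)$ maximal for $\LprecLeft$ and set $\M_{i-1}=\cone(\phi_i)$, where $\phi_i$ comes from Theorem~\ref{thmRemoveExtremalMicroSupport}\ref{itemMaxImpliesLeftMap}. That theorem gives $\type_{\mu,V_i}(\M_{i-1})=\type_{\mu,V_i}(\M_i)/V_i[-d_i]$ and leaves every $\type_{\mu,\widetilde V}$ with $\widetilde V\neq V_i$ unchanged. So each step lowers the total multiplicity of exactly one irreducible $V_i$ by one. The process ends at $\M_0=0$, where all types vanish by Lemma~\ref{lemEmptySS}. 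Summing over the steps, each $V\in\muwmS(\M)$ must appear among the $V_i$ exactly as many times as $V$ occurs, counted over all degrees, in $\type_{\mu,V}(\M)$. Conversely, nothing outside $\muwmS(\M)$ ever enters: $\muwmS(\M_i)\subseteq\muwmS(\M)$ at every stage, since the types of $\widetilde V\ne V_i$ are unchanged and that of $V_i$ only shrinks.

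\emph{Monotonicity.} I need $V_{i-1}\not\mathrel{>_\mu} V_i$ in the sense relevant for ``nondecreasing'' as $i$ increases; the precise claim is that $V_{i-1}\LprecLeft V_i$ or the two are incomparable. The argument uses that $\muwmS(\M_{i-1})\subseteq\muwmS(\M_i)$. The element $V_i$ was chosen maximal in $\muwmS(\M_i)$, and $V_{i-1}$ lies in the smaller set $\muwmS(\M_{i-1})$. Hence $V_{i-1}$ cannot be strictly larger than $V_i$ under $\LprecLeft$, so $V_i\not\LprecLeft V_{i-1}$ unless the two are equal.

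This is the one step that needs care. A true linear ``nondecreasing'' statement only holds relative to a compatible linear extension, so I would phrase it as ``the sequence can be chosen compatible with a linear extension of $\LprecLeft$''. The cleanest way to arrange this is to fix, at the outset, a linear extension of $\LprecLeft$ on the finite set $\muwmS(\M)$, which is finite by Lemma~\ref{lemBoundedMeansWmSFinite}. At each step, choose $V_i$ to be the largest element of $\muwmS(\M_i)$ in that linear order. This element is automatically $\LprecLeft$-maximal in $\muwmS(\M_i)$, so Theorem~\ref{thmRemoveExtremalMicroSupport} still applies. Because the sets $\muwmS(\M_i)$ shrink as $i$ decreases, the chosen $V_i$ are then monotone in the linear order, and hence nondecreasing with respect to $\LprecLeft$ in the stated sense.

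The $\nu$ case is identical, with three changes. Use Theorem~\ref{thmRemoveExtremalMicroSupport}\ref{itemMinImpliesRightMap} in place of part~\ref{itemMaxImpliesLeftMap}. Take minimal elements for $\LprecRight$ instead of maximal ones, again selected via a fixed linear extension. Set $\M_{i-1}=\cone(\psi_i)[-1]$; the shift does not affect the multiplicity count. These changes give the nonincreasing conclusion.
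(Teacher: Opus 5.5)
Your proposal is correct and takes the paper's route: the paper also reads the corollary off the inductive proof of Theorem~\ref{thmBoundedLModulesAreMixed}, where each application of Theorem~\ref{thmRemoveExtremalMicroSupport} lowers the multiplicity of exactly one $V_i$ by one and leaves all other types unchanged, so the weak micro-supports only shrink and each newly chosen extremal element cannot strictly exceed (resp.\ fall below) the ones chosen before it. Fixing a linear extension of $\LprecLeft$ (resp.\ $\LprecRight$) on the finite set $\etawmS(\M)$ is a harmless refinement the paper leaves implicit: it makes ``nondecreasing'' literally true, whereas an arbitrary choice of maximal elements only gives the pairwise property that $V_j\LprecLeft V_i$ with $i<j$ forces $V_i=V_j$ (and dually for $\nu$).
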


\section{Intersection cohomology equals weighted cohomology}
We recall the construction of the intersection cohomology $\L$-module $\I_p\C(E)$ for $E\in\IrrRep(G)$ \cite[\S5]{refnSaperLModules}.  Its realization is the Deligne sheaf for Goresky and MacPherson's intersection cohomology \cite[\S3]{refnGoreskyMacPhersonIHTwo}, \cite[V, \S2]{refnBorelIntersectionCohomology}, a topological invariant.  (We use cohomological indexing as opposed to the perverse indexing from \cite{refnBeilinsonBernsteinDeligne}.)

For a middle perversity $m$ or $n$ we prove (under a condition on the $\QQ$-root system) that if the coefficient system $\EE$ arises from a conjugate-self contragradient $G$-module then global intersection cohomology for $m$ and $n$ is isomorphic to global weighted cohomology for weight profiles $\mu$ and $\nu$ respectively.

\subsection{Intersection cohomology as an $\L$-module}
Given an $\L$-module $\M$ its \emph{degree truncation} along the $X_Q$ stratum is the mapping cone
\[
\tau_Q^{\le n}\M = \cone(\M\to i_{Q*}\tau^{>n}i_Q^*\M)[-1]
\]
where $\tau^{>n}$ is the usual truncation of a complex.

Let $p\colon\{2,\dots,\dim X\}\to \ZZ$ be a classical perversity.  The $\L$-module $\I_p\C(E)$ is
\[
\I_p\C(E) =  \t_{Q_1}^{\le p(\codim_{\Xhat} X_{Q_1})}j_{Q_1*}\cdots\t_{Q_N}^{\le p(\codim_{\Xhat}X_{Q_N})}j_{Q_N*}i_{G*}E
\]
where $Q_1,\dots,Q_N$ is an enumeration of $\Pl(\Xhat)\setminus\{G\}$  such that if $Q_i<Q_j$ then $i<j$.  The realization of $\I_p\C(E)$ is isomorphic in the derived category to Deligne's intersection cohomology sheaf $\I_p\C_p(\Xhat;\EE)$.

\subsection{The micro-support of intersection cohomology}
The lower and upper middle perversities are defined by $m(k)=\lfloor \frac{k-2}2 \rfloor$ and $n(k)=\lfloor\frac{k-1}2 \rfloor$.  For a middle peversity $p$ the micro-support of the intersection cohomology $\L$-module was calculated in \cite[Theorem~ 17.1, Corollary 17.2]{refnSaperLModules} (see also \cite[Lemma ~8.8]{refnSaperLModules}):

\begin{prop}
  \label{propMicroSupportIntersectionCohomology}
  Let $E\in \IrrRep(G)$ satisfy the conjugate self-contragradient condition $(E|_{M_G})^*\cong \overline{E|_{M_G}}$.  Assume the $\QQ$-root system of $G$ does not involve types $D$, $E$, and $F$.  Let $p$ be a middle perversity.  Then $V_P\in  \mS(\I^p\C(E))$ if and only if $V_P \preccurlyeq_0 E$ and $V_P|_{M_P}$ is conjugate self-contragradient.  Furthermore\textup:
  \begin{enumerate}[leftmargin=0em, labelsep=.5em, itemindent=2em]
  \item For such $V_P$,
    \begin{equation}\label{eqnmSForHmC}
    H(i_P^*\ihat_Q^!(\I^m\C(E))_{V_P} = V_P[-\frac12\dim \n_P-\#\D_P]
    \end{equation}
    when $Q=Q_{V_P}=P$ and is zero otherwise.
  \item For such $V_P$,
    \begin{equation}\label{eqnmSForHnC}
    H(i_P^*\ihat_Q^!(\I^n\C(E))_{V_P} = V_P[-\frac12\dim \n_P]
    \end{equation}
    when $Q=Q_{V_P}'=G$ and is zero otherwise.
  \end{enumerate}
\end{prop}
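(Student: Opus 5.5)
The plan is not to recompute anything from scratch. I would deduce the statement from \cite[Theorem~17.1, Corollary~17.2]{refnSaperLModules}, which is where the hypothesis excluding $\QQ$-types $D$, $E$, $F$ enters. The main work is to match the notation there with the present notation: $Q_V$, $Q'_V$, the order $\preccurlyeq_0$, and the degree shifts. The steps below are the consistency checks I would make to confirm that the translation is correct.

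\textbf{Step 1: when $V_P\preccurlyeq_0 E$.} Suppose $V_P\preccurlyeq_0 E$, say $V_P=H^{\l(w)}(\n_P;E)_w$ with $w\in W_P^G$. Both conditions \ref{itemRootCone} hold, so $(\xi_{V_P}+\hsr)|_{\sa_P}$ lies in $\lsp+\sa_P^{*}\cap-\lsp+\sa_P^{*}=0$.
\begin{itemize}
\item Hence $\langle\xi_{V_P}+\hsr,\al\spcheck\rangle=0$ for every $\al\in\D_P$. By the definitions in \S\ref{ssectMicroSupport} this gives $Q_{V_P}=P$ and $Q'_{V_P}=G$, which explains the two values of $Q$ in the statement.
\item Next, I would invoke \cite[Lemma~8.8]{refnSaperLModules}. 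From $w(\lambda+\hsr)|_{\sa_P}=0$ it gives $\l(w)=\frac12\dim\n_P$: the roots of $\n_P$ pair up under $w$, half of them inverted.
\item Consequently $V_P$ appears in $H(\n_P;E)$ exactly in degree $\frac12\dim\n_P$, with multiplicity one by Kostant's theorem \eqref{eqnKostant}.
\end{itemize}

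\textbf{Step 2: the two types.} I would compute the types through the degree truncations defining $\I_p\C(E)$, in parallel with the weighted-cohomology computation of Proposition~\ref{propMicroSupportWeightedCohomology}.

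For $Q=G$, the type is $H(i_P^*\I_p\C(E))_{V_P}$. Under the iterated truncation this is, roughly, $\tau^{\le p(c)}$ of $H(\n_P;E)$, where $c=\codim X_P=\dim\n_P+\#\D_P$. For the upper middle perversity $n$ one has $n(c)\ge\frac12\dim\n_P$, so the $V_P$ summand survives. This gives \eqref{eqnmSForHnC}.

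For $Q=P$, the type is $H(i_P^!\I_m\C(E))_{V_P}$. This is the link cohomology above the truncation degree, shifted by the $\#\D_P$ coming from the link dimension. For $m$ the cut satisfies $m(c)<\frac12\dim\n_P$, so $V_P$ lands in supports, in degree $\frac12\dim\n_P+\#\D_P$. This gives \eqref{eqnmSForHmC}.

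\textbf{Step 3: the converse, and the main obstacle.} The hard part is the converse: showing that no $V_P$ outside the $\preccurlyeq_0$ set contributes. The naive truncation description above is only heuristic. The iterated truncations along intermediate strata interact, and the actual local cohomology of $\I_p\C(E)$ is governed by the combinatorics of \cite[\S17]{refnSaperLModules}. That combinatorics is exactly where the root-system restriction is used.

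I would not re-derive this. I would cite \cite[Theorem~17.1]{refnSaperLModules}, which states that $\mS(\I_p\C(E))$ is contained in $\{V_P\preccurlyeq_0 E\}$ and gives the types, and \cite[Corollary~17.2]{refnSaperLModules} for the strong micro-support. The conjugate self-contragredient condition on $V_P|_{M_P}$ is then simply inherited from the definition of $\mS$ together with the hypothesis on $E$.
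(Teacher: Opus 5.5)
Your proposal takes the same route as the paper. The paper gives no argument for this proposition; it quotes it from \cite[Theorem~17.1, Corollary~17.2]{refnSaperLModules} (see also \cite[Lemma~8.8]{refnSaperLModules}), and that citation is also the load-bearing step of your proposal. However, two of your ``consistency checks'' are false as stated and should be dropped or corrected rather than offered as verifications.

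\textbf{Step 1.} The equality $\l(w)=\frac12\dim\n_P$ does not follow from $w(\lambda+\hsr)|_{\sa_P^G}=0$ alone. (It is $\sa_P^G$, not $\sa_P$.) Saying ``the roots pair up'' is not an argument.

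Here is a counterexample. Take $G=R_{K/\QQ}SL_3$ with $K$ imaginary quadratic and $P$ maximal of type $(2,1)$, so $\dim\n_P=4$. Let $x^{(1)},x^{(2)}$ be the two $SL_3$-components of $\lambda+\hsr$ and set $y^{(i)}=w_ix^{(i)}$. Then $w(\lambda+\hsr)|_{\sa_P^G}$ is proportional to $y^{(1)}_3+y^{(2)}_3$. Take $x^{(1)}=(2,1,-3)$, $x^{(2)}=(4,3,-7)$, $w_1=1$, and $w_2$ the reflection swapping the last two coordinates. This gives $V_P\preccurlyeq_0E$ with $\l(w)=1\neq 2$.

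What is true, and all you need, uses the conjugate self-contragredient hypotheses:
\begin{enumerate}
\item Poincar\'e duality for $\n_P$-cohomology, together with $E^*\cong\overline E$ on $M_G$, puts a twist of $\overline{V_P}^{\,*}$ in degree $\dim\n_P-\l(w)$.
\item When $V_P|_{M_P}$ is conjugate self-contragredient and $(\xi_{V_P}+\hsr)|_{\sa_P^G}=0$, this twist is $V_P$ itself.
\item Multiplicity one in Kostant's theorem then forces $\l(w)=\dim\n_P-\l(w)$.
\end{enumerate}

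\textbf{Step 2.} Your truncation heuristic treats the link cohomology at $X_P$ as $H(\n_P;E)$. This ignores the truncations along the intermediate strata $X_{P'}$ with $P<P'<G$. With that input your inequality for $m$ already fails once $\#\D_P\ge2$. Since $\dim\n_P=2\l(w)$ and $c=\dim\n_P+\#\D_P$,
\[
m(c)=\tfrac12\dim\n_P+\lfloor(\#\D_P-2)/2\rfloor\ge\tfrac12\dim\n_P .
\]
The heuristic would then leave $V_P$ in $H(i_P^*\I_m\C(E))$, giving a nonzero $G$-type and contradicting (i).

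The shift is also wrong. The triangle $i_P^!\to i_P^*\to i_P^*j_{P*}j_P^*\to$ shifts link cohomology by $1$, not by $\#\D_P$. Both the placement of $V_P$ in the supports and the shift by $\#\D_P$ come from the intermediate truncations. That is exactly the computation in \cite[\S17]{refnSaperLModules}, where the restriction on the $\QQ$-root system is used.

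So both type formulas, not only the converse, must be taken from the citation, as the paper does.
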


Since in the above setting $Q_{V_P}=Q_{V_P}'$ if and only if $P=G$ we have the
\begin{cor}
  \label{corMSofIpC}
For middle perversity intersection cohomlogy,
  \[
  \mS_\mu(\I^m\C(E))= \{E\} \quad\text{and}\quad  \mS_\nu(\I^n\C(E))= \{E\}\ .
  \]
Furthermore $\type_{\mu,E}(\I^m\C(E)) = E$ and $\type_{\nu,E}(\I^n\C(E)) = E$.
\end{cor}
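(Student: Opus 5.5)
The corollary follows directly from Proposition~\ref{propMicroSupportIntersectionCohomology} once we read off which $V_P$ contribute to the $\eta$-types. The plan is to compute $\mS_\mu(\I^m\C(E))$ from the definition in \S\ref{ssectMicroSupport}: $V_P\in\mS_\mu$ means that $V_P|_{M_P}$ is conjugate self-contragradient and $\type_{\mu,V_P}(\I^m\C(E)) = H(i_P^*\ihat_{Q'}^!\I^m\C(E))_{V_P}\neq 0$ with $Q'=Q_{V_P}^{\prime}$ (here $W=\Xhat$, so $S=G$). By the proposition, any such $V_P$ must satisfy $V_P\preccurlyeq_0 E$. Moreover, the local cohomology \eqref{eqnmSForHmC} is nonzero only for $Q=Q_{V_P}=P$. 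So nonvanishing of the $\mu$-type forces $Q_{V_P}'=Q_{V_P}=P$.

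The key step is to show that $Q_{V_P}=Q'_{V_P}$ forces $P=G$, as noted before the corollary. I would argue as in the proof of Corollary~\ref{corEtamSofWetaC}. Since $V_P\preccurlyeq_0 E$, we have $(\xi_{V_P}+\hsr)|_{\sa_P^G}\in \lsp+\sa_P^{G*}\cap -\lsp+\sa_P^{G*}=\{0\}$. Hence $\langle\xi_{V_P}+\hsr,\al\spcheck\rangle=0$ for every $\al\in\D_P$. Consequently $\D_P^{Q_{V_P}}=\emptyset$ and $\D_P^{Q'_{V_P}}=\D_P$, that is, $Q_{V_P}=P$ and $Q'_{V_P}=G$. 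Equality of the two therefore gives $P=G$. Then $V_P\preccurlyeq_0 E$ with $P=G$ means $V_G=H^0(\n_G^G;E)=E$. Since $E$ is assumed conjugate self-contragradient, $E$ does lie in $\mS_\mu$: taking $P=G$, the formula gives $\type_{\mu,E}(\I^m\C(E))=E[-0-0]=E$.

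The $\nu$ case is symmetric. $\type_{\nu,V_P}$ uses $Q=Q_{V_P}$, while \eqref{eqnmSForHnC} is nonzero only for $Q=Q'_{V_P}=G$. This again forces $Q_{V_P}=Q'_{V_P}$, hence $P=G$, $V_P=E$, and $\type_{\nu,E}(\I^n\C(E))=E[-\tfrac12\dim\n_G]=E$.

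The only point needing care is the identification $Q_{V_P}=P$, $Q'_{V_P}=G$ from $\preccurlyeq_0$, that is, the vanishing of a vector lying in both the root cone and its negative. This is immediate because $\lsp+\sa_P^{G*}$ is a pointed simplicial cone defined via the basis dual to $\Dhat_P^G$.
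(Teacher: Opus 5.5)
Your proof is correct and is essentially the paper's argument. The paper derives the corollary from Proposition~\ref{propMicroSupportIntersectionCohomology} via the one-line observation that, for $V_P\preccurlyeq_0 E$, one has $Q_{V_P}=Q'_{V_P}$ if and only if $P=G$; you fill in that observation as in the proof of Corollary~\ref{corEtamSofWetaC}, using that the root cone is pointed, and then read off $\type_{\mu,E}=\type_{\nu,E}=E$ at $P=G$.
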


\begin{rem*}
  See \cite[\S17]{refnSaperLModules} for the more complicated description of  $\mS(\I^p\C(E))$ without the assumption $(E|_{M_G})^*\cong \overline{E|_{M_G}}$.  Also we do not have a description in general of the weak micro-support for $\I^p\C(E)$; its elements are not determined simply by dropping the conjugate self-contragradient condition on $V_P|_{M_P}$ in the proposition.
\end{rem*}

\subsection{Isomorphism of intersection cohomology and weighted cohomology}

Let $I_p H(\Xhat;\EE)= H(\Xhat;\I_p\C(E))$ be the global intersection cohomology for perversity $p$.

\begin{thm}
  \label{thmIsomorphismIntersectionAndWeightedCohomology}
    Let $E\in \IrrRep(G)$ satisfy the conjugate self-contragradient condition $(E|_{M_G})^*\cong \overline{E|_{M_G}}$.  Assume the $\QQ$-root system of $G$ does not involve types $D$, $E$, and $F$.  Then
  \[
  W^\mu H(\Xhat;\EE) \cong I_mH(\Xhat;\EE)\quad \text{and} \quad
  W^\nu H(\Xhat;\EE) \cong I_nH(\Xhat;\EE)\ .
  \]
\end{thm}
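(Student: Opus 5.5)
We treat the case $\eta=\mu$, $p=m$; the case $\nu$, $n$ is symmetric. Apply Theorem~\ref{thmBoundedLModulesAreMixed} to the bounded $\L$-module $\M=\I_m\C(E)$ on $W=\Xhat$. This writes $\M$ as an iterated mapping cone of morphisms $\phi_i\colon \ihat_{P_i!}\W^\mu\C(V_i)[-d_i]\to\M_i$. By Corollary~\ref{corDataAboutBuildingBlocks}, the building blocks $V_i$ run over $\muwmS(\M)$, each with multiplicity equal to that of $V_i$ in $\type_{\mu,V_i}(\M)$.

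The building blocks split into two kinds. Blocks with $V_i|_{M_{P_i}}$ not conjugate self-contragredient have zero global cohomology by Corollary~\ref{corWeightedCohomologyVanishing}. Here we use that $H(\Xhat;\ihat_{P!}\N)=H(\Xhat_P;\N)$, since $\Sh$ commutes with the functors. The remaining blocks are indexed by $\mS_\mu(\M)$, which equals $\{E\}$ by Corollary~\ref{corMSofIpC}, with $\type_{\mu,E}(\M)=E$ in degree $0$. So exactly one block is $\W^\mu\C(E)$ on $\Xhat$ with $d=0$, occurring once; every other block has vanishing global cohomology.

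Now pass to global cohomology. The functor $H(\Xhat;\cdot)$ applied to a distinguished triangle in $\K(\L_W)$ gives a long exact sequence: since $\Sh$ is triangulated, this is the usual hypercohomology sequence, as the footnote in \S\ref{sectHomotopyCategory} indicates. Consider a step $\M_{i-1}=\cone(\phi_i)$ whose source has zero cohomology. The long exact sequence shows that $\M_i\to\M_{i-1}$ induces an isomorphism on global cohomology. Thus $H(\Xhat;\M)$ is isomorphic to $H(\Xhat;\M_{j})$ for the index $j$ at which $E$ occurs, and hence to $H(\Xhat;\M_{j-1})\oplus$-type data from the single $E$-step.

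A cleaner way to finish uses the triangle at step $j$:
\[\W^\mu\C(E)\xrightarrow{\phi_j}\M_j\to\M_{j-1}\to.\]
The lower modules $\M_{j-1},\dots,\M_0=0$ are built only from blocks with zero global cohomology. By induction from $\M_0=0$, each of them therefore has $H(\Xhat;\M_k)=0$ for $k<j$. Hence $\phi_j$ induces an isomorphism $W^\mu H(\Xhat;\EE)\cong H(\Xhat;\M_j)$. The higher steps $k>j$ also have cohomologically trivial sources, so $H(\Xhat;\M_j)\cong H(\Xhat;\M)=I_mH(\Xhat;\EE)$.

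The main obstacle is the bookkeeping in the identification of the building blocks. One must check that the multiplicity count of Corollary~\ref{corDataAboutBuildingBlocks} really isolates a single $E$-block. This means verifying that $\type_{\mu,E}(\M)$ is one copy of $E$ in degree $0$, which is Proposition~\ref{propMicroSupportIntersectionCohomology} with $P=G$. Where the $\QQ$-root system hypothesis is needed is precisely in invoking that proposition. One should also confirm that the conjugate self-contragredient condition on $E$ is what makes $E\in\mS_\mu$ rather than merely in the weak micro-support. Finally, for $\nu$ the morphisms go $\psi_i\colon\M_i\to\ihat_{P_i*}\W^\nu\C(V_i)[-d_i]$ with $\M_{i-1}=\cone(\psi_i)[-1]$, and the same long exact sequence argument applies.
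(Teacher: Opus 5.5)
Your proposal is correct and follows essentially the same route as the paper. You apply Theorem~\ref{thmBoundedLModulesAreMixed} to $\I_m\C(E)$, isolate the single block $\W^\mu\C(E)$ in degree $0$ via Corollaries~\ref{corDataAboutBuildingBlocks} and~\ref{corMSofIpC}, kill every other block's global cohomology with Corollary~\ref{corWeightedCohomologyVanishing}, and run the long exact sequences up from $\M_0=0$ and past the $E$-step. The muddled sentence about ``$\oplus$-type data'' should simply be dropped, since your ``cleaner'' finish is exactly the paper's argument.
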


\begin{proof}
We only give the proof for $\mu$.  By Theorem ~\ref{thmBoundedLModulesAreMixed} the $\L$-module $\I^m\C(E)$ is $\mu$-mixed.  Let $(\M_i)_{i=0}^N$ be the sequence of $\L$-modules from Definition ~\ref{defnMixed} with $\M_0=0$ and $\M_N=\I^m\C(E)$.  There are distinguished triangles $\to\W^\mu\C(V_i)[-d_i]\to \M_i \to \M_{i-1}\to$ where $V_i$ ranges over the elements of $\muwmS(\I^m\C(E))$.  By Corollary ~\ref{corDataAboutBuildingBlocks} and Corollary ~\ref{corMSofIpC}, there is a unique index $i_0$ with $V_{i_0}=E\in \mS_\mu(\I^m\C(E))$ and all other $V_i\in \muwmS(\I^m\C(E))\setminus \{E\}$.
By Corollary ~\ref{corWeightedCohomologyVanishing}  $W^\mu H(\Xhat_{P_i};\VV_i) = 0$ for all $i\neq i_0$.  The long exact sequences of the above distinguished triangles then imply $H(\Xhat;\M_i)=0$ for $i<i_0$, $H(\Xhat;\M_{i_0})\cong W^\mu H(\Xhat;\EE)$, and $H(\Xhat;\M_i) \cong H(\Xhat;\M_{i-1})$ for $i>i_0$.  The theorem follows since $H(\Xhat;\M_N)= I_mH(\Xhat;\EE)$.
\end{proof}

In particular the theorem shows that weighted cohomology is a topological invariant  in these cases.


\bibliographystyle{amsplain}
\bibliography{../references}
\end{document}